\newcommand{\ol}{\overline}
\newcommand{\be}{\begin{equation}}
\newcommand{\ee}{\end{equation}}
\newcommand{\tf}{\tilde{f}}
\newcommand{\tu}{\tilde{u}}
\newcommand{\lll}{\left}
\newcommand{\rrr}{\right}
\newcommand{\tgij}{\tilde{g}_{ij}}
\newcommand{\thij}{\tilde{h}_{ij}}
\newcommand{\tguij}{\tilde{g}^{ij}}
\newcommand{\tgukl}{\tilde{g}^{kl}}
\newcommand{\thh}{\tilde{h}}
\newcommand{\tg}{\tilde{g}}
\newcommand{\F}{(F-\sigma)}
\newcommand{\tna}{\tilde{\nabla}}
\newcommand{\na}{\nabla}
\newcommand{\ka}{\kappa}
\newtheorem{theorem}{Theorem}[section]
\newtheorem{lemma}[theorem]{Lemma}
\newtheorem{proposition}[theorem]{Proposition}
\newtheorem{corollary}[theorem]{Corollary}
\theoremstyle{definition}
\theoremstyle{remark}
\newtheorem{remark}[theorem]{Remark}
\numberwithin{equation}{section}
\begin{document}
\setlength{\baselineskip}{1.2\baselineskip}

\title[Curvature flow in hyperbolic space]
{Curvature flow of complete convex hypersurfaces in hyperbolic space}
%with asymptotic boundary at infinity}

\author{Ling Xiao}
%\address{Department of Mathematics\\Johns Hopkins University\\3400 North Charles Street\\Baltimore, MD 21218-2686\\USA}
%\email{lxiao@math.jhu.edu}

%\thanks{}

%    \subjclass is required.
%\subjclass[2000]{}
%    The 2010 edition of the Mathematics Subject Classification is
%    now available.  If you are citing a classification from the
%    new scheme, use the following input coding instead.
\subjclass[2010]{Primary 53C44; Secondary 35K20, 58J35.}

%\date{\today}

\begin{abstract}
We investigate the existence, convergence and uniqueness of modified general curvature flow ($\mathbf{MGCF}$) of convex hypersurfaces in hyperbolic space with a prescribed asymptotic boundary.
\end{abstract}

\maketitle

\section{Introduction} \label{Int}
In this paper, we continue our study of modified curvature flow problems in hyperbolic space. Consider a complete (locally strictly) convex hypersurface in $\mathbb{H}^{n+1}$ with a prescribed asymptotic boundary $\Gamma$ at infinity, whose principal curvature is greater than $\sigma$ (e.g in our earlier work \cite{LX10} section 8 we gave an example of such "good" initial surfaces.)
and is given by an embedding $\mathbf{X}(0): \Omega\rightarrow\mathbb{H}^{n+1}$, where $\Omega\subset\partial_{\infty}\mathbb{H}^{n+1}.$ We consider the evolution of such embedding to produce a family of embeddings $\mathbf{X}:\Omega\times[0,T)\rightarrow\mathbb{H}^{n+1}$ satisfying the following equations
\begin{equation}{\label{Int0}}
\left\{
\begin{aligned}
&\dot{\mathbf{X}}=(f(\ka[\Sigma(t)])-\sigma)\nu_H&\,\,(x,t)\in\Omega\times[0,T),\\
&\mathbf{X}(0)=\Sigma_0&\,\,(x,t)\in\partial\Omega\times\{0\},\\
&\mathbf{X}=\Gamma&\,\,(x,t)\in\partial\Omega\times[0,T),
\end{aligned}
\right.
\end{equation}
where $\ka[\Sigma(t)]=(\ka_1,\cdots,\ka_n)$ denotes the hyperbolic principal curvatures of $\Sigma(t),$ $\sigma\in (0,1)$ is a constant and $\nu_{H}$ denotes the outward unit normal of $\Sigma(t)$ with respect to the hyperbolic metric.

In this paper, we shall use the half-space model,
\[\mathbb{H}^{n+1}=\{(x,x_{n+1})\in\mathbb{R}^{n+1}:x_{n+1}>0\}\]
equipped with the hyperbolic metric
\be\label{Int4}
ds^2=\frac{\sum_{i=1}^{n+1}dx_i^2}{x_{n+1}^2}.
\ee
One identifies the hyperplane $\{x_{n+1}=0\}=\mathbb{R}^n\times\{0\}\subset\mathbb{R}^{n+1}$ as infinity of
$\mathbb{H}^{n+1},$ denoted by $\partial_{\infty}\mathbb{H}^{n+1}.$ For convenience we say $\Sigma$ has compact asymptotic boundary if $\partial\Sigma\subset\partial_{\infty}\mathbb{H}^{n+1}$ is compact with respect to the Euclidean metric in $\mathbb{R}^n.$

The function $f$ is assumed to satisfy the following fundamental structure conditions:
\be\label{Int5}
f_i(\lambda)\equiv\frac{\partial f(\lambda)}{\partial\lambda_i}>0\;\;\mbox{in $K$},\;\;1\leq i\leq n,
\ee
\be\label{Int6}
\mbox{$f$ is a concave function in $K$},
\ee
and
\be\label{Int7}
f>0\;\;\mbox{in $K$},\;\;f=0\;\;\mbox{on $\partial K$},
\ee
 where $K\subset\mathbb{R}^n$ is an open symmetric convex cone defined as following
 \be\label{Int8}
 K:=K^+_n:=\{\lambda\in\mathbb{R}^n:\mbox{each component $\lambda_i>0$}\}.
 \ee
 In addition, we shall assume that $f$ is normalized
 \be\label{Int9}
 f(1,\cdots,1)=1
 \ee
 and satisfies more technical assumptions
 \be\label{Int10}
 \mbox{$f$ is homogeneous of degree one}.
 \ee
 Moreover,
 \be\label{Int11}
 \lim_{R\rightarrow +\infty}f(\lambda_1,\cdots,\lambda_{n-1},\lambda_{n}+R)\geq 1+\epsilon_0\;\;\;\mbox{uniformly in $B_{\delta_0}(\mathbf{1})$}
 \ee
 for some fixed $\epsilon_0>0$ and $\delta_0>0,$ where $B_{\delta_0}(\mathbf{1})$ is the ball of radius $\delta_0$ centered at $\mathbf{1}=(1,\cdots,1)\in\mathbb{R}^n.$

 As shown in \cite{GS10}, an example of the function satisfies all assumptions above is given by
 $f=(H_n/H_l)^{\frac{1}{n-l}},\;\;\mbox{$0\leq l<n$},$ defined in $K,$ where $H_l$ is the normalized $l-th$ elementary symmetric polynomial. (e.g., $H_0=1,$ $H_1=H,$ $H_n=K$ the extrinsic Gauss curvature.) 
 
 Since $f$ is symmetric, by (\ref{Int6}), (\ref{Int9}) and (\ref{Int10}) we have
 \be\label{Int12}
 f(\lambda)\leq f(\mathbf{1})+\sum f_i(\mathbf{1})(\lambda_i-1)=\sum f_i(\mathbf{1})\lambda_i=\frac{1}{n}\sum\lambda_i\;\;\mbox{in $K$}
 \ee
and
\be\label{Int13}
\sum f_i(\lambda)=f(\lambda)+\sum f_i(\lambda)(1-\lambda_i)\geq f(\mathbf{1})=1\;\;\mbox{in $K$.}
\ee

In this paper, we always assume the initial surfaces $\Sigma_0$ to be connected and orientable, and $\Sigma(t)=\{\mathbf{X}:=(x,u(x,t))\mid(x,t)\in\Omega\times[0,T),\;x_{n+1}=u(x,t)\}$ to be the flow surfaces with $\mathbf{X}=(x,u(x,t))$ satisfying the flow equation (\ref{Int0}). If $\Sigma$ is a complete hypersurface in $\mathbb{H}^{n+1}$ with compact asymptotic boundary at infinity, then the normal vector field of $\Sigma$ is always chosen to be the one pointing to the unique unbounded region in $\mathbb{R}^{n+1}_+/\Sigma,$ and both Euclidean and hyperbolic principal curvature of $\Sigma$ are calculated with respect to this normal vector field.

 We shall take $\Gamma=\partial\Omega,$ where $\Omega\subset\mathbb{R}^n$ is a smooth domain and seek a family of hypersurfaces $\Sigma(t)$ as a graph of function $u(x,t)$ with boundary $\Gamma.$ Then the coordinate vector fields and upper unit normal are given by
 \[\mathbf{X}_i=e_i+u_ie_{n+1},\;\nu_H=u\nu=u\frac{(-u_ie_i+e_{n+1})}{w},\]
 where through out this paper, $w=\sqrt{1+|\na u|^2}$ and $e_{n+1}$ is the unit vector in the positive $x_{n+1}$ direction in $\mathbb{R}^{n+1}.$

Note that by equation (\ref{Int0})
 \[\left<\dot{\mathbf{X}}, \nu_H\right>_H=f-\sigma,\]
which is equivalent to
\[\left<\frac{\partial}{\partial t}(x, u(x, t)), \nu_H\right>_H=f-\sigma,\]
from here we can derive that the height function $u$ satisfies equation
\be\label{Int1} u_t=(f-\sigma)uw.\ee

So problem (\ref{Int0}) then reduces to the Dirichlet problem for a fully nonlinear second order parabolic equation
\be\label{Int2}
\left\{
\begin{aligned}&u_t=uw(f-\sigma) &\mbox{on $\Omega\times [0, T)$\,,}\\
&u(x, 0)=u_0 &\mbox{on $\Omega\times \{0\}$\,,}\\
&u(x, t)=0 &\mbox{on $\partial\Omega\times [0, T)$\,.}
\end{aligned}
\right.
\ee

In this paper, we shall focus on proving the long time existence of the modified general curvature flow ($\mathbf{MGCF}$) of complete embeded hypersurfaces with initial surface whose principal curvature greater than $\sigma$ everywhere; furthermore, we shall also prove the uniqueness under additional assumptions.

To begin with, I'd like to state the following beautiful result of \cite{GSZ09}
\begin{theorem}\label{Intt0}
Let $\Sigma$ be a complete locally strictly convex $C^2$ hypersurface in $\mathbb{H}^{n+1}$ with compact asymptotic boundary at infinity. Then $\Sigma$ is the vertical graph of a function $u\in C^2(\Omega)\cap C^0(\ol{\Omega}),$ $u>0$ in $\Omega$ and $u=0$ on $\partial\Omega,$ for some domain $\Omega\subset\mathbb{R}^n:$
\[\Sigma=\{(x,u)\in\mathbb{R}^{n+1}_+:x\in\Omega\}\]
such that
\be\label{Int14}
\{\delta_{ij}+u_iu_j+uu_{ij}\}>0\;\;\mbox{in $\Omega$.}
\ee
That is, the function $u^2+|x|^2$ is strictly convex.
\end{theorem}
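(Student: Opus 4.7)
The proof naturally splits into a local algebraic part, relating pointwise hyperbolic strict convexity of a graph to positivity of the matrix $\delta_{ij}+u_iu_j+uu_{ij}$, and a genuinely geometric part, showing that $\Sigma$ must be a vertical graph over some domain $\Omega\subset\mathbb{R}^n$ whose Euclidean boundary is $\Gamma$. I would dispose of the algebraic part first, since it is a direct computation in the half-space model. Writing $\Sigma$ locally as $x_{n+1}=u(x)$, with $w=\sqrt{1+|\nabla u|^2}$, the induced hyperbolic first fundamental form is $g^H_{ij}=u^{-2}(\delta_{ij}+u_iu_j)$, and either via the conformal identity $g_H=x_{n+1}^{-2}g_E$ combined with the standard transformation law of the second fundamental form, or by a direct Gauss-type calculation in the frame $\{\partial_i+u_i\partial_{n+1}\}$, one gets
\[
h^H_{ij}=\frac{1}{u^{2}w}\bigl(\delta_{ij}+u_iu_j+u\,u_{ij}\bigr).
\]
(Sanity check: the Euclidean hemisphere $u=\sqrt{R^{2}-|x|^{2}}$ is totally geodesic in $\mathbb{H}^{n+1}$, and indeed $\delta_{ij}+u_iu_j+uu_{ij}\equiv 0$ on it, after using $uu_i=-x_i$.) Since $g^H$ is positive definite, the hyperbolic principal curvatures are all positive exactly when $\{\delta_{ij}+u_iu_j+uu_{ij}\}$ is positive definite, and that symmetric matrix is the Hessian of $\tfrac{1}{2}(u^{2}+|x|^{2})$, so positivity is equivalent to strict convexity of $u^{2}+|x|^{2}$.

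For the geometric part, I would use that a locally strictly convex $C^2$ hypersurface in $\mathbb{H}^{n+1}$ has, at every point $p$, a strict local supporting totally geodesic hyperplane $T^H_p\Sigma$; completeness together with the compactness of the asymptotic boundary $\Gamma$ should upgrade this to a global statement, namely that $\Sigma$ bounds a closed hyperbolically convex set $\mathcal{K}\subset\mathbb{H}^{n+1}$ whose intersection with $\partial_\infty\mathbb{H}^{n+1}$ is a compact set $\overline{\Omega}$ with $\partial\Omega=\Gamma$. In the half-space model, totally geodesic hyperplanes are precisely Euclidean vertical half-hyperplanes and Euclidean hemispheres centered on $\{x_{n+1}=0\}$. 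If $T^H_p\Sigma$ were a vertical half-hyperplane at some $p\in\Sigma$, its asymptotic boundary would be an unbounded affine hyperplane in $\mathbb{R}^n$ containing $\overline{\Omega}$ on one side, which, after choosing $p$ at a suitable extremal position relative to $\Omega$, contradicts the boundedness of $\Omega$. Hence every $T^H_p\Sigma$ is a Euclidean hemisphere, so the Euclidean tangent plane of $\Sigma$ at $p$ is non-vertical, and the vertical projection $\pi:\Sigma\to\mathbb{R}^n$ is a local diffeomorphism onto an open bounded set.

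The injectivity of $\pi$ is the step I expect to be the main obstacle. If $\pi(p)=\pi(q)$ with $p\neq q$, the Euclidean vertical segment joining $p$ and $q$ is a hyperbolic geodesic arc contained in $\mathcal{K}$; at the lowest and highest intersection points of the vertical line $\pi^{-1}(\pi(p))$ with $\Sigma$, the hyperbolic outward normal has $(n+1)$-th components of opposite sign, so connectedness of $\Sigma$ forces the normal to become horizontal somewhere, producing a vertical Euclidean tangent plane and contradicting the previous paragraph. Combined with the local diffeomorphism property this yields a global graph $\Sigma=\{(x,u(x)):x\in\Omega\}$ over an open bounded $\Omega\subset\mathbb{R}^n$; the continuity $u\in C^0(\overline{\Omega})$ with $u=0$ on $\partial\Omega$ comes from the completeness of $\Sigma$ together with the identification of $\Gamma$ as its asymptotic boundary, while $C^2$ regularity in $\Omega$ is inherited from $\Sigma$. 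The genuinely non-local ingredients are the exclusion of vertical supporting hyperplanes and the injectivity of $\pi$; both crucially exploit the compactness of $\Gamma$.
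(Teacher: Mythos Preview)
The paper does not actually prove Theorem~\ref{Intt0}: it is quoted verbatim as ``the following beautiful result of \cite{GSZ09}'' and then used as a black box, so there is no proof in this paper for your proposal to be compared against. Any detailed comparison would have to be made with the original argument in Guan--Spruck--Szapiel.

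That said, a few comments on your sketch on its own terms. The algebraic part is correct and matches exactly the identities the present paper records: combining (\ref{Foh2}) with $\tilde h_{ij}=u_{ij}/w$, $\nu^{n+1}=1/w$, $\tilde g_{ij}=\delta_{ij}+u_iu_j$ gives precisely your formula $h^H_{ij}=\tfrac{1}{u^2 w}(\delta_{ij}+u_iu_j+uu_{ij})$, so positivity of the hyperbolic second fundamental form is equivalent to (\ref{Int14}).

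In the geometric part, the step you flag as delicate really is the crux, but the argument you give for excluding vertical tangent hyperplanes is not quite complete. Having a vertical totally geodesic supporting hyperplane through some $p\in\Sigma$ only tells you that $\overline{\Omega}$ lies in a closed half-space of $\mathbb{R}^n$; that alone does not contradict boundedness of $\Omega$, and ``choosing $p$ at a suitable extremal position relative to $\Omega$'' presupposes a relation between $p$ and $\Omega$ that you have not yet established at this stage of the argument. You also invoke, without proof, the passage from \emph{local} strict hyperbolic convexity to the existence of a global hyperbolically convex body $\mathcal{K}$ bounded by $\Sigma$; this Hadamard-type step is nontrivial and is where completeness and the compactness of $\Gamma$ must be used carefully. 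A cleaner route, closer in spirit to \cite{GSZ09}, is to work directly with $\nu^{n+1}$: it equals $1$ at a highest point of $\Sigma$, and one shows it can never vanish by analyzing what local strict convexity forces at a first point where $\nu^{n+1}=0$ (using, e.g., the relation $\kappa_i=u\tilde\kappa_i+\nu^{n+1}$ from (\ref{Foh3})). Once $\nu^{n+1}>0$ everywhere, your injectivity argument for the vertical projection goes through essentially as written.
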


According to Theorem \ref{Intt0}, our assumption that $\Sigma(t)$ is a graph is completely general and the asymptotic boundary $\Gamma$ must be the boundary of some bounded domain $\Omega$ in $\mathbb{R}^n.$

We seek solution of equation (\ref{Int2}) satisfying (\ref{Int14}) for all $t\in[0,T)$. (We will see in section \ref{Pre} that when the initial surface of the MGCF under certain restriction then the solution of (\ref{Int2}) must satisfy (\ref{Int14}).) Following the literature we call such solutions $admissible.$ By \cite{CNS85} condition (\ref{Int5}) implies that equation (\ref{Int2}) is parabolic for admissible solutions.

The main result of this paper may be stated as follows.
\begin{theorem}\label{Intt1}
Let $\Gamma=\partial\Omega\times\{0\}\subset\mathbb{R}^{n+1}$ where $\Omega$ is a bounded smooth domain in $\mathbb{R}^n.$ Suppose that $\sigma\in(0,1)$ and that $f$ satisfies conditions (\ref{Int5})-(\ref{Int11}) with $K=K^+_n.$ Furthermore, let $\Sigma_0=\{(x,u_0(x))\mid u_0\in C^{\infty}(\Omega)\cap C^{1+1}(\ol{\Omega})\}$ be a complete locally strictly convex hypersurface with $\partial\Sigma_0=\Gamma$ and $f(\kappa[\Sigma_0])$ greater than $\sigma,$ then there exists a solution $\Sigma(t),\;t\in[0,\infty),$ to the MGCF (\ref{Int0}) with uniformly bounded principal curvatures
\be\label{Int15}
|\ka[\Sigma(t)]|\leq C\;\;\mbox{on $\Sigma(t),$ for all $t\in[0,\infty).$ }
\ee
Moreover, $\Sigma(t)=\{(x,u(x,t))\mid (x,t)\in\Omega\times[0,\infty)\}$ is the flow surfaces of an admissible solution $u(x,t)\in C^{\infty}(\Omega\times(0,\infty))\cap W^{2,1}_p(\Omega\times[0,\infty))$ of the Dirichlet problem (\ref{Int2}), where $p>4$. Furthermore, for any fixed $t>0,$ we have $u^2(x,t)\in C^{\infty}(\Omega)\cap C^{1+1}(\ol{\Omega})$ and
\be\label{Int16}
u|D^2u|\leq C\;\;\mbox{in $\Omega$,}
\ee
\be\label{Int17}
\sqrt{1+|Du|^2}\leq C\;\;\mbox{in $\Omega$,}
\ee
where $C$ is some constant independent of $t$. In addition, if
\be\label{Int20}
\sum f_i>\sum\lambda_i^2f_i\;\;\mbox{in $K\cap\{0<f<1\},$}
\ee
then as $t\rightarrow\infty,$ $u(t)$ converges uniformly to a function $\tilde{u}\in C^{\infty}(\Omega)\cap C^1(\ol{\Omega}),$ such that $\Sigma_{\infty}=\{(x,\tilde{u})\in\mathbb{R}^{n+1}, x\in\Omega\}$ is a unique complete locally strictly convex surface satisfies $f(\ka[\Sigma_{\infty}])=\sigma$ in $\mathbb{H}^{n+1}.$
\end{theorem}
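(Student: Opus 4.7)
My plan is to treat (\ref{Int2}) as a fully nonlinear parabolic Dirichlet problem for the graph height $u(x,t)$ and combine short-time existence with a continuation method. Condition (\ref{Int5}) together with the admissibility (\ref{Int14}) guarantees that the linearization is uniformly parabolic on compact subsets of $\Omega\times[0,T)$, so for a $C^{1+1}$ initial surface a standard implicit function theorem in Hölder classes produces a unique smooth short-time solution on $\Omega\times[0,\tau]$. To continue past $\tau$ one needs a priori estimates that are uniform in $t$ for the quantities $u$, $|Du|$, the second fundamental form, and the modulus of ellipticity. Once these are in hand, Krylov--Safonov/Evans--Krylov plus bootstrap give interior $C^\infty$ estimates, while the boundary behavior is captured by showing $u^2$ is $C^{1+1}$ up to $\partial\Omega$. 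The flow can then be extended for all $t\in[0,\infty)$.

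\textbf{A priori estimates.} First, a $C^0$ bound: since $f(\kappa[\Sigma_0])>\sigma$ on $\Sigma_0$, equation $u_t=(f-\sigma)uw$ gives $u_t>0$ initially, and one constructs geodesic-sphere (or horospherical) barriers above $\Sigma_0$ to prove $0\le u\le C$ for all $t$. Next, by the maximum principle applied to the evolution of the quantity $u/\sqrt{1+|Du|^2}$ (equivalently the angle $\langle\nu,e_{n+1}\rangle$), together with boundary regularity of $\Omega$, I expect to bound $|Du|$ along $\partial\Omega$ using a local flat barrier and then globalize, yielding (\ref{Int17}). The boundary $C^2$ estimates for $u^2$ follow as in \cite{GSZ09} by exploiting (\ref{Int14}): near $\partial\Omega$, the natural object is $v=u^2+|x-x_0|^2$, which is strictly convex, and tangential-normal Hessian entries of $v$ are controlled by the Dirichlet data while purely normal entries come from the equation.

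\textbf{The curvature bound.} The main obstacle is a uniform in $t$ bound on the principal curvatures of $\Sigma(t)$ up to the asymptotic boundary. I would evolve $\log\kappa_{\max}+\varphi(\langle\nu,e_{n+1}\rangle)+\psi(u)$ for suitable increasing $\varphi$ and $\psi$, exploiting the concavity (\ref{Int6}) to dominate bad third-order terms. The asymptotic estimate (\ref{Int11}) is the key structural input that lets us absorb error terms involving an unbounded principal curvature in the normal direction near $\partial\Omega$, while the homogeneity (\ref{Int10}) gives $\sum f_i\kappa_i=f$, fixing the scaling. Together with the already obtained $C^0$ and $C^1$ bounds, one arrives at $\sup_t\kappa_{\max}\le C$ and hence (\ref{Int15}) and (\ref{Int16}). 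At this point, (\ref{Int2}) becomes uniformly parabolic on $\Omega\times[0,\infty)$, and Krylov--Evans regularity yields $u\in C^\infty(\Omega\times(0,\infty))\cap W^{2,1}_p(\Omega\times[0,\infty))$ for any $p>4$.

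\textbf{Convergence and uniqueness.} Under the extra assumption (\ref{Int20}) I would differentiate (\ref{Int2}) in $t$ and show that $(f-\sigma)$, or equivalently $u_t/(uw)$, satisfies a parabolic equation whose zero-order coefficient has the right sign; condition (\ref{Int20}) is exactly what makes the natural energy
\begin{equation*}
E(t)=\int_{\Sigma(t)}(f-\sigma)^2\,dV_H
\end{equation*}
monotonically decreasing, so $E(t)\to 0$. Combined with the uniform $C^{2,\alpha}$ bounds this forces $u(\cdot,t)$ to converge in $C^{1,\alpha}_{\mathrm{loc}}(\Omega)\cap C^0(\overline{\Omega})$ (hence $C^\infty_{\mathrm{loc}}$ by elliptic regularity) to a limit $\tilde u$ solving the elliptic Dirichlet problem $f(\kappa)=\sigma$, $\tilde u|_{\partial\Omega}=0$. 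Uniqueness of $\tilde u$ then follows from the comparison principle for the elliptic operator $f(\kappa[\,\cdot\,])-\sigma$ on locally strictly convex graphs, as in \cite{GS10}, completing the proof.
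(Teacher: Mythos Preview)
Your outline misses the central technical difficulty of the paper: equation (\ref{Int2}) is \emph{degenerate} at the boundary, since $u=0$ on $\partial\Omega$ kills the parabolicity. A ``standard implicit function theorem in H\"older classes'' does not produce even a short-time solution for this problem as stated, and none of the barrier or maximum-principle arguments you sketch make sense up to $\partial\Omega$ when the operator degenerates there. The paper handles this by introducing the approximate problem (\ref{Int3}) with boundary value $u=\epsilon$, proving long-time existence for each $\epsilon>0$, and then establishing estimates \emph{uniform in $\epsilon$} so as to let $\epsilon\to 0$. The whole point of Theorem~\ref{Intt3} is precisely that the interior curvature bound (\ref{Int21}) is independent of $\epsilon$, which is what allows the limit to be taken; the weaker bound (\ref{Int19}) alone would not suffice. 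Your proposal contains no trace of this approximation-and-limit mechanism.

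Two further remarks. First, your convergence argument misidentifies the role of hypothesis (\ref{Int20}). The paper does \emph{not} use (\ref{Int20}) to show $u(\cdot,t)\to\tilde u$: convergence comes simply from the monotonicity $u_t=(F-\sigma)uw>0$ (Lemma~\ref{Prel0}) together with the uniform $C^0$ bound, which gives pointwise convergence, and then $\int_0^\infty u_t\,dt<\infty$ forces $F-\sigma\to 0$ along a subsequence. Condition (\ref{Int20}) is invoked only in Section~\ref{Unf} to prove \emph{uniqueness} of the limit via an elliptic comparison principle along normal geodesics (where the ODE $\kappa_i'=\kappa_i^2-1$ and (\ref{Int20}) give strict monotonicity of $f$). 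Your claim that (\ref{Int20}) is ``exactly what makes'' the energy $\int(f-\sigma)^2\,dV_H$ decrease is unsupported and appears to be incorrect. Second, the paper's test function for the global curvature estimate is $\kappa_{\max}/(\nu^{n+1}-a)$ (Theorem~\ref{c2gt0}), and condition (\ref{Int11}) enters not there but in the \emph{boundary} second-derivative estimate for $u_{nn}$ (end of Section~\ref{C2b}); your attribution of (\ref{Int11}) to the interior curvature argument is off.
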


Due to the degeneracy of equation (\ref{Int2}) when $u=0,$ it is very natural to consider the
approximate modified general curvature flow ($\mathbf{AMGCF}$) problem. Instead of $u=0\;\;\mbox{on $\partial{\Omega}$}$ one assumes $u=\epsilon\;\;\mbox{on $\partial{\Omega}$}$, $\epsilon$ is small enough. So the equations become,
\be\label{Int3}
\left\{
\begin{aligned}
&u_t=uw(f-\sigma)&\mbox{on $\Omega\times [0, T),$}\\
&u(x, 0)=u_0^\epsilon&\mbox{on $\Omega\times \{0\},$}\\
&u(x, t)=\epsilon&\mbox{on\, $\partial\Omega\times [0, T).$}
\end{aligned}
\right.
\ee
where $u_0^\epsilon=u_0+\epsilon$ and $\Sigma_0=\{(x,u_0^\epsilon)|x\in\Omega\}$ satisfies $f(\ka[\Sigma_0^\epsilon])>\sigma,\;\;\forall x\in\Omega.$

\begin{theorem}\label{Intt2}
Let $\Omega$ be a bounded smooth domain in $\mathbb{R}^n$ and $\sigma\in(0,1).$ Suppose $f$ satisfies (\ref{Int5})-(\ref{Int11}) with $K=K^+_n.$ Then for any $\epsilon>0$ sufficiently small, there exists an admissible solution $u^{\epsilon}\in C^{\infty}(\ol{\Omega}\times(0,\infty))$ of the Dirichlet Problem (\ref{Int3}).
Moreover, $u^{\epsilon}$ satisfies the a priori estimates
\be\label{Int18}
\sqrt{1+|Du^{\epsilon}|^2}\leq \frac{1}{\sigma}+C\epsilon,\;\;
u^{\epsilon}|D^2u^{\epsilon}|\leq C\;\;\mbox{on $\partial\Omega\times[0,\infty),$}
\ee
and
\be\label{Int19}
u^{\epsilon}|D^2u^{\epsilon}|\leq C(t,\epsilon)\;\;\mbox{in $\Omega\times[0,\infty).$}
\ee
In particular, $C(t,\epsilon)$ depends exponentially on time $t.$
\end{theorem}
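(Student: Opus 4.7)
The plan is to solve (\ref{Int3}) by the continuation method. Since $u_0^\epsilon \geq \epsilon > 0$ on $\ol{\Omega}$, the linearized operator is uniformly parabolic and smooth initially, so short-time existence of a smooth admissible solution is standard; the task reduces to producing a priori $C^{2,\alpha}$ estimates on $[0,T]$ for arbitrary $T<\infty$, after which Evans--Krylov plus parabolic Schauder estimates give $C^\infty$ regularity and the solution extends to $[0,\infty)$. I would carry out the estimates in the order: $C^0$, gradient, boundary $C^2$, interior $C^2$.

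For the $C^0$ bound, differentiating (\ref{Int3}) in $t$ together with the initial strict inequality $f(\ka[\Sigma_0^\epsilon]) > \sigma$ and the maximum principle shows that $u^\epsilon_t \geq 0$ is preserved, giving the lower bound $u^\epsilon\ge u_0^\epsilon$, while an upper bound is obtained by comparison with a stationary spherical cap. Recall that a cap $\{(x,y)\in \mathbb{R}^{n+1}_+ : |x-x_0|^2 + y^2 = R^2\}$ has all hyperbolic principal curvatures equal to the constant $\sqrt{1-(r/R)^2}$ and thus solves $f(\ka)=\sigma$ for the appropriate $R$; placing such a cap above $\Sigma(t)$ at each boundary point yields, via the comparison principle, the boundary gradient bound $\sqrt{1+|Du^\epsilon|^2}\le 1/\sigma+C\epsilon$ in (\ref{Int18}). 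A global gradient bound of the same shape then follows by applying the parabolic maximum principle to $w=\sqrt{1+|Du|^2}$, whose evolution equation makes essential use of the concavity (\ref{Int6}) to kill the second-order terms at an interior maximum.

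The boundary $C^2$ bound in (\ref{Int18}) proceeds componentwise. Double-tangential derivatives vanish since $u^\epsilon\equiv\epsilon$ on $\partial\Omega$; the mixed tangential--normal components are handled by a Caffarelli--Nirenberg--Spruck type barrier argument, in which the growth assumption (\ref{Int11}) provides an upper bound for the linearization in non-admissible directions and the concavity (\ref{Int6}) supplies the required lower bound on $\sum f_i$; the double-normal component $u_{\nu\nu}$ is then extracted algebraically from the equation once the remaining entries of the hyperbolic Weingarten matrix are controlled, the gradient bound guaranteeing invertibility. The main obstacle is the interior estimate (\ref{Int19}), and it is where the exponential-in-$t$ growth enters. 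The approach is to apply the parabolic maximum principle to a test function of the form $W = \log \ka_{\max}(\Sigma(t)) + \varphi(|Du|^2) + Au$ in a frame diagonalizing the second fundamental form. The standard evolution equation of the second fundamental form in $\mathbb{H}^{n+1}$ produces, by concavity of $f$, a nonpositive quadratic Codazzi term that controls the bad gradient terms; however, the constant negative sectional curvature of the ambient space contributes an unavoidable linear-in-$\ka_{\max}$ term coming from the commutation of covariant derivatives, which cannot be absorbed. This yields a Gronwall-type inequality for $\log \ka_{\max}$, whence (\ref{Int19}) with exponential dependence on $t$. With the global $C^2$ bound in hand, the equation is uniformly parabolic and concave in the Hessian on each compact time slab, so Evans--Krylov delivers interior $C^{2,\alpha}$ estimates and Schauder theory closes the argument near $\partial\Omega$; bootstrapping gives $u^\epsilon\in C^\infty(\ol{\Omega}\times(0,\infty))$ and completes the continuation to $t=\infty$.
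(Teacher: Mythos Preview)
Your outline follows the paper's continuation-via-a-priori-estimates scheme, and the boundary gradient (spherical cap comparison) and boundary $C^2$ (CNS barrier plus extraction of the double-normal component from the equation) steps match closely. A few implementation details differ from the paper and a couple of small inaccuracies are worth flagging.

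For the interior gradient bound the paper does not apply the maximum principle to $w$ directly; rather it exploits the preserved local convexity (Theorem~\ref{Intt0}) to show that at an interior maximum of $uw$ one must have $Du=0$ (Proposition~\ref{Grep0}), and separately runs the maximum principle on the quantity $(\sigma-\nu^{n+1})/u$ (Theorem~\ref{Gret0}). Your proposed direct argument on $w$, with concavity ``killing the second-order terms'', is vaguer than what is actually used and would need more justification in this setting. For the interior $C^2$ estimate (\ref{Int19}) the paper's route is simpler than your test function $W=\log\ka_{\max}+\varphi(|Du|^2)+Au$: as Remark~\ref{Intr0} indicates, one just computes the evolution of $h_{11}$ (Lemma~\ref{c2gl0}), commutes covariant derivatives via (\ref{c2g22}), discards the concave Hessian term, and reads off an inequality of the form $(\partial_t-F^{ii}\na_{ii})\ka_1\le C\ka_1$ at a spatial maximum; Gronwall then gives the exponential-in-$t$ bound without any auxiliary $\varphi$ or $Au$ term. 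Finally, two small corrections: the double-tangential second derivatives on $\partial\Omega$ do not vanish but satisfy $|u_{\alpha\beta}|\le C|Du|$ (see (\ref{C2b25})); and assumption (\ref{Int11}) enters in bounding the double-normal component $u_{nn}$, not the mixed tangential--normal ones, which are handled purely by the barrier.
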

\begin{remark}\label{Intr0}
The a priori estimates (\ref{Int18}) will be proved in section \ref{Gre} and \ref{C2b}, while estimate (\ref{Int19}) can be proved by combining Lemma \ref{c2gl0} and equation (\ref{c2g22}) then use standard maximum principle for parabolic equation.
\end{remark}

The main technical difficulty in proving Theorem \ref{Intt1} is that we can not use the estimates (\ref{Int19}) to pass to the limit. We overcome this difficulty by proving a maximum principle for the largest hyperbolic principal curvature.

\begin{theorem}\label{Intt3}
Let $\Omega$ be a bounded smooth domain in $\mathbb{R}^n$ and $\sigma\in(0,1).$ Suppose $f$ satisfies (\ref{Int5})-(\ref{Int11}) with $K=K^+_n.$ Then for any admissible solution $u^\epsilon$ of the Dirichlet problem (\ref{Int3}),
\be\label{Int21}
u^{\epsilon}|D^2u^{\epsilon}|\leq C(1+\max_{\partial\Omega\times[0,\infty)}u^{\epsilon}|D^2u^{\epsilon}|)
\;\;\mbox{in $\Omega\times[0,\infty)$,}
\ee
where $C$ is independent of $\epsilon$ and $t.$
\end{theorem}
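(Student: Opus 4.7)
The plan is to establish \eqref{Int21} via a maximum principle for the largest hyperbolic principal curvature $\ka_n$ of $\Sigma(t)$. A direct computation expressing the hyperbolic Weingarten matrix of the graph $(x,u(x,t))$ in terms of $u$, $Du$ and $D^2u$ shows that, once the $C^1$ bound \eqref{Int18} is in hand, $u|D^2u|$ is comparable to $\ka_n$. It therefore suffices to establish a bound of the form $\ka_n\le C(1+\max_{\partial_p}\ka_n)$ in $\Omega\times[0,\infty)$, where $\partial_p$ denotes the parabolic boundary; the contribution at $t=0$ is absorbed into the universal constant using the initial data of Theorem \ref{Intt1}.

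My approach is to apply the parabolic maximum principle to a test function of the form
\[
W \;=\; \log\ka_n + \alpha\,\varphi\bigl(u,\langle \nu_H,e_{n+1}\rangle_H\bigr),
\]
where $\alpha>0$ is a universal constant to be chosen and $\varphi$ is an auxiliary function of the height and the vertical component of the hyperbolic normal. Assume $W$ attains its maximum at an interior space-time point $(X_0,t_0)$. At this point we diagonalize the Weingarten map (treating a multiple $\ka_n$ by the standard perturbation trick) and work with the linearization $\mathcal L=\partial_t-F^{ij}\na_i\na_j$ of the flow. A Simons-type computation, using the Codazzi identity and the fact that the ambient sectional curvature is $-1$, yields
\[
\mathcal L\ka_n \;\le\; F^{pq,rs}\na_n h_{pq}\,\na_n h_{rs} + 2\sum_{k\ne n}\frac{F^{ii}(\na_i h_{nk})^2}{\ka_n-\ka_k} + (f-\sigma)\ka_n^2 - \ka_n\sum F^{ii}\ka_i^2 + \mbox{(l.o.t.)},
\]
together with ambient contributions of the form $\pm\ka_n\sum F^{ii}$ coming from $\ol R_{ijkl}=-(g_{ik}g_{jl}-g_{il}g_{jk})$. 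Concavity \eqref{Int6} forces $F^{pq,rs}\na_n h_{pq}\,\na_n h_{rs}\le 0$, while \eqref{Int12}--\eqref{Int13} supply the bounds $\sum F^{ii}\ge 1$ and $F\le \frac{1}{n}\sum\ka_i$, which keep the sign-indefinite contributions under control.

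The main obstacle is the positive third-order term $\sum_{k\ne n}F^{ii}(\na_i h_{nk})^2/(\ka_n-\ka_k)$, which can blow up when other eigenvalues cluster near $\ka_n$. The standard way to handle it is to use $\na W=0$ at $(X_0,t_0)$, which gives $\na_i\log\ka_n=-\alpha\na_i\varphi$, and then to exploit the Codazzi symmetry $\na_i h_{jk}=\na_j h_{ik}$ together with Cauchy-Schwarz to dominate this bad term by $C\alpha^2\ka_n F^{ii}|\na_i\varphi|^2$ plus a small fraction of the good term $\ka_n\sum F^{ii}\ka_i^2$, provided $\alpha$ is taken universally large. Crucially, $\alpha$ must depend only on the structural data, not on $\epsilon$ or $t$; the nondegeneracy condition \eqref{Int11} and the trace bound \eqref{Int13} are invoked here to keep the choice uniform. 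Assembling all estimates, the inequality $0\ge \mathcal LW(X_0,t_0)$ forces $\ka_n\le C$ for a universal $C$, and combining with the boundary bound delivers \eqref{Int21}.
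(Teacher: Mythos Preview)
Your overall strategy---a parabolic maximum principle for $\ka_{\max}$ coupled with an auxiliary function built from $\nu^{n+1}$---is the right one, and agrees in spirit with the paper's. But several ingredients are misstated, and one of them is a genuine gap.

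First, a sign correction: in the Simons identity in hyperbolic space (equation \eqref{c2g22}) one gets $F^{ii}(h_{ii11}-h_{11ii})=\ka_1\sum f_i\ka_i^2-\ka_1^2F-F+\ka_1\sum f_i$, so the term $\ka_1\sum f_i\ka_i^2$ appears with a \emph{positive} sign (it is bad), while the $\ka_1^2$ contributions are negative. More importantly, the ``positive third-order term'' $\sum_{k\ne n}F^{ii}(\nabla_i h_{nk})^2/(\ka_n-\ka_k)$ you single out as the main obstacle actually enters with a \emph{good} sign when you differentiate the largest eigenvalue (or does not appear at all if, as in the paper, you work with the component $h_{11}$ at a point where the second fundamental form is diagonal). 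So the difficulty does not lie there.

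The real obstacle, isolated in \eqref{c2g8}, is the term $\frac{2\ka_1}{\nu^{n+1}-a}\sum_i f_i\frac{u_i^2}{u^2}(\nu^{n+1}-\ka_i)$, which arises from the evolution of $\nu^{n+1}$ in the test function $\ka_1/(\nu^{n+1}-a)$. It is dangerous precisely when some $\ka_i$ is small and the corresponding $f_i$ is large (the regime near $\partial K^+_n$); in that case there is no $f_i\ka_i^2$ available to absorb it, and your proposed Cauchy--Schwarz argument cannot see this. The paper therefore does \emph{not} discard $F^{pq,rs}\nabla_1 h_{pq}\nabla_1 h_{rs}$ via concavity. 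It invokes instead the sharper Andrews--Gerhardt inequality \eqref{c2g18}, which extracts from the concavity of $f$ the negative contribution $-2\sum_{i\ge2}\frac{f_i-f_1}{\ka_1-\ka_i}h_{i11}^2$. Combined with the critical-point identity \eqref{c2g19} (a consequence of the specific test function $\ka_1/(\nu^{n+1}-a)$, not a generic $\log\ka_n+\alpha\varphi$), this produces exactly the term that controls the bad index set $L=\{i:-\theta\ka_1<\ka_i-\nu^{n+1}<0,\ f_i\ge\theta^{-1}f_1\}$ in the $I/J/L$ decomposition \eqref{c2g10}--\eqref{c2g12}. Balancing against $-\frac{a\ka_1}{\nu^{n+1}-a}\sum f_i(\ka_i-\nu^{n+1})^2$ then fixes $\theta$ and yields the uniform bound $\ka_1\le 4/a^2$.

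Finally, condition \eqref{Int11} plays no role in this interior estimate; it is used only in the \emph{boundary} second-derivative estimate (Section~\ref{C2b}) to control $u_{nn}$. Your appeal to it here is misplaced.
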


By applying Theorem \ref{Intt3} to Theorem \ref{Intt2}, one can see that the hyperbolic curvatures of the admissible solution $u^{\epsilon}$ are uniformly bounded from above. Later we will also show that, if our initial surface satisfies $f(\ka[\Sigma_0])>\sigma$ then $f>\sigma$ during the flow process. In particular,

\begin{theorem}\label{Intt4}
Suppose $f$ satisfies (\ref{Int5})-(\ref{Int11}) with $K=K^+_n,$ and $u^{\epsilon}(x,t)$ is an admissible solution
of the Dirichlet problem (\ref{Int3}), and in addition 
\be\label{Int23}
f(\ka[\Sigma_0^\epsilon])>\sigma.
\ee
Then we have
\be\label{Int22}
f(\ka[\Sigma^\epsilon(t)])>\sigma\;\;\mbox{$\forall t\in[0,T).$}
\ee
\end{theorem}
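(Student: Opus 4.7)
The plan is to view $G := f(\kappa[\Sigma^\epsilon(t)]) - \sigma$ as the solution of a homogeneous linear parabolic equation on the evolving hypersurface and apply the parabolic maximum principle.

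A standard calculation for the Weingarten tensor under a flow $\dot{\mathbf{X}} = \phi\, \nu_H$ in the $(-1)$-curvature ambient $\mathbb{H}^{n+1}$ produces, in a principal frame,
\[
\partial_t h_i^j \;=\; \nabla^j \nabla_i \phi \;+\; \phi\, h_i^k h_k^j \;-\; \phi\, \delta_i^j,
\]
the final term being the contribution of the ambient sectional curvature $-1$. Setting $\phi = F - \sigma$ with $F := f(\kappa[\Sigma^\epsilon(t)])$, contracting with $F_j^i := \partial f/\partial h_i^j$, and using the $1$-homogeneity identity $\sum_k f_k \kappa_k = F$, the chain rule $\partial_t F = F_j^i \,\partial_t h_i^j$ reduces to
\[
\bigl(\partial_t - F^{ij} \nabla_i \nabla_j\bigr) G \;=\; c(x,t)\, G, \qquad c(x,t) \;:=\; \sum_k f_k \kappa_k^2 - \sum_k f_k.
\]
By Theorem \ref{Intt3} the principal curvatures of any admissible $\Sigma^\epsilon(t)$ are uniformly bounded, so $c$ is bounded.

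Next I check the parabolic boundary data for $G$. On $\Omega \times \{0\}$, hypothesis (\ref{Int23}) gives $G > 0$. On the lateral boundary $\partial\Omega \times [0, T)$, the Dirichlet condition $u^\epsilon = \epsilon$ forces $\partial_t u^\epsilon \equiv 0$; substituting into equation (\ref{Int1}), $u_t = uw(f - \sigma)$, and using $uw \geq \epsilon > 0$, yields $G \equiv 0$ on $\partial\Omega \times [0, T)$. Hence $G \geq 0$ on the entire parabolic boundary, and the weak maximum principle applied to the displayed equation gives $G \geq 0$ throughout $\Omega \times [0, T)$. If $G$ vanished at some interior point $(x_0, t_0) \in \Omega \times (0, T)$, the strong minimum principle would force $G \equiv 0$ on all of $\Omega \times [0, t_0]$ (using that $\Omega$ is connected), contradicting $G(\cdot, 0) > 0$. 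This proves (\ref{Int22}).

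The main obstacle I anticipate is the careful sign bookkeeping in the evolution of $h_i^j$ in the hyperbolic ambient, in particular confirming that the sectional curvature $-1$ produces precisely the $-\phi\, \delta_i^j$ term and that the resulting operator $\partial_t - F^{ij}\nabla_i\nabla_j$ is genuinely parabolic, with bounded zero-order coefficient, on admissible solutions. Once this is in place, the maximum-principle step is essentially textbook.
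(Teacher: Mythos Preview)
Your approach matches the paper's (Lemma~\ref{Prel0}): show that $G=F-\sigma$ satisfies a homogeneous linear parabolic equation and apply the maximum principle. The paper works in the graph parametrization rather than the pure normal flow, so its zero-order coefficient (equation~(\ref{Pre0})) picks up extra terms in $\nu^{n+1}$ and $w$ beyond your $\sum f_k\kappa_k^2-\sum f_k$; this is just the tangential reparametrization and does not affect the argument. For the strict inequality the paper shifts by a small constant and reruns the weak argument with $e^{-\lambda t}(F-\sigma-\epsilon)$ rather than invoking the strong minimum principle; either device works.

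There is one logical wrinkle. You invoke Theorem~\ref{Intt3} to bound $c$, but that theorem only controls the interior curvature in terms of its boundary values, and the paper's boundary $C^2$ estimate (Theorem~\ref{C2bt0}) in turn uses Corollary~\ref{Prec0}, which is itself a consequence of Lemma~\ref{Prel0}. So the chain you are using is circular. The paper avoids this entirely: since $u^\epsilon$ is already given as an admissible solution on $[0,T)$, the coefficient is continuous and hence bounded on any compact slab $\overline\Omega\times[0,T^*]$ with $T^*<T$; one simply takes $\lambda$ larger than this maximum and works with $e^{-\lambda t}(F-\sigma)$ there. Replacing your appeal to Theorem~\ref{Intt3} with this elementary compactness observation removes the circularity.
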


Thus one can conclude that the hyperbolic curvatures admit a uniform positive lower bound, so by the interior estimates of Evans and Krylov, we obtain a uniform $C^{2,\alpha}$ estimates for any compact subdomain of $\Omega.$ Then the proof of Theorem \ref{Intt1} becomes routine.

The paper is organized as follows. In section \ref{Foh} we establish some basic identities for hypersurfaces in $\mathbb{H}^{n+1}.$ In section \ref{Sho} we prove the short time existence of the Dirichlet problem for AMGCF. Section \ref{Evo} contains some essential identities and evolution equations which will be used later. The preserving of convexity will be proved in section \ref{Pre}. Section \ref{Gre} contains a global gradient estimate, while in sections \ref{C2b} and \ref{c2g} we prove the boundary and global estimates for the second derivative of $u$ respectively. Finally in sections \ref{Con} and \ref{Unf}, we discuss the  convergence and uniqueness of the MGCF.

\section{Formulas for hyperbolic principal curvatures}\label{Foh}
\subsection{Formulas on hypersurfaces}\label{Fohfirst}
 We will compare the induced hyperbolic and Euclidean metrics and derive some basic identities on a hypersurface.

Let $\Sigma$ be a hypersurface in $\mathbb{H}^{n+1}.$ We shall use $g,$ and $\na$ to denote the induced hyperbolic metric and Levi-Civita connections on $\Sigma,$ respectively. Since $\Sigma$ also can be viewed as a submanifold of $\mathbb{R}^{n+1},$ we shall usually distinguish a geodesic quantity with respect to Euclidean metric by adding a 'tilde' over the corresponding hyperbolic quantity. For instance, $\tg$ denotes the induced metric on $\Sigma$ from $\mathbb{R}^{n+1},$ and $\tna$ is its Levi-Civita connection.

Let $(z_1,\cdots,z_n)$ be local coordinates and
$$\tau_i=\frac{\partial}{\partial z_i},\;\;\mbox{$i=1,\cdots,n$}.$$
The hyperbolic and Euclidean metrics of $\Sigma$ are given by
\be\label{Foh0}
g_{ij}=\lll<\tau_i, \tau_j\rrr>_H,\;\;\tg_{ij}=\tau_i\cdot\tau_j=u^2g_{ij},
\ee
while the second fundamental forms are
\be\label{Foh1}
\begin{aligned}
&h_{ij}=\lll<D_{\tau_i}\tau_j, \mathbf{n}\rrr>_H=-\lll<D_{\tau_i}\mathbf{n},\tau_j\rrr>_H,\\
&\thh_{ij}=\nu\cdot\tilde{D}_{\tau_i}\tau_j=-\tau_j\cdot\tilde{D}_{\tau_i}\nu,\\
\end{aligned}
\ee
where $D$ and $\tilde{D}$ denote the Levi-Civita connection of $\mathbb{H}^{n+1}$ and $\mathbb{R}^{n+1},$ respectively.
The following relations are well known (see equation(1.5),(1.6) of \cite{GS08} ):
\be\label{Foh2}
h_{ij}=\frac{1}{u}\thh_{ij}+\frac{\nu^{n+1}}{u^2}\tg_{ij}.
\ee
\be\label{Foh3}
\ka_i=u\tilde{\ka}_i+\nu^{n+1},\;\;\mbox{$i=1,\cdots,n,$}
\ee
where $\nu^{n+1}=\nu\cdot e_{n+1}=\frac{1}{w}.$

The Christoffel symbols are related by formula
\be\label{Foh9}
\Gamma^k_{ij}=\tilde{\Gamma}^k_{ij}-\frac{1}{u}(u_i\delta_{kj}+u_j\delta_{ik}-\tg^{kl}u_l\tg_{ij}).
\ee
It follows that for $v\in C^2(\Sigma)$
\be\label{Foh10}
\na_{ij}v=v_{ij}-\Gamma^k_{ij}v_k=\tna_{ij}v+\frac{1}{u}(u_iv_j+u_jv_i-\tg^{kl}u_kv_l\tg_{ij})
\ee
where and in the sequel (if no additional explanation)
\[v_i=\frac{\partial v}{\partial x_i},\;v_{ij}=\frac{\partial^2v}{\partial x_i\partial x_j},\;etc.\]
In particular,
\be\label{Foh11}
\na_{ij}u=\tna_{ij}u+\frac{2u_iu_j}{u}-\frac{1}{u}\tg^{kl}u_ku_l\tg_{ij}.
\ee
Moreover in $\mathbb{R}^{n+1},$
\be\label{Foh12}
\tg^{kl}u_ku_l=|\tna u|^2=1-(\nu^{n+1})^2
\ee
\be\label{Foh13}
\tna_{ij}u=\thh_{ij}\nu^{n+1}.
\ee
We note that all formulas above still hold for general local frame $\tau_1, \cdots, \tau_n.$ In particular, if $\tau_1, \cdots, \tau_n$ are orthonormal  in the hyperbolic metric, then $g_{ij}=\delta_{ij}$ and $\tg_{ij}=u^2\delta_{ij}.$

We now consider equation (\ref{Int0}) on $\Sigma.$ For $K$ as in section 1, let $\mathcal{A}$ be the vector space of $n\times n$ matrices and
\[\mathcal{A}_K=\left\{A=\{a_{ij}\}\in\mathcal{A}: \lambda(A)\in K\right\},\]
where $\lambda(A)=(\lambda_1,\cdots,\lambda_n)$ denotes the eigenvalues of $A.$ Let $F$ be the function defined by
\be\label{Foh4}
F(A)=f(\lambda(A)),\;\;A\in\mathcal{A}_K
\ee
and denote
\be\label{Foh5}
F^{ij}(A)=\frac{\partial F}{\partial a_{ij}}(A),\;\;F^{ij,kl}(A)=\frac{\partial^2F}{\partial a_{ij}\partial a_{kl}}(A).
\ee
Since $F(A)$ depends only on the eigenvalues of $A,$ if $A$ is symmetric then so is the matrix $\left\{F^{ij}(A)\right\}.$ Moreover,
\[F^{ij}(A)=f_i\delta_{ij}\]
when $A$ is diagonal, and
\be\label{Foh6}
F^{ij}(A)a_{ij}=\sum f_i(\lambda(A))\lambda_i=F(A),
\ee
\be\label{Foh7}
F^{ij}(A)a_{ik}a_{jk}=\sum f_i(\lambda(A))\lambda^2_i.
\ee
Equation (\ref{Int2}) can therefore be rewritten in a local frame $\tau_1,\cdots, \tau_n$ in the form
\be\label{Foh8}
\left\{
\begin{aligned}
&u_t=uw(F(A[\Sigma])-\sigma)&\,\,(x,t)\in\Omega\times[0,T),\\
&u(x,0)=u_0&\,\,(x,t)\in\Omega\times\{0\},\\
&u(x,t)=0&\,\,(x,t)\in\partial\Omega\times[0,T),
\end{aligned}
\right.
\ee
where $A[\Sigma]=\lll\{g^{ik}h_{kj}\rrr\}.$ let $F^{ij}=F^{ij}\lll(A[\Sigma]\rrr),$ $F^{ij,kl}=F^{ij,kl}\lll(A[\Sigma]\rrr).$
\subsection{Vertical graphs}\label{Fohsecond} Suppose $\Sigma$ is locally represented as the graph of a function $u\in C^2(\Omega),$ $u>0,$ in a domain $\Omega\subset\mathbb{R}^n:$
\[\Sigma=\{(x,u(x))\in\mathbb{R}^{n+1}: x\in\Omega\}.\]
In this case we take $\nu$ to be the upward (Euclidean) unit normal vector field to $\Sigma:$
\[\nu=\lll(-\frac{Du}{w},\frac{1}{w}\rrr),\;w=\sqrt{1+|Du|^2}.\]
The Euclidean metric and second fundamental form of $\Sigma$ are given respectively by
\[\tg_{ij}=\delta_{ij}+u_iu_j,\]
and
\[\thh_{ij}=\frac{u_{ij}}{w}.\]
According to \cite{CNS86}, the Euclidean principal curvature $\tilde{\kappa}[\Sigma]$ are the eigenvalues of symmetric matrix $\tilde{A}[u]=[\tilde{a}_{ij}]:$
\be\label{Foh14}
\tilde{a}_{ij}:=\frac{1}{w}\gamma^{ik}u_{kl}\gamma^{lj},
\ee
where
\[\gamma^{ij}=\delta_{ij}-\frac{u_iu_j}{w(1+w)}.\]
Note that the matrix $\{\gamma^{ij}\}$ is invertible with the inverse
\[\gamma_{ij}=\delta_{ij}+\frac{u_iu_j}{1+w}\]
which is the square root of $\{\tilde{g}_{ij}\},$ i.e., $\gamma_{ik}\gamma_{kj}=\tilde{g}_{ij}.$ From (\ref{Foh3}) we see that the hyperbolic principal curvatures $\kappa[u]$ of $\Sigma$ are eigenvalues of the matrix $A[u]=\{a_{ij}[u]\}:$
\be\label{Foh15}
a_{ij}:=\frac{1}{w}\lll(\delta_{ij}+u\gamma^{ik}u_{kl}\gamma^{lj}\rrr).
\ee
When $\Sigma$ is a vertical graph we can also define $F(A[\Sigma])=F(A[u]).$

\section{Short time existence} \label{Sho}
In order to prove a global existence for the Dirichlet problem (\ref{Int3}), first of all, we need start with short time existence.

We state a more general result of short time existence.

\begin{theorem}\label{Shol0}
Let $G(D^2u,Du,u)$ be a nonlinear operator, which is smooth with respect to $u, Du$ and $D^2u.$
Suppose that $G$ is defined for function $u$ belonging to an open set $\Lambda\subset C^2(\Omega)$
and $G$ is elliptic for any $u\in\Lambda,$ i.e., $G^{ij}>0.$ Then the initial value problem
\be\label{Sho0}
\left\{
\begin{aligned}
&u_t=G(D^2u,Du,u)\;\;&\mbox{on $\Omega\times [0, T),$}\\
&u(x, 0)=u_0 \;\;&\mbox{on $\Omega\times \{0\},$}\\
&u(x, t)=u_0|_{\partial\Omega}\;\; &\mbox{on $\partial\Omega\times [0, T),$}
\end{aligned}
\right.
\ee
has a unique solution $u$ for $T=\epsilon >0$ small enough. Furthermore, $u$ is smooth
except for the corner, when $u_0\in\Lambda$ is of class $C^{\infty}(\ol\Omega).$
\end{theorem}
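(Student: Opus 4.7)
The plan is to reduce the nonlinear Dirichlet problem to standard linear parabolic theory via a Picard iteration in parabolic H\"older spaces. Fix $\alpha\in(0,1)$ and work in $X_T:=C^{2+\alpha,1+\alpha/2}(\ol\Omega\times[0,T])$ with target $Y_T:=C^{\alpha,\alpha/2}(\ol\Omega\times[0,T])$. The key input is that since $u_0\in\Lambda$, the matrix $G^{ij}(D^2u_0,Du_0,u_0)$ is positive definite on $\ol\Omega$, so for any $v$ in a small $X_T$-neighbourhood of the time-independent extension $\tilde u_0(x,t):=u_0(x)$ and for $T$ small enough, the linearization
\[
L_v w := w_t - a^{ij}(v)\,w_{ij} - b^k(v)\,w_k - c(v)\,w
\]
is uniformly parabolic with $C^{\alpha,\alpha/2}$ coefficients; here $a^{ij}$, $b^k$, $c$ denote the partial derivatives of $G$ with respect to $D^2u$, $Du$, and $u$, evaluated at $(D^2v,Dv,v)$.

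Next, I would define the iteration $\mathcal T\!:v\mapsto w$ by letting $w\in X_T$ solve the linear problem $L_v w = H(v)$ with initial data $u_0$ and boundary data $u_0|_{\partial\Omega}$, where $H(v):=G(D^2v,Dv,v)-a^{ij}(v)\partial_{ij}v-b^k(v)\partial_k v - c(v)v$ is the constant term chosen so that a fixed point $w=v=u$ recovers (\ref{Sho0}). Classical Schauder theory for linear parabolic Dirichlet problems (Ladyzhenskaya--Solonnikov--Ural'tseva, or Lieberman) makes $\mathcal T$ well-defined and continuous into $X_T$, and for $T$ small enough it contracts a small ball around $\tilde u_0$. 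The resulting fixed point $u\in X_T$ is the desired admissible solution; uniqueness in $X_T$ follows from a standard comparison argument applied to the linear parabolic equation satisfied by the difference of two solutions with zero parabolic boundary data.

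Finally, smoothness on $\Omega\times(0,T)$ and up to the smooth lateral boundary $\partial\Omega\times(0,T)$ follows by Schauder bootstrapping: differentiating the equation in $x$ or $t$ produces linear parabolic equations for the derivatives, and iterated $C^{2+\alpha,1+\alpha/2}$ estimates propagate regularity to arbitrary order. The main obstacle, and the reason for the ``except for the corner'' caveat, is the corner $\partial\Omega\times\{0\}$: smoothness there would require higher-order compatibility conditions such as $G(D^2u_0,Du_0,u_0)\equiv 0$ on $\partial\Omega$ (to match $u_t\equiv 0$ forced on $\partial\Omega$ by the static boundary data against $u_t|_{t=0}=G(D^2u_0,Du_0,u_0)$ in $\Omega$), which are not assumed. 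The $C^{2+\alpha,1+\alpha/2}$ framework tolerates this corner irregularity while retaining enough regularity in the interior and on the lateral boundary for the iteration and subsequent bootstrapping to go through; this is the only technical subtlety beyond standard linear parabolic theory.
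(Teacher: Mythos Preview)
Your overall strategy (linearize and apply a fixed-point argument) is sound and standard, but the choice of function space creates a genuine gap. You set the iteration in $X_T=C^{2+\alpha,1+\alpha/2}(\ol\Omega\times[0,T])$, yet you correctly observe that the first-order compatibility condition $G(D^2u_0,Du_0,u_0)=0$ on $\partial\Omega$ is \emph{not} assumed. Without it, the linear parabolic Dirichlet problem $L_v w=H(v)$, $w(\cdot,0)=u_0$, $w|_{\partial\Omega}=u_0|_{\partial\Omega}$ simply does not produce a solution in $C^{2+\alpha,1+\alpha/2}$ up to the corner; Schauder theory requires compatibility for that conclusion. So $\mathcal T$ does not map $X_T$ to itself, and the contraction cannot be closed in the space you have chosen. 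Your remark that the framework ``tolerates'' the corner irregularity is exactly the point that needs work and cannot be waved away.

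The paper handles precisely this issue by working instead in the Sobolev class $W^{2,1}_p(\Omega\times[0,T])$ with $p>4$ (where no compatibility is needed), and replaces the contraction map by the inverse function theorem: one first solves the auxiliary linear heat problem $\dot{\tilde u}-\Delta\tilde u=G(D^2u_0,Du_0,u_0)-\Delta u_0$ with the given data, sets $\tilde f=\dot{\tilde u}-G(D^2\tilde u,D\tilde u,\tilde u)$ (which vanishes at $t=0$), observes that the map $\Phi(u)=(\dot u-G(D^2u,Du,u),\,u|_{t=0},\,u|_{\partial\Omega})$ has invertible derivative at $\tilde u$ by linear $L^p$ parabolic theory, and then perturbs $\tilde f$ to zero on $[0,\epsilon]$ by a cutoff $\eta_\epsilon$ so that $(f_\epsilon,u_0,u_0|_{\partial\Omega})$ lies in the image; smoothness away from the corner is recovered afterwards. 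Your argument could be repaired along similar lines by switching to $W^{2,1}_p$ (or weighted H\"older spaces adapted to the corner), but as written the iteration does not close.
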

\begin{proof}Here we will apply the inverse function theorem to prove the local existence of equation (\ref{Sho0}). Following the idea of \cite{C89} Section 3 (also \cite{G06} Chapter 2).

(i)Let $\tilde{u}$ be a solution of the linear parabolic problem
\be\label{Sho1}
\left\{
\begin{aligned}
&\dot{\tilde{u}}-\Delta\tilde{u}=G(D^2u_0,Du_0,u_0)-\Delta u_0\,,\\
&\tilde{u}(x,0)=u_0\,,\\
&\tilde{u}|_{\partial\Omega\times[0, T)}=u_0|_{\partial\Omega}\,.
\end{aligned}
\right.
\ee
By standard parabolic PDE theory (see \cite{H75} part IV pg.117 and \cite{C89} theorem 3.1),
we know that there exists a unique solution $\tilde{u}\in V\subset V_c,$
where $V=W^{2,1}_p (\Omega\times[0, T],  \mathbb{R})\bigcap C^{\infty}(\ol\Omega\times (0, T)),$
$V_c=W^{2,1}_p (\Omega\times[0, T], \mathbb{R})\bigcap C^{\infty}(\ol\Omega\times [c, T]), ${\footnote{$W^{2,1}_p$ is the space of function $f$ such that the norms
\[\|f\|_{w^{2,1}_p}=\|f\|_{Lp}+\left\|\frac{\partial f}{\partial t}\right\|_{Lp}+\sum_{i=1}^n\left\|\frac{\partial f}{\partial x_i}\right\|_{Lp}+\sum_{i,j=1}^n \left\|\frac{\partial^2f}{\partial x_i\partial x_j}\right\|_{Lp}\] are finite.}} $0<c<T$, $T>0$ is small enough  so that $\tilde{u}(\cdot, t)\in\Lambda$ for any $0\leq t\leq T,$ in addition, $4<p<\infty$.

(ii) Now let's define
\[\tf=\dot{\tu}-G(D^2\tu, D\tu, \tu).\]
It's easy to see that $\tf\in L^p(\Omega\times[0, T], \mathbb{R}),$
moreover there holds
\be\label{Sho2}
\tf(x, 0)=0,\,x\in\Omega.
\ee

(iii) Now consider the nonlinear operator $\Phi$
\[\Phi(u)=\left(\dot{u}-G(D^2u,Du,u), u|_{\Omega\times\{0\}}, u\mid_{\partial\Omega\times[0, T]}\right)\]
with image in
\[W=L^p\left(\Omega\times[0, T],\mathbb{R}\right)\times W^{2, 1}_p\lll(\Omega,\mathbb{R}\rrr)\times B^{2-1/p, 1-1/2p}_p\lll(\partial\Omega\times[0,T],\mathbb{R}\rrr).\] {\footnote{$B^{2-1/p,1-1/2p}_p$ is the space of functions f such that the norms
$$\begin{aligned}\|f\|_{B^{2-1/p,1-1/2p}_p}&=\|f\|_{Lp}+\left(\int_0^\infty\tau^{-1/2 -p}\left\|f(x,t)-f(x,t+\tau)\right\|^p_{Lp}d\tau\right)^{1/p}\\
&+\sum_{i=1}^n\left(\int_0^{\infty}\tau^{-1-p}\|2f(x,t)-f(x+\tau e_i,t)-f(x-\tau e_i,t)\|^p_{Lp}d\tau\right)^{1/p}\\
&+\sum_{i,j=1}^n\left(\int_0^\infty\tau^{-p}\|f_{x_j}(x+\tau e_i,t)-f_{x_j}(x,t)\|^p_{Lp}d\tau\right)^{1/p}\end{aligned}$$ are finite.
}}
It's clear that $\Phi$ is well defined in a neighborhood of $\tu\in V\subset V_c.$ Moreover,
$\Phi$ is continuously differentiable and its derivative $D\Phi$ evaluated at $\tu,$ equals the operator:
\[\mathfrak{L}\eta=\lll(\dot{\eta}- G^{ij}(\tu)\eta_{ij}-G^i(\tu)\eta_i-G^{u}(\tu)\eta,\eta|_{\Omega\times\{0\}},\eta|_{\partial\Omega\times[0, T]}\rrr)\]
for any $\eta\in V_c,$ and $Pr_1\circ\mathfrak{L}$ represents a uniformly parabolic linear operator with coefficients in $L^p\lll(\Omega\times[0,T], \mathbb{R}\rrr).$
Applying the inverse function theorem, we deduce that $\Phi$ restricted to a small ball $B_\rho(\tu)\subset V\subset V_c$ is a $C^1$--diffeomorphism onto an open neighborhood $U(\tf,u_0,u_0|_{\partial\Omega})\subset W.$
Now let
\be\label{Sho3}
\eta_\epsilon(t)=\left\{
\begin{aligned}&0,\,\,0\leq t\leq \epsilon\,,\\
&1,\,\,2\epsilon\leq t\leq T.
\end{aligned}
\right.
\ee
Define $f_\epsilon=\tf\eta_\epsilon,$ since $\lim_{\epsilon\rightarrow0}\lll|f_\epsilon-\tf\rrr|_{L^p( \Omega\times[0,T])}=0,$ when $\epsilon$ small enough we have
\[\lll(f_\epsilon,u_0,u_0|_{\partial\Omega}\rrr)\in U(\tf,u_0,u_0|_{\partial\Omega}).\]
Thus for $0\leq t\leq \epsilon$ there exists $u\in B_\rho(\tu)\subset V$ solves the initial value problem (\ref{Sho0}).

(iv) It remains to prove the uniqueness of the solution of equation (\ref{Sho0}).
If not, let $u$ and $\tu$ be two solutions in $V.$ We only need to show that when $0\leq t\leq\delta,\,0<\delta\leq\epsilon,$ $u$ and $\tu$ agree. The final result, that $u$ and $\tu$ agree on the whole interval $[0, \epsilon]$ follows by repeating the argument.

If $0<\delta$ is sufficiently small then the convex combination satisfies
\be\label{Sho4}
u_\tau=\tau u+(1-\tau)\tu\in\Lambda,\,\forall (t, \tau)\in [0,\delta]\times[0,1],
\ee
hence we deduce
\be\label{Sho5}
\begin{aligned}
0&=\dot{u}-\dot{\tu}-G(D^2u,Du,u)+G(D^2\tu,D\tu,\tu)\\
&=\int_0^1\frac{d}{d\tau}\lll[\dot{u}_\tau-G(D^2u_\tau,Du_\tau,u_\tau)\rrr]\\
&=(u-\tu)_t-a^{ij}(u-\tu)_{ij}-b^i(u-\tu)_i-c(u-\tu)
\end{aligned}
\ee
where $a^{ij}$ is positive definite. Since $u|_{\Omega\times\{0\}}=\tu|_{\Omega\times\{0\}}$ and $u|_{\partial\Omega\times[0,T)}=\tu|_{\partial\Omega\times[0,T)},$ the result $u=\tu$ in $[0,\delta]$ then follows from the parabolic maximum principle. (\cite{L96} Theorem 14.1, pg363)
\end{proof}

\section{Evolution equations for some geometric quantities}\label{Evo}
In this section, we will compute the evolution equations for some affine geometric quantities. Before we start, need to point out that in this section for $v\in C^2(\Sigma),$ we denote
$v_i=\tna_iv,$ $v_{ij}=\tna_{ij}v,$ etc.

\begin{lemma}\label{Evol0}(Evolution of the metrics). The metric $g_{ij}$ and $\tg_{ij}$ of $\Sigma(t)$ satisfies the evolution equations
\end{lemma}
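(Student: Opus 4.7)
The plan is to differentiate the defining expressions $g_{ij} = \langle \tau_i, \tau_j \rangle_H$ and $\tilde g_{ij} = \tau_i\cdot\tau_j = u^2 g_{ij}$ in time, using the flow equation $\dot{\mathbf{X}} = (F-\sigma)\nu_H$ together with the height equation $u_t = uw(F-\sigma)$ from (\ref{Int1}). This is the standard first variation formula for a hypersurface moving with prescribed normal speed, transcribed to the hyperbolic ambient; the two formulas in (\ref{Foh0}), (\ref{Foh1}) and (\ref{Foh2}) supply all the identifications that are needed.

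For $g_{ij}$ I would first observe that the coordinate vector fields $\tau_i = \partial \mathbf{X}/\partial z_i$ commute with $\dot{\mathbf{X}}$ on $\Omega\times[0,T)$, and that the hyperbolic Levi-Civita connection $D$ is metric-compatible, so
\begin{equation*}
\partial_t g_{ij} = \langle D_{\tau_i}\dot{\mathbf{X}},\tau_j\rangle_H + \langle \tau_i, D_{\tau_j}\dot{\mathbf{X}}\rangle_H.
\end{equation*}
Substituting $\dot{\mathbf{X}} = (F-\sigma)\nu_H$ and using $\langle \nu_H,\tau_j\rangle_H = 0$, only the derivative of $\nu_H$ contributes, and the Weingarten identity $\langle D_{\tau_i}\nu_H,\tau_j\rangle_H = -h_{ij}$ recorded in (\ref{Foh1}) yields
\begin{equation*}
\partial_t g_{ij} = -2(F-\sigma)\, h_{ij}.
\end{equation*}

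For $\tilde g_{ij}$ I would apply the product rule to the conformal relation $\tilde g_{ij} = u^2 g_{ij}$, substitute $u_t$ from (\ref{Int1}) and the formula just derived for $\partial_t g_{ij}$, obtaining
\begin{equation*}
\partial_t \tilde g_{ij} = 2uu_t\, g_{ij} + u^2\,\partial_t g_{ij} = 2(F-\sigma)\bigl(w\,\tilde g_{ij} - u^2 h_{ij}\bigr),
\end{equation*}
which via (\ref{Foh2}) rewrites in purely Euclidean quantities as
\begin{equation*}
\partial_t \tilde g_{ij} = 2(F-\sigma)\bigl((w-\nu^{n+1})\tilde g_{ij} - u\,\tilde h_{ij}\bigr).
\end{equation*}

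There is no genuine obstacle here; the main points that require care are purely bookkeeping. One must use the sign convention for the Weingarten map implicit in $h_{ij} = \langle D_{\tau_i}\tau_j,\nu_H\rangle_H$ from (\ref{Foh1}) (which forces $\langle D_{\tau_i}\nu_H,\tau_j\rangle_H = -h_{ij}$), and one must not forget that the factor $u^2$ in $\tilde g_{ij}=u^2 g_{ij}$ couples the scalar evolution $u_t = uw(F-\sigma)$ directly into the evolution of the Euclidean induced metric, accounting for the extra $w\,\tilde g_{ij}$ term.
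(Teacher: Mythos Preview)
Your route is the reverse of the paper's: the paper first computes the Euclidean evolution directly from $\dot{\mathbf X}=(F-\sigma)u\nu$, obtaining (\ref{Evo7}), and only afterwards differentiates $g_{ij}=u^{-2}\tilde g_{ij}$ to reach (\ref{Evo8}). Your direct hyperbolic calculation $\partial_t g_{ij}=-2(F-\sigma)h_{ij}$ is correct and is the standard first-variation identity for a flow with normal speed $(F-\sigma)$.

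There is, however, a genuine slip in your second step. When you differentiate $\tilde g_{ij}=u^2 g_{ij}$ and insert $u_t=uw(F-\sigma)$ from (\ref{Int1}), you are mixing two incompatible parametrizations. Equation (\ref{Int1}) belongs to the \emph{graph} parametrization $\mathbf X(x,t)=(x,u(x,t))$, whose velocity $u_t\,e_{n+1}$ differs from $(F-\sigma)\nu_H$ by a tangential field. Under the normal flow you used to derive $\dot g_{ij}$, the $(n+1)$-component of $\dot{\mathbf X}=(F-\sigma)u\nu$ gives $\partial_t u=(F-\sigma)u\,\nu^{n+1}$, not $(F-\sigma)uw$. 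With this correction the product rule yields
\[
\partial_t\tilde g_{ij}
=2(F-\sigma)\nu^{n+1}\tilde g_{ij}-2(F-\sigma)u^2 h_{ij}
=-2(F-\sigma)u\,\tilde h_{ij},
\]
which is exactly (\ref{Evo7}); the extra piece $(w-\nu^{n+1})\tilde g_{ij}$ in your final display is precisely the artifact of the wrong $u_t$. (The paper's own passage to (\ref{Evo8}) makes the mirror-image substitution, plugging $u_t=uw(F-\sigma)$ into $g_{ij}=u^{-2}\tilde g_{ij}$ after having obtained $\dot{\tilde g}_{ij}$ from the normal flow; the $w$ in (\ref{Evo8}) should therefore read $\nu^{n+1}$, in agreement with your identity $\dot g_{ij}=-2(F-\sigma)h_{ij}$ via (\ref{Foh2}). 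Formula (\ref{Evo8}) is not used elsewhere.)
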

\be\label{Evo8}\dot{g}_{ij}=-2u^{-2}\tgij(F-\sigma)w-2u^{-1}(F-\sigma)\thij,\ee
and
\be\label{Evo7}\dot{\tilde{g}}_{ij}=-2(F-\sigma)u\tilde{h}_{ij}.\ee
\begin{proof}
Since $\tgij=\tau_i\cdot\tau_j,$
\begin{align*}
&\frac{\partial}{\partial t}\tgij=2\lll<\tilde{D}_{\tau_i}\dot{X},\tilde{D}_{\tau_j}X\rrr>\\
&=2\lll<\tilde{D}_{\tau_i}[(F-\sigma)u\nu],\tau_j\rrr>\\
&=2(F-\sigma)u\lll<\tilde{D}_{\tau_i}\nu,\tau_j\rrr>\\
&=-2(F-\sigma)u\thij.
\end{align*}
Differentiating equation (\ref{Foh0}) with respect to $t$ we get
\begin{align*}
\frac{\partial}{\partial t}g_{ij}&=-2u^{-3}\tgij u_t+u^{-2}\dot{\tilde{g}}_{ij}\\
&=-2u^{-3}\tgij(F-\sigma)uw-2u^{-2}(F-\sigma)u\thij\\
&=-2u^{-2}\tgij(F-\sigma)w-2u^{-1}(F-\sigma)\thij.
\end{align*}
\end{proof}

\begin{lemma}\label{Evol1}(Evolution of the normal). The normal vector evolves according to
\be\label{Evo9} \dot{\nu}=-\tg^{ij}[(F-\sigma)u]_i\tau_j,\ee
moreover,
\be\label{Evo10}\dot{\nu}^{n+1}=-\tilde{g}^{ij}[(F-\sigma)u]_iu_j.\ee
\end{lemma}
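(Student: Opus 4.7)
The plan rests on the two algebraic constraints satisfied by the Euclidean unit normal $\nu$, namely $|\nu|^2 = 1$ and $\nu\cdot\tau_i = 0$ for each $i$. Differentiating the first in $t$ shows that $\dot\nu$ is tangent to $\Sigma(t)$, so I can write $\dot\nu = \alpha^j\tau_j$, and the whole problem reduces to identifying the coefficients $\alpha^j$. These will be recovered by extracting $\dot\nu\cdot\tau_i$ from the second constraint and then raising the index with $\tg^{ij}$.

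Concretely, I would first differentiate $\nu\cdot\tau_i = 0$ in $t$ to get $\dot\nu\cdot\tau_i = -\nu\cdot\dot\tau_i$. Since $\nu_H = u\nu$, the flow equation reads $\dot{\mathbf{X}} = (F-\sigma)u\nu$, and interchanging $\partial_t$ with $\tilde D_{\tau_i}$ gives
\[
\dot\tau_i = \tilde D_{\tau_i}\dot{\mathbf{X}} = [(F-\sigma)u]_i\,\nu + (F-\sigma)u\,\tilde D_{\tau_i}\nu.
\]
Pairing this with $\nu$ and using $\nu\cdot\tilde D_{\tau_i}\nu = 0$ yields $\nu\cdot\dot\tau_i = [(F-\sigma)u]_i$, hence $\dot\nu\cdot\tau_i = -[(F-\sigma)u]_i$. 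On the other hand $\dot\nu\cdot\tau_i = \alpha^j\tg_{ij}$, so inverting the Euclidean metric gives $\alpha^j = -\tg^{ij}[(F-\sigma)u]_i$, which is exactly (\ref{Evo9}).

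For (\ref{Evo10}) I would just take the Euclidean dot product of (\ref{Evo9}) with $e_{n+1}$. The parametrization $\mathbf{X}=(x,u(x,t))$ produces $\tau_j = e_j + u_j e_{n+1}$, whence $\tau_j\cdot e_{n+1} = u_j$, and the desired identity $\dot\nu^{n+1} = -\tg^{ij}[(F-\sigma)u]_i u_j$ drops out at once. I do not foresee any real obstacle in this lemma; it is pure book-keeping, and the only care required is to keep the Euclidean and hyperbolic pairings clearly separated, since the argument throughout operates with the Euclidean inner product (that being the setting in which $\nu$ is unit-length).
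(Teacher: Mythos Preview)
Your proposal is correct and follows essentially the same argument as the paper: both differentiate $|\nu|^2=1$ to see that $\dot\nu$ is tangential, then differentiate $\nu\cdot\tau_i=0$ and use $\dot{\mathbf{X}}=(F-\sigma)u\nu$ together with $\nu\cdot\tilde D_{\tau_i}\nu=0$ to obtain $\dot\nu\cdot\tau_i=-[(F-\sigma)u]_i$, after which raising the index with $\tg^{ij}$ gives (\ref{Evo9}) and dotting with $e_{n+1}$ gives (\ref{Evo10}). There is no meaningful difference in strategy.
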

\begin{proof}
Since $\nu$ is the unit normal vector of $\Sigma$, we have $\dot{\nu}\in T(\Sigma).$ Furthermore, differentiating
$$\lll<\nu,\tau_i\rrr>=\lll<\nu,\tilde{D}_{\tau_i}X\rrr>=0,$$
with respect to $t$ we deduce
\begin{align*}
\lll<\dot{\nu},\tau_i\rrr>&=-\lll<\nu,\tilde{D}_{\tau_i}[(F-\sigma)u\nu]\rrr>\\
&=-\lll<\nu,[(F-\sigma)u]_i\nu\rrr>\\
&=-[(F-\sigma)u]_i,
\end{align*}
so we have
\[\dot{\nu}=-\tguij[(F-\sigma)u]_i\tau_j.\]
Thus (\ref{Evo10}) follows directly from
\[\dot{\nu}^{n+1}=\lll<\dot{\nu},e_{n+1}\rrr>\; \mbox{and}\; u_j=\tau_j\cdot e_{n+1}.\]
\end{proof}

\begin{lemma}\label{Evol2}(Evolution of the second fundamental form). The second fundamental form evolves according to
\be\label{Evo11}\dot{\tilde{h}}^l_i=[(F-\sigma)u]^l_i+u(F-\sigma)\tilde{h}^k_i\tilde{h}^l_k,\ee
\be\label{Evo12}\dot{\tilde{h}}_{ij}=[(F-\sigma)u]_{ij}-u(F-\sigma)\tilde{h}^k_i\tilde{h}_{kj},\ee
and
\be\label{Evo13}\begin{aligned}&\dot{h}_{ij}=\frac{1}{u}\{[(F-\sigma)u]_{ij}-u(F-\sigma)\tilde{h}^k_i\tilde{h}_{kj}\}
-\frac{\thij}{u}w(F-\sigma)\\
&-\{\tg^{kl}[u(F-\sigma)]_ku_l\}\frac{\tgij}{u^2}
-2\frac{(F-\sigma)\nu^{n+1}}{u}\thij-2\frac{\tgij}{u^2}(F-\sigma).\end{aligned}\ee
\end{lemma}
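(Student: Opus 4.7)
The plan is to establish the three evolution equations in the order (\ref{Evo12}), (\ref{Evo11}), (\ref{Evo13}): first derive the evolution of the Euclidean second fundamental form in its $(0,2)$ form, then raise an index using Lemma \ref{Evol0}, and finally translate to the hyperbolic second fundamental form via the conversion identity (\ref{Foh2}).

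For (\ref{Evo12}), set $\phi:=(F-\sigma)u$ so that $\dot{X}=\phi\nu$ in Euclidean terms, since $\nu_H=u\nu$. Start from the Weingarten identity $\tilde{h}_{ij}=-\tau_j\cdot\tilde{D}_{\tau_i}\nu$ and differentiate in $t$, commuting $\partial_t$ with $\tilde{D}_{\tau_i}$ (legal in flat $\mathbb{R}^{n+1}$) and using $\dot{\tau}_j=\tilde{D}_{\tau_j}\dot{X}=\tilde{D}_{\tau_j}(\phi\nu)$ together with the formula for $\dot{\nu}$ from Lemma \ref{Evol1}. One set of contributions assembles into the covariant Hessian $[(F-\sigma)u]_{ij}$, while the quadratic contribution $-u(F-\sigma)\tilde{h}^k_i\tilde{h}_{kj}$ arises from the identity $\nu\cdot\tilde{D}_{\tau_i}\tilde{D}_{\tau_j}\nu=-\tilde{h}^k_i\tilde{h}_{kj}$, which one obtains by twice differentiating $|\nu|^2\equiv 1$ and invoking the Weingarten formula $\tilde{D}_{\tau_i}\nu=-\tilde{h}^k_i\tau_k$. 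Choosing local coordinates with $\tilde{\Gamma}^k_{ij}(p)=0$ at a fixed point keeps the bookkeeping of tangential versus normal components manageable.

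For (\ref{Evo11}), differentiate $\tilde{h}^l_i=\tilde{g}^{lm}\tilde{h}_{mi}$ and substitute Lemma \ref{Evol0}, which gives $\dot{\tilde{g}}_{ij}=-2\phi\tilde{h}_{ij}$ and hence $\dot{\tilde{g}}^{lm}=2\phi\tilde{h}^{lm}$. Combined with (\ref{Evo12}), the $+2\phi\tilde{h}^{lm}\tilde{h}_{mi}=+2\phi\tilde{h}^k_i\tilde{h}^l_k$ from the first piece and the $-\phi\tilde{h}^k_i\tilde{h}^l_k$ from the second piece add to $+\phi\tilde{h}^k_i\tilde{h}^l_k$. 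This sign flip of the Riccati term when raising an index is precisely what distinguishes the right-hand sides of (\ref{Evo12}) and (\ref{Evo11}).

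For (\ref{Evo13}), differentiate the conversion formula (\ref{Foh2}) in $t$ and substitute $u_t=\phi w$ from (\ref{Int1}); $\dot{\tilde{h}}_{ij}$ from (\ref{Evo12}); $\dot{\nu}^{n+1}$ from (\ref{Evo10}); and $\dot{\tilde{g}}_{ij}$ from (\ref{Evo7}). All resulting terms match those on the right-hand side of (\ref{Evo13}) once one uses the identity $w\nu^{n+1}=1$ (since $\nu^{n+1}=1/w$), which collapses $-\frac{2(F-\sigma)w\nu^{n+1}}{u^2}\tilde{g}_{ij}$ into $-\frac{2(F-\sigma)}{u^2}\tilde{g}_{ij}$. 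I expect the principal obstacle to be the derivation of (\ref{Evo12}): although it is a standard normal-flow variation formula in Euclidean space, one has to carefully track the tangential and normal decomposition of $\tilde{D}_{\tau_i}\tilde{D}_{\tau_j}(\phi\nu)$ when $\phi$ is position-dependent and combine it with the $\dot{\nu}$ formula from Lemma \ref{Evol1}. Once (\ref{Evo12}) is in hand, (\ref{Evo11}) is pure algebra and (\ref{Evo13}) is linear bookkeeping with the single simplification $w\nu^{n+1}=1$.
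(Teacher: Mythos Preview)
Your proposal is correct and follows essentially the same route as the paper. The only difference is the order: the paper first obtains (\ref{Evo11}) by equating two expressions for $\partial_t(\tilde{D}_{\tau_i}\nu)$ (one from spatially differentiating the $\dot{\nu}$ formula of Lemma~\ref{Evol1}, one from time-differentiating the Weingarten relation $\tilde{D}_{\tau_i}\nu=-\tilde{h}^k_i\tau_k$) and then lowers an index via (\ref{Evo7}) to get (\ref{Evo12}), whereas you derive (\ref{Evo12}) first and raise an index; the derivation of (\ref{Evo13}) from (\ref{Foh2}) is identical in both.
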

\begin{proof}
Differentiating (\ref{Evo9}) with respect to $\tau_i$ we get
\[\frac{\partial}{\partial t}\nu_i=-\tgukl[(F-\sigma)u]_{ki}\tau_l
-\tgukl[(F-\sigma)u]_k\tilde{D}_{\tau_i}\tau_l.\]
On the other hand, in view of the Weingarten Equation
\[\nu_i=-\tg^{kl}\thh_{li}\tau_k\Rightarrow\dot{\nu_i}=-\dot{\thh}^k_i\tau_k-\thh^k_i\tilde{D}_{\tau_k}\dot{X},\]
where $\thh^k_i=\tg^{kl}\thh_{li}$ is mixed tensor,
multiply by $\tau_j$ we get
\[-\dot{\thh}^k_i\tg_{kj}-\thh^k_i\lll<\tilde{D}_{\tau_k}\dot{X},
\tau_j\rrr>=-\tgukl[(F-\sigma)u]_{ki}\tg_{lj}.\]
Therefore
\begin{align*}\dot{\thh}^k_i\tg_{kj}&=\tgukl[(F-\sigma)u]_{ki}\tg_{lj}-\thh^k_iu(F-\sigma)\lll<\tilde{D}_{\tau_k}\nu,
\tau_j\rrr>\\
&=[(F-\sigma)u]_{ij}+u(F-\sigma)\thh^k_i\thh_{kj}.\end{align*}
Multiplying the resulting equation with $\tg^{jl}$
\be\label{Evo0}
\dot{\thh}^l_i=[(F-\sigma)u]^l_i+u(F-\sigma)\thh^k_i\thh^l_k.
\ee
Moreover, since $\thij=\thh^l_i\tg_{lj},$ differentiating it with respect to $t$ and use equation (\ref{Evo7}) get
\begin{align*}
\dot{\thh}_{ij}&=\dot{\thh}^l_i\tg_{lj}+\thh^l_i\dot{\tg}_{lj}\\
&=[(F-\sigma)u]^l_i\tg_{lj}+u(F-\sigma)\thh^k_i\thh^l_k\tg_{lj}+\thh^l_i[-2(F-\sigma)u\thh_{lj}]\\
&=[(F-\sigma)u]_{ij}-u(F-\sigma)\thh^k_i\thh_{kj}.
\end{align*}
Finally by differentiating equation (\ref{Foh2}) with respect to $t$,
we have
\be\label{Evo1}
\begin{aligned}
&\frac{\partial}{\partial t}h_{ij}=\frac{1}{u}\dot{\thh}_{ij}-\frac{\thij}{u^2}u_t+\frac{\tgij}{u^2}\dot{\nu}^{n+1}
+\frac{\nu^{n+1}}{u^2}\dot{\tg}_{ij}-2\frac{\nu^{n+1}\tgij}{u^3}u_t\\
&=\frac{1}{u}\{[(F-\sigma)u]_{ij}-u(F-\sigma)\thh^k_i\thh_{kj}\}-\frac{\thij}{u}w(F-\sigma)\\
&+\frac{\tgij}{u^2}\{-\tg^{kl}[u(F-\sigma)]_ku_l\}+\frac{\nu^{n+1}}{u^2}[-2(F-\sigma)u\thij]
-2\frac{\nu^{n+1}\tgij}{u^3}uw(F-\sigma)\\
&=\frac{1}{u}\{[(F-\sigma)u]_{ij}-u(F-\sigma)\thh^k_i\thh_{kj}\}-\frac{\thij}{u}w(F-\sigma)\\
&-\{\tgukl[u(F-\sigma)]_ku_l\}\frac{\tgij}{u^2}-2\frac{(F-\sigma)\nu^{n+1}}{u}\thij-2\frac{\tgij}{u^2}(F-\sigma).
\end{aligned}
\ee
\end{proof}

\begin{lemma}\label{Evol3}(Evolution of F) The term $F$ evolves according to the equation
\be\label{Evo14}\begin{aligned}&F_t=uF^{ij}[(F-\sigma)u]_i^j+(F-\sigma)\lll[\sum f_s\kappa^2_s-2\nu^{n+1}F+(\nu^{n+1})^2\sum f_s\rrr]\\
&+w(F-\sigma)\lll(F-\nu^{n+1}\sum f_s\rrr)-[(F-\sigma)u]_iu^i\sum f_s.\end{aligned}\ee
\end{lemma}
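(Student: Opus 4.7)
The proof is a direct computation via the chain rule. At each point, I would work in a local frame $\tau_1,\dots,\tau_n$ orthonormal in the hyperbolic metric that simultaneously diagonalizes the shape operator $A^i_j=g^{ik}h_{kj}$; by (\ref{Foh2}) the Euclidean second fundamental form $\tilde h_{ij}$ is then also diagonal, and $g_{ij}=\delta_{ij}$, $\tilde g_{ij}=u^2\delta_{ij}$, $h_{ii}=\kappa_i$, $\tilde h_{ii}=u(\kappa_i-\nu^{n+1})$, $F^{ij}=f_i\delta^{ij}$, with $\sum_s f_s\kappa_s=F$ by Euler's identity. Since $\kappa_i$ is an eigenvalue of the mixed tensor $A^i_j$, first-order eigenvalue perturbation combined with $\dot g^{ii}=-\dot g_{ii}$ at the frame yields
\begin{equation*}
F_t \;=\; \sum_i f_i \dot\kappa_i \;=\; -\sum_i f_i\kappa_i\dot g_{ii}+\sum_i f_i\dot h_{ii}.
\end{equation*}

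Plugging (\ref{Evo8}) into the first sum contributes $2(F-\sigma)wF+2(F-\sigma)\sum_s f_s\kappa_s^2-2(F-\sigma)\nu^{n+1}F$. For the second sum, I would substitute (\ref{Evo13}) and reduce each of its six pieces at the frame: the Hessian piece gives $\frac{1}{u}\sum_i f_i[(F-\sigma)u]_{ii}$, which is precisely $uF^{ij}[(F-\sigma)u]_i^j$ once one raises the second index with $\tilde g^{jk}=u^{-2}\delta^{jk}$; the $\tilde h^k_i\tilde h_{kj}$ piece becomes $-(F-\sigma)\sum_i f_i(\kappa_i-\nu^{n+1})^2$; the $\tilde h_{ij}w/u$ piece becomes $-w(F-\sigma)(F-\nu^{n+1}\sum_s f_s)$; the gradient piece yields exactly $-[(F-\sigma)u]_i u^i\sum_s f_s$ after recognizing $\tilde g^{kl}u_k v_l=u^{-2}\sum_k u_k v_k$; the $(F-\sigma)\nu^{n+1}\tilde h_{ij}/u$ piece becomes $-2(F-\sigma)\nu^{n+1}(F-\nu^{n+1}\sum_s f_s)$; and the final constant piece gives $-2(F-\sigma)\sum_s f_s$.

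Collecting terms, the pieces proportional to $\sum f_s\kappa_s^2$, $\nu^{n+1}F$, and $(\nu^{n+1})^2\sum f_s$ assemble exactly into the bracket $(F-\sigma)[\sum f_s\kappa_s^2-2\nu^{n+1}F+(\nu^{n+1})^2\sum f_s]$, while the Hessian and gradient pieces give $uF^{ij}[(F-\sigma)u]_i^j-[(F-\sigma)u]_iu^i\sum f_s$. The remaining terms are $w(F-\sigma)F+w(F-\sigma)\nu^{n+1}\sum f_s-2(F-\sigma)\sum f_s$, which appears to disagree with the claimed $w(F-\sigma)(F-\nu^{n+1}\sum f_s)$. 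The reconciliation is the graph identity $w\nu^{n+1}=1$ (immediate from $\nu=(-Du/w,1/w)$): then $w(F-\sigma)\nu^{n+1}\sum f_s-2(F-\sigma)\sum f_s=-(F-\sigma)\sum f_s=-w\nu^{n+1}(F-\sigma)\sum f_s$, which collapses the residual into the required $-w(F-\sigma)\nu^{n+1}\sum f_s$ and completes (\ref{Evo14}). The main obstacle is the bookkeeping across the six pieces of (\ref{Evo13}) together with spotting this last simplification via $w\nu^{n+1}=1$; without it, one is left with a spurious sign on the $w\nu^{n+1}\sum f_s$ term and a leftover $-2(F-\sigma)\sum f_s$.
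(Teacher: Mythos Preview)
Your computation is correct, but it takes a longer route than the paper's. The paper differentiates the identity $h^j_i=u\tilde h^j_i+\nu^{n+1}\delta^j_i$ directly and writes
\[
F_t=F^{ij}\bigl(u\tilde h^j_i+\nu^{n+1}\delta^j_i\bigr)_t
=u_tF^{ij}\tilde h^j_i+uF^{ij}\dot{\tilde h}^j_i+\dot\nu^{n+1}\sum f_s,
\]
then inserts the \emph{mixed} Euclidean evolution (\ref{Evo11}) and (\ref{Evo10}). This three–term split lands on (\ref{Evo14}) in two lines: the $u_t$ piece already produces $w(F-\sigma)(F-\nu^{n+1}\sum f_s)$ with the right sign, and the $u^2F^{ij}\tilde h^k_i\tilde h^j_k$ piece expands straight to the bracket $\sum f_s\kappa_s^2-2\nu^{n+1}F+(\nu^{n+1})^2\sum f_s$. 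No final reconciliation via $w\nu^{n+1}=1$ is needed.

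Your approach instead tracks $\dot\kappa_i=-\kappa_i\dot g_{ii}+\dot h_{ii}$ through (\ref{Evo8}) and the six–term covariant hyperbolic evolution (\ref{Evo13}). That is perfectly valid, and the bookkeeping you do is accurate; the price is that the $w$–terms come out with a provisional wrong sign, forcing the $w\nu^{n+1}=1$ trick at the end. The advantage of your route is that it stays entirely in hyperbolic quantities $(g,h)$; the advantage of the paper's route is that the relation $h^j_i=u\tilde h^j_i+\nu^{n+1}\delta^j_i$ packages the metric evolution automatically and makes the computation about half as long.
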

\begin{proof}
 We consider $F$ with respect to the mixed tensor $h^j_i.$ By equation (\ref{Evo10}) and (\ref{Evo11}) we have
\be\label{Evo2}
\begin{aligned}
&F_t=F^{ij}(h^j_i)_t=F^{ij}\lll(u\thh^j_i+\nu^{n+1}\delta_{ij}\rrr)_t\\
&=uF^{ij}[(F-\sigma)u]^j_i+u^2(F-\sigma)F^{ij}\thh^k_i\thh^j_k\\
&+uw(F-\sigma)F^{ij}\thh^j_i-[(F-\sigma)u]_iu^i\sum f_s\\
&=uF^{ij}[(F-\sigma)u]_i^j+(F-\sigma)\{\sum f_s\kappa_s^2-2\nu^{n+1}F+(\nu^{n+1})^2\sum f_s\}\\
&+w(F-\sigma)(F-\nu^{n+1}\sum f_s)-[(F-\sigma)u]_iu^i\sum f_s.
\end{aligned}
\ee
\end{proof}

\section{Preserving convexity} \label{Pre}
Let $u$ be an admissible solution of (\ref{Int3}) on the domain $\ol\Omega\times[0,T).$ In this section,
we are going to prove that if the initial surface is convex, then during the evolution,
the graph $\Sigma(t)=(x,u(x,t))$ stays convex for $t\in[0,T)$. For convenient, from now on we always choose $\tau_1, \cdots, \tau_n$ to be orthonormal in hyperbolic metrics, i.e., $g_{ij}=\delta_{ij}$ and $\tg_{ij}=u^2\delta_{ij}.$

\begin{lemma}\label{Prel0}
If the initial surface $\Sigma_0$ is convex, then for any $t\in[0,T),$
the flow surface $\Sigma(t)$ stays convex, what's more, if $f(\Sigma_0)>\sigma,$
then $f(\Sigma(t))>\sigma,\;\;\mbox{$(x,t)\in\Omega\times(0, T)$}$.
\end{lemma}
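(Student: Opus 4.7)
The plan is to prove both statements in tandem by reducing them to a single maximum principle on the scalar $\phi:=F-\sigma$. The key observation is that once we know $\phi>0$, strict convexity is automatic: condition (\ref{Int7}) forces $f=0$ on $\partial K^+_n$, so $F\ge\sigma>0$ means $\kappa[\Sigma(t)]$ must remain bounded away from $\partial K^+_n$, i.e., all hyperbolic principal curvatures stay strictly positive.

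I would start from Lemma \ref{Evol3}, expand the product-rule terms $[(F-\sigma)u]_{ij}=\phi_{ij}u+\phi_i u_j+\phi_j u_i+\phi u_{ij}$ and $[(F-\sigma)u]_i=\phi_i u+\phi u_i$, and collect terms to recast (\ref{Evo14}) as a linear uniformly parabolic equation
\begin{equation*}
\phi_t-u^2 F^{ij}\phi_{ij}-B^i\phi_i-C\phi=0,
\end{equation*}
with coefficients $B^i,\,C$ depending smoothly on $u$, $Du$, $\nu^{n+1}$, $w$, $\sum f_s$, $F^{ij}$, and $F^{ij}u_{ij}$, hence bounded on each compact subinterval $\bar\Omega\times[0,T-\delta]$. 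By (\ref{Int5}) the leading coefficient $u^2F^{ij}$ is positive definite, so the operator is uniformly parabolic. Next I would examine the parabolic boundary data: by hypothesis $\phi(\cdot,0)>0$, while on $\partial\Omega\times(0,T)$ the Dirichlet condition $u\equiv\epsilon$ forces $u_t=0$, and the flow equation $u_t=uw(F-\sigma)$ together with $uw>0$ then gives $\phi=0$. In particular $\phi\ge 0$ on the entire parabolic boundary.

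With these ingredients in place I would apply the strong parabolic maximum principle. To handle the possibly positive zero-order coefficient $C$, I would first rescale to $\tilde\phi:=e^{-\lambda t}\phi$ with $\lambda$ sufficiently large that the new zero-order coefficient $C-\lambda$ is nonpositive; the weak maximum principle then gives $\tilde\phi\ge 0$, hence $\phi\ge 0$, on $\bar\Omega\times[0,T)$. The strong version rules out the possibility that $\phi$ vanish at an interior point $(x_0,t_0)\in\Omega\times(0,T)$, since such a zero of $\tilde\phi$ would propagate back along the parabolic slab to $t=0$, contradicting the strictly positive initial data. This establishes $F>\sigma$ on $\Omega\times(0,T)$, and then strict convexity of $\Sigma(t)$ follows from the opening observation together with the admissibility of the solution on $[0,T)$.

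The main technical obstacle I anticipate lies in the first step: the zero-order coefficient $C$ assembled from the $\sum f_s\kappa^2_s$, shape-operator, $F^{ij}u_{ij}$, and $w(F-\sigma)$ contributions in (\ref{Evo14}) carries no \emph{a priori} sign, which is precisely why the $e^{-\lambda t}$ rescaling is required before the maximum principle can be invoked. A secondary and minor delicacy is the corner discontinuity of $\phi$ between its strictly positive initial value and its vanishing lateral value at $\partial\Omega\times\{0\}$, but the maximum principle only demands $\phi\ge 0$ on the parabolic boundary (which is satisfied), so this causes no real difficulty.
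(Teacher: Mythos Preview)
Your proposal is correct and follows essentially the same route as the paper: derive a linear parabolic equation for $\phi=F-\sigma$, use the $e^{-\lambda t}$ rescaling to neutralize the sign-indefinite zero-order coefficient, and invoke the maximum principle (the paper gets strict positivity via the auxiliary $\tilde F^\epsilon=e^{-\lambda t}(F-\sigma-\epsilon)$ rather than the strong maximum principle, but this is cosmetic). The only notable difference is that the paper first converts from the Euclidean covariant derivatives in (\ref{Evo14}) to hyperbolic ones via (\ref{Foh10}), arriving at the cleaner identity (\ref{Pre0}) in which all first-order terms cancel and the right-hand side is manifestly a scalar multiple of $(F-\sigma)$; your more direct expansion retains $B^i\phi_i$ terms, which costs nothing logically but obscures the structure slightly.
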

\begin{proof}
Combining equation (\ref{Foh10}) and Lemma \ref{Evol3} we have
\be\label{Pre0}
\begin{aligned}
&\frac{\partial F}{\partial t}-F^{ij}\na_{ij}F\\
&=\F\lll[\sum f_s\ka_s^2-\nu^{n+1}F+(\nu^{n+1})^2\sum f_s+wF-2\sum f_s\rrr].
\end{aligned}
\ee
Now consider function $\tilde{F}=e^{-\lambda t}\F,$ where $\lambda>0$ to be determined later. By equation (\ref{Pre0}) we know that $\tilde{F}$ satisfies
\be\label{Pre1}
\begin{aligned}
&\frac{\partial\tilde{F}}{\partial t}-F^{ij}\na_{ij}\tilde{F}\\
&=\tilde{F}\lll[\sum f_s\ka_s^2-\nu^{n+1}F+(\nu^{n+1})^2\sum f_s+wF-2\sum f_s-\lambda\rrr].
\end{aligned}
\ee
If $\tilde{F}$ achieves its negative minimum at an interior point $(x_0,t_0)\in\Omega_T=\Omega\times(0,T),$
then at this point we would have
\[0>\tilde{F}\lll[\sum f_s\ka_s^2-\nu^{n+1}F+(\nu^{n+1})^2\sum f_s+wF-2\sum f_s-\lambda\rrr].\]
By choosing $\lambda>\max_{\Omega\times[0,T^*]}\lll|f_s\ka_s^2-\nu^{n+1}F+(\nu^{n+1})^2\sum f_s+wF-2\sum f_s\rrr|$, where $0<t_0<T^*<T,$ leads to a contradiction.

Now under the hypothesis $f(\Sigma_0)>\sigma,$ assume there exists a $t_0\in(0,T)$ such that at $(x_0,t_0)\in\Omega\times(0,T)$ $F=\sigma$. Let $\tilde{F}^\epsilon=e^{-\lambda t}(F-\sigma-\epsilon),$ where $0<\epsilon<\inf_{x\in\ol\Omega}\{f(\Sigma_0(x))-\sigma\}.$ Then $\tilde{F}^\epsilon$ would obtain its negative minimum at an interior point while $\tilde{F}^\epsilon(\Sigma_0)>0$ leads to a contradiction.
\end{proof}

Similarly we have
\begin{corollary}\label{Prec0}
Let $\Sigma(t)=\{(x,u(x,t)),(x,t)\in\Omega\times[0,T)\}$ denote the flow surfaces, $f(\Sigma_0)>\sigma,$ and $u$ satisfies equation (\ref{Int1}),  then there exists a constant C only depends on $u_0,$ such that
\be\label{Pre4}
F-\sigma\leq Ce^{\lambda(T^*) t}\;\;\mbox{$\forall t\in[0,T^*),$ $0<T^*<T.$}
\ee
\end{corollary}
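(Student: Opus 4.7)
The plan is to mimic the maximum-principle argument used in the proof of Lemma \ref{Prel0}, but now extracting an upper bound from above rather than a lower bound from below. Recall from (\ref{Pre1}) that the auxiliary function $\tilde{F} = e^{-\lambda t}\F$ satisfies
\[
\tilde{F}_t - F^{ij}\na_{ij}\tilde{F} = \tilde{F}\Bigl[\sum f_s\ka_s^2 - \nu^{n+1}F + (\nu^{n+1})^2\sum f_s + wF - 2\sum f_s - \lambda\Bigr].
\]
First I would fix $T^*$ with $0 < T^* < T$ and choose
\[
\lambda = \lambda(T^*) > \sup_{\ol{\Omega}\times[0,T^*]}\Bigl|\sum f_s\ka_s^2 - \nu^{n+1}F + (\nu^{n+1})^2\sum f_s + wF - 2\sum f_s\Bigr|.
\]
Since Lemma \ref{Prel0} already guarantees $\tilde{F} > 0$ on $\ol{\Omega}\times[0,T^*]$, this choice of $\lambda$ forces the bracket on the right-hand side to be non-positive wherever $\tilde{F}>0$, reducing the identity to the differential inequality $\tilde{F}_t - F^{ij}\na_{ij}\tilde{F} \leq 0$.

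Next I would read off the parabolic boundary data for $\tilde{F}$. At $t=0$ one has $\tilde{F} = f(\ka[\Sigma_0]) - \sigma$, which is bounded above by a constant $C$ depending only on $u_0$. On $\partial\Omega\times[0,T^*]$ the Dirichlet condition $u\equiv\epsilon$ gives $u_t = 0$, and then equation (\ref{Int1}) together with $u,w>0$ forces $F=\sigma$, so $\tilde{F} = 0$ on the lateral boundary. Applying the parabolic weak maximum principle (Theorem 14.1 of \cite{L96}, already invoked in Section \ref{Sho}) yields $\tilde{F} \leq C$ on the whole of $\ol{\Omega}\times[0,T^*]$, and reversing the exponential substitution produces exactly the claimed estimate (\ref{Pre4}) with $C$ depending solely on $u_0$.

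The one genuinely non-trivial point is guaranteeing that $\lambda(T^*)$ is finite, i.e.\ that all of $w,\,\nu^{n+1},\,F,\,\sum f_s,$ and $\sum f_s\ka_s^2$ stay bounded on $\ol{\Omega}\times[0,T^*]$. This is where the short-time existence established in Theorem \ref{Shol0} enters: on the closed subinterval $[0,T^*]\Subset[0,T)$ the admissible solution is smooth with $C^2$ bounds on $u$ and hence bounded principal curvatures, so each of those quantities is controlled and $\lambda(T^*) < \infty$. With that step cleared the rest is routine, which is why the corollary is essentially a bookkeeping variant of the lemma it follows.
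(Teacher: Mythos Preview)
Your argument is correct and follows essentially the same route as the paper: both use the auxiliary function $\tilde F=e^{-\lambda t}(F-\sigma)$ with $\lambda=\lambda(T^*)$ chosen to dominate the bracketed coefficient, invoke the positivity of $\tilde F$ from Lemma~\ref{Prel0} to obtain a sub-solution inequality, and then apply the parabolic maximum principle with $\tilde F=0$ on the lateral boundary (the paper attributes this last fact to Theorem~\ref{Shol0}, while you derive it directly from $u_t=0$ there). Your additional remarks on the finiteness of $\lambda(T^*)$ simply make explicit what the paper leaves implicit.
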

\begin{proof}
We still consider the function $\tilde{F}=e^{-\lambda t}\F$ in $\Omega\times[0,T^*),\;0<T^*<T$ where $\lambda$ chosen in the same way as before, then by Lemma \ref{Prel0} we have
\[\frac{\partial\tilde{F}}{\partial t}-F^{ij}\na_{ij}\tilde{F}<0\;\;\mbox{in $\Omega\times[0,T^*).$}\]
 Now we apply maximum principle and conclude that $\tilde{F}$ achieves its maximum at the parabolic boundary. By Theorem \ref{Shol0} we know that $F\equiv\sigma$ on $\partial\Omega\times(0,T),$
 therefore let $C=\max_{x\in\ol\Omega}F(\Sigma_0(x))-\sigma,$ we get (\ref{Pre4}).
\end{proof}

\begin{remark}\label{Prer}
From Corollary \ref{Prec0}, we can see that for any fixed $0<T^*<T,$ there exists a constant $C$ only depends on initial surface $\Sigma_0$ and $T^*,$ such that for any $0\leq t\leq T,$ we have $F<C.$
\end{remark}

\section{Gradient estimates} \label{Gre}
In this section we shall show that for $t\in(0,T)$ an upward unit normal of a solution tends to a fixed asymptotic angle with our axis $e_{n+1}$ on approach to the boundary. Combining this with following results gives us a global gradient bound for the solution.

The following lemma is similar to Theorem 3.1 of \cite{GS10}.
\begin{lemma}\label{Grel0}
Let $\Sigma(t)=\lll\{(x, u(x,t)): (x,t)\in\Omega_T\rrr\}$ be the flow surfaces with $u(x,t)$ is an admissible solution of equation (\ref{Int3}). Then for $\epsilon>0$ sufficiently small,
\be\label{Gre11}
\frac{\sigma-\nu^{n+1}}{u}<\frac{\sqrt{1-\sigma^2}}{r_1}+\frac{\epsilon(1+\sigma)}{r_1^2}\,on\,\partial\Omega\times(0,T),
\ee
where $r_1$ is the maximal radius of exterior tangent sphere to $\partial\Omega.$ Moreover, when $0<t<T$ we have $\nu^{n+1}\rightarrow\sigma$ on $\partial\Omega$ as $\epsilon\rightarrow 0.$
\end{lemma}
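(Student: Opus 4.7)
The plan is a barrier argument at each boundary point $y\in\partial\Omega$, using the maximal exterior tangent ball $B_{r_1}(x_0)$ to $\Omega$ at $y$. The geometric input is that the lower hemisphere of the Euclidean sphere in $\mathbb{R}^{n+1}$ of radius $R$ centered at $(x_0,\sigma R)$ has all hyperbolic principal curvatures equal to $\sigma$: with respect to the upward unit normal of the graph its Euclidean principal curvatures are $1/R$ and $\nu^{n+1}=(\sigma R-u)/R$, so (\ref{Foh3}) gives $\kappa_i=u\cdot(1/R)+(\sigma R-u)/R=\sigma$. By homogeneity and the normalization $f(\mathbf{1})=1$, this surface satisfies $f(\kappa)\equiv\sigma$, so
\[
\bar u(x)=\sigma R-\sqrt{R^2-|x-x_0|^2}
\]
is a stationary solution of the flow PDE in (\ref{Int3}). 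I would choose $R=R(\epsilon)$ so that $\bar u(y)=\epsilon$, i.e. $r_1^2=R^2-(\sigma R-\epsilon)^2$, which yields
\[
R=\frac{-\sigma\epsilon+\sqrt{\epsilon^2+(1-\sigma^2)r_1^2}}{1-\sigma^2}.
\]

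On $\Omega':=\Omega\cap B_R(x_0)$ I would apply the parabolic maximum principle to $u$ and $\bar u$. The exterior-ball property forces $|x-x_0|\geq r_1$ on $\partial\Omega\cap\bar\Omega'$, and since $\bar u$ is radially increasing from $0$ at $|x-x_0|=R\sqrt{1-\sigma^2}$ to $\sigma R$ at $|x-x_0|=R$, one has $\bar u\geq\epsilon=u$ there; assuming initial and outer-lateral comparison can be arranged (see below), the maximum principle then delivers $u(\cdot,t)\leq\bar u$ on $\Omega'\times[0,T)$. At the tangency $y$ this forces $|Du(y,t)|\leq|D\bar u(y)|=r_1/(\sigma R-\epsilon)$, hence $w(y,t)\leq R/(\sigma R-\epsilon)$ and
\[
\nu^{n+1}(y,t)\geq\frac{\sigma R-\epsilon}{R}=\sigma-\frac{\epsilon}{R}.
\]
Since $u(y,t)=\epsilon$, this reads $(\sigma-\nu^{n+1})/u\leq 1/R$. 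Rationalizing and using $\sqrt{\epsilon^2+(1-\sigma^2)r_1^2}\leq\epsilon+r_1\sqrt{1-\sigma^2}$ together with $\epsilon^2+r_1^2\geq r_1^2$,
\[
\frac{1}{R}=\frac{\sigma\epsilon+\sqrt{\epsilon^2+(1-\sigma^2)r_1^2}}{\epsilon^2+r_1^2}\leq\frac{\sqrt{1-\sigma^2}}{r_1}+\frac{(1+\sigma)\epsilon}{r_1^2},
\]
which is (\ref{Gre11}); the strict inequality can be extracted from Hopf's lemma applied to the nonnegative difference $\bar u-u$. Letting $\epsilon\to 0$, the right-hand side of (\ref{Gre11}) multiplied by $u=\epsilon$ tends to zero, yielding $\nu^{n+1}\to\sigma$ on $\partial\Omega$.

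The main obstacle I anticipate is verifying $\bar u\geq u$ on the full parabolic boundary of $\Omega'\times[0,T)$. The inner lateral piece inside $\partial\Omega$ is automatic, but the initial condition $u_0^\epsilon\leq\bar u$ on $\Omega'$ and the outer lateral condition $u(\cdot,t)\leq\sigma R$ on $\Omega\cap\partial B_R(x_0)$ both require uniform a priori height control on the admissible solution. These should follow from the preservation of $f>\sigma$ in Section~\ref{Pre} together with an a priori sup-bound on the initial surface; if necessary one can replace $\bar u$ by the infimum of a vertically translated family of such barriers, or shrink the comparison domain to $\Omega\cap B_\rho(y)$ for small $\rho$ where $u$ is close to $\epsilon$ by continuity, so as to avoid sharpening the final inequality.
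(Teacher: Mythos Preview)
Your proposal is correct and uses exactly the same barrier as the paper: the Euclidean sphere of radius $R$ centered at height $\sigma R$ over the center of the exterior tangent ball, which is an equidistant hypersurface with all hyperbolic principal curvatures equal to $\sigma$. The resulting inequality $\nu^{n+1}\geq \sigma-\epsilon/R$ and the estimate $1/R\leq \sqrt{1-\sigma^2}/r_1+(1+\sigma)\epsilon/r_1^2$ coincide with the paper's computation.

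The only procedural difference is how the comparison $\Sigma(t)\cap B_1=\emptyset$ (equivalently $u\leq\bar u$ near $y$) is obtained. You set it up as a parabolic maximum principle on the subdomain $\Omega'=\Omega\cap B_R(x_0)$ and correctly flag the verification of $u\leq\bar u$ on the full parabolic boundary (initial slice and outer lateral piece) as the remaining step; your suggestion of sliding a vertically translated family of such spheres is the standard way to close this. The paper bypasses this by invoking Lemma~3.3 of \cite{LX10}, which asserts the disjointness $B_1\cap\Sigma(t)=\emptyset$ directly; that lemma presumably encodes precisely the height control and sliding argument you anticipate. In short, your argument and the paper's are the same in substance, with the paper outsourcing the barrier-placement step to a cited lemma rather than carrying it out in situ.
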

\begin{proof}
We first assume $r_1<\infty.$ Let $\Gamma^\epsilon$ denote the vertical $\epsilon$-lift of boundary $\Gamma,$ for a fixed point $x_0\in\Gamma^{\epsilon},$ let $\bf{e}_1$ be the outward unit normal vector to $\Gamma^{\epsilon}$ at $x_0.$ Let $B_1$ be a ball in $\mathbb{R}^{n+1}$ of radius $R_1$ centered at $a=(x_0+r_1\mathbf{e}_1, R_1\sigma)$ where $R_1$ satisfies $R_1^2=r_1^2+(R_1\sigma-\epsilon)^2.$

Note that $B_1\cap P(\epsilon)=\{x\in\mathbb{R}^{n+1}|x_{n+1}=\epsilon\}$ is an n-ball of radius $r_1,$ which externally tangent to $\Gamma^{\epsilon}.$ By Lemma 3.3 of \cite{LX10}, we know that $B_1\cap\Sigma(t)={\emptyset},$ for any $t\in[0,T)$ hence at $x_0,$ we have
\[\nu^{n+1}>-\frac{u-\sigma R_1}{R_1}.\]
By an easy computation we can get
\[R_1\geq\frac{r_1^2}{\sqrt{(1-\sigma^2)r_1^2}+(1+\sigma)\epsilon}\]
thus equation (\ref{Gre11}) is proved.
If $r_1=\infty,$ then in the above argument one can replace $r_1$ by any $r>0$ and then let $r\rightarrow\infty.$

Applying Theorem {\ref{Shol0}}, similarly we derive
$$-\frac{\sqrt{1-\sigma^2}}{r_2}-\frac{\epsilon(1-\sigma)}{r_2^2}<\frac{\sigma-\nu^{n+1}}{u}\;\;\mbox{on $\partial\Omega\times(0,T)$}$$
where $r_2$ is the radius of the largest interior tangent sphere to $\partial\Omega.$
\end{proof}

\begin{proposition}\label{Grep0}
Let $\Sigma(t)$ be the flow surfaces, where $\Sigma(t)=\{(x,u(x,t)):\,(x,t)\in\Omega_T\}$
and $u(x,t)$ satisfies the AMGCF equation (\ref{Int3}). Then
\be\label{Gre0}
\frac{1}{\nu^{n+1}}\leq\max\lll\{\frac{\max_{\Omega_T}u}{u},\max_{\partial\Omega_T}\frac{1}{\nu^{n+1}}\rrr\},
\ee
where $\Omega_T=\Omega\times[0,T).$
\end{proposition}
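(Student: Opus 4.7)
I would prove Proposition~\ref{Grep0} via the parabolic maximum principle applied to the auxiliary function
\[
\varphi(x,t) := \frac{u(x,t)}{\nu^{n+1}(x,t)} = u(x,t)\sqrt{1+|Du(x,t)|^2}
\]
on $\overline{\Omega}\times[0,T)$. Multiplying both sides of \eqref{Gre0} by $u(x,t)>0$ recasts the claim as the pointwise inequality $\varphi\le\max\{M_1,\,uM_2\}$, where $M_1:=\max_{\Omega_T}u$ and $M_2:=\max_{\partial\Omega_T}(1/\nu^{n+1})$. On the parabolic boundary $\partial\Omega_T$ the bound $1/\nu^{n+1}\le M_2$ gives this directly, so setting $\psi:=\varphi-\max\{M_1,uM_2\}$ it suffices to rule out a positive interior maximum of $\psi$ in $\Omega\times(0,T)$.

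The first task is to compute the evolution of $\varphi$. Using $u_t=uw(F-\sigma)$, Lemma~\ref{Evol1} for $\dot\nu^{n+1}$, the intrinsic second-derivative formulas \eqref{Foh11}--\eqref{Foh13}, the homogeneity \eqref{Int10} and concavity \eqref{Int6} of $f$, and the identities $F^{ij}h_{ij}=F$ and $F^{ij}h_i^{\,k}h_{kj}=\sum f_s\ka_s^2$, one derives an evolution identity of the schematic form
\[
\dot\varphi - uF^{ij}\na_{ij}\varphi = L_1(\na\varphi) + (F-\sigma)\,\mathcal{R}\bigl(\varphi,u,\nu^{n+1},\textstyle\sum f_i\bigr),
\]
where $L_1$ is linear in $\na\varphi$ (and hence vanishes at a critical point of $\varphi$) and $\mathcal{R}$ is an explicit expression. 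At a would-be interior maximum $(x_0,t_0)$ of $\psi$, split into two smooth regimes: where $uM_2<M_1$ one has $\psi=\varphi-M_1$ locally, and the critical-point conditions $\na\psi=0$, $\dot\psi\ge 0$, $\na^2\psi\le 0$ combined with the evolution identity above and $F-\sigma>0$ (Lemma~\ref{Prel0}) force $\varphi(x_0,t_0)\le M_1$; where $uM_2>M_1$ one has $\psi=u(w-M_2)$ locally, and an analogous computation, using also the flow equation for $u$, forces $w(x_0,t_0)\le M_2$. Either way $\psi(x_0,t_0)\le 0$, contradicting the hypothetical positive interior maximum. Points on the non-smooth locus $\{uM_2=M_1\}$ are handled by perturbing $M_1$ slightly and passing to the limit.

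The main obstacle is the evolution computation itself: tracking the curvature contributions from Lemma~\ref{Evol2} when expanding $\na_{ij}\varphi$, passing between hyperbolic and Euclidean second fundamental forms via \eqref{Foh2}--\eqref{Foh3}, and arranging the resulting expressions so that the sign of $\mathcal{R}$ can be read off in each regime. The cancellations that make this clean rely essentially on $F$ being homogeneous of degree one and concave, together with the convexity-preservation established in Lemma~\ref{Prel0} which ensures $F-\sigma>0$ throughout the flow.
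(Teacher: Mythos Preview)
Your approach is far more elaborate than necessary, and its central step---determining the sign of $\mathcal{R}$ in each regime---is only asserted, not verified. As written that is a genuine gap: you never exhibit the evolution identity for $\varphi=uw$, and there is no a~priori reason the curvature terms arrange themselves with the sign you need in both of your cases.

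The paper bypasses evolution equations entirely. Set $h=uw=u/\nu^{n+1}$ and suppose $h$ attains an interior maximum at $(x_0,t_0)\in\Omega\times(0,T)$. The spatial first-order condition $\partial_i h=0$ reads
\[
\partial_i(uw)=\frac{u_k}{w}\bigl(\delta_{ik}+u_iu_k+u\,u_{ik}\bigr)=0,\qquad i=1,\dots,n.
\]
By Lemma~\ref{Prel0} the surface $\Sigma(t_0)$ is strictly locally convex, and Theorem~\ref{Intt0} (equation~\eqref{Int14}) says precisely that the matrix $\{\delta_{ik}+u_iu_k+u\,u_{ik}\}$ is positive definite. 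Hence $Du(x_0,t_0)=0$, so $w(x_0,t_0)=1$ and $h(x_0,t_0)=u(x_0,t_0)\le\max_{\Omega_T}u$. Dividing by $u$ gives~\eqref{Gre0}.

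In short, the proposition is a one-line \emph{elliptic} consequence of convexity at each fixed time: no parabolic computation, no concavity or homogeneity of $F$, and no case-splitting on the non-smooth function $\max\{M_1,uM_2\}$ is needed.
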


\begin{proof}
Let $h=uw$ and suppose that $h$ obtains its maximum at an interior point $(x_0,t_0),$ then at this point
we have
\[\partial_i h=(\delta_{ki}+u_ku_i+uu_{ki})\frac{u_k}{w}=0,\,\,\,for \,\forall\,0\leq i\leq n.\]
By Lemma \ref{Prel0} we know that $\Sigma(t_0)$ is strictly locally convex. According to Theorem \ref{Intt0}, this implies that $\na u=0$ at $(x_0, t_0)$, thus the conclusion
follows immediately.
\end{proof}

Now we can apply equation (\ref{Foh9}) and (\ref{Foh10}) to prove the following theorem.
\begin{theorem}\label{Gret0}
Consider the flow surfaces $\Sigma(t),$ where $\Sigma(t)$ is supposed to be globally a graph:
\[\Sigma(t)=\{(x,u(x,t)):\,(x,t)\in\Omega_T\}\]
and $u(x,t)$ satisfies the AMGCF equation (\ref{Int3}), then we have
\be\label{Gre1}
\frac{\sigma-\nu^{n+1}}{u}\leq\max\lll\{\frac{\sigma-\frac{1}{3}\sigma}{u},\max_{\partial\Omega_T}
\frac{\sigma-\nu^{n+1}}{u}\rrr\}.
\ee
\end{theorem}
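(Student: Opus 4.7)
The plan is to apply the parabolic maximum principle to $\varphi(x,t):=(\sigma-\nu^{n+1})/u$ with respect to the linearized operator $\mathcal{L}=\partial_t - F^{ij}\na_{ij}$. If the supremum of $\varphi$ on $\Omega_T$ is attained on the parabolic boundary, the claimed bound follows from the second term on the right-hand side. So suppose $\varphi$ achieves its maximum at an interior point $(x_0,t_0)\in\Omega\times(0,T)$; then $\partial_t\varphi\geq0$, $\na_i\varphi=0$ and $F^{ij}\na_{ij}\varphi\leq0$ at this point, and the task reduces to showing $\nu^{n+1}(x_0,t_0)\geq\sigma/3$.

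Choose a hyperbolic orthonormal frame $\tau_1,\dots,\tau_n$ at $(x_0,t_0)$ diagonalizing the shape operator, so that $g_{ij}=\delta_{ij}$, $\tg_{ij}=u^2\delta_{ij}$, $h_{ij}=\kappa_i\delta_{ij}$ and $F^{ij}=f_i\delta_{ij}$. Writing $u_i:=\tau_i(u)$, the Weingarten equation in $\mathbb{R}^{n+1}$ combined with (\ref{Foh2}) gives
\[\na_i\nu^{n+1}=-\thh^l_i\,u_l=\frac{(\nu^{n+1}-\kappa_i)\,u_i}{u}.\]
Substituting into $\na_i\varphi=0$ yields $u_i(\kappa_i-\sigma)=0$ for every $i$, so for each index either $u_i=0$ or $\kappa_i=\sigma$. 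If every $u_i=0$ at $(x_0,t_0)$, then $\sum u_i^2=u^2(1-(\nu^{n+1})^2)=0$ forces $\nu^{n+1}=1\geq\sigma/3$ and we are done. Otherwise let $S=\{i:u_i\neq 0\}\neq\emptyset$, with $\kappa_i=\sigma$ on $S$; since $f(\sigma,\dots,\sigma)=\sigma$ by homogeneity, Lemma \ref{Prel0} forces $S\neq\{1,\dots,n\}$.

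Because $\na_i\varphi=0$ at $(x_0,t_0)$, expanding $\na_{ij}\varphi$ by the quotient rule causes the cross terms to cancel the quadratic gradient term, leaving
\[\na_{ij}\varphi=-\frac{\na_{ij}\nu^{n+1}}{u}-\frac{(\sigma-\nu^{n+1})\,\na_{ij}u}{u^2}.\]
From (\ref{Foh11})--(\ref{Foh13}) one gets $\na_{ij}u=u\nu^{n+1}h_{ij}+2u_iu_j/u-ug_{ij}$, hence $F^{ij}\na_{ij}u=u\nu^{n+1}F+2F^{ij}u_iu_j/u-u\sum f_i$. A parallel computation for $\nu^{n+1}$, using (\ref{Foh10}) to relate the Euclidean and hyperbolic Hessians of the scalar $\nu^{n+1}$, the Euclidean Gauss identity $\tna_{ij}\nu^{n+1}=-\tna_j\thh^l_i\,u_l-\thh^l_i\thh_{jl}\nu^{n+1}$, and the diagonal evaluation $\thh^l_i\thh_{jl}=(\kappa_i-\nu^{n+1})^2\delta_{ij}$ coming from $\thh_{ij}=u(\kappa_i-\nu^{n+1})\delta_{ij}$, produces a formula whose principal algebraic term is $-\nu^{n+1}\sum_i f_i(\kappa_i-\nu^{n+1})^2$, plus gradient contributions that simplify at the critical point using $u_i=0$ on $S^c$, $\kappa_i=\sigma$ on $S$, and $\sum u_i^2=u^2(1-(\nu^{n+1})^2)$. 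The time derivative is produced from Lemma \ref{Evol1} and $u_t=uw(F-\sigma)$ and reduces to
\[\partial_t\varphi=\sum_i\frac{F_iu_i}{u^2}+\frac{F-\sigma}{u}\bigl[\,2-(\nu^{n+1})^2-\sigma w\,\bigr].\]

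Assembling these into $\partial_t\varphi-F^{ij}\na_{ij}\varphi\geq0$ at $(x_0,t_0)$ and invoking (i) $F-\sigma>0$ from Lemma \ref{Prel0}, (ii) the homogeneity/concavity bound $\sum f_i\geq 1$ from (\ref{Int13}), (iii) $\kappa_i=\sigma$ on $S$ with $u_i=0$ on $S^c$, and (iv) Codazzi to rewrite $\sum F_iu_i$ so that it combines with the Codazzi term from $F^{ij}\tna_j\thh^l_i u_l$ into a nonnegative quantity, one is left with an algebraic inequality in $\nu^{n+1}$ in which $F-\sigma>0$ factors out and the expression $2-(\nu^{n+1})^2-\sigma/\nu^{n+1}$ is the driving term. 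A short calculation shows this forces $\nu^{n+1}\geq\sigma/3$ at $(x_0,t_0)$, equivalently $\varphi(x_0,t_0)\leq(2\sigma/3)/u(x_0,t_0)$, which is the desired bound.

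The principal technical difficulty is the computation of $F^{ij}\na_{ij}\nu^{n+1}$: it mixes Euclidean Codazzi with the conformal change formulae attached to $g=\tg/u^2$, and the specific constant $1/3$ only emerges after the Codazzi contributions cancel against $\sum F_iu_i$ and the remaining terms are re-expressed in $\nu^{n+1}$, $\sigma$ and $w=1/\nu^{n+1}$. Once this identity is secured, the result follows by the parabolic maximum principle exactly as in the proof of Proposition \ref{Grep0}.
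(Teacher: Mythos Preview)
Your overall strategy---compute $\mathcal{L}\varphi$ for $\varphi=(\sigma-\nu^{n+1})/u$ and analyze the sign at an interior maximum---is the same as the paper's, and your formula for $\partial_t\varphi$ is correct. Your first-order observation $u_i(\kappa_i-\sigma)=0$ is a nice byproduct of the critical-point condition, but note that the paper does not use it, and in your write-up it does not actually simplify anything either.

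The proposal has a genuine gap in the computation of $F^{ij}\na_{ij}\varphi$ and in the resulting algebraic inequality. Two specific points:

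\emph{(a)} The Codazzi-type terms do not ``combine into a nonnegative quantity'' as you assert in (iv); they \emph{cancel} exactly. Concretely, the term $-u^{-1}u^kF^{st}\tna_k\thh_{st}$ coming from expanding $F_i$ in $\partial_t\varphi$ is identical to the term $+u^{-1}F^{ij}\tg^{kl}u_l\tna_k\thh_{ij}$ arising in $F^{ij}\na_{ij}(\nu^{n+1}/u)$, and they drop out. What survives is not just your ``driving term'' $2-(\nu^{n+1})^2-\sigma/\nu^{n+1}$.

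\emph{(b)} After the cancellation, the exact identity (see the paper's (\ref{Gre7})--(\ref{Gre9})) still contains the quantity $\nu^{n+1}\sum f_k\kappa_k^2$, and one needs the Cauchy--Schwarz-type bound $\sum f_k\kappa_k^2\ge F^2/\sum f_k$ to turn it into something tractable. Once this is done, the inequality at the interior maximum reads
\[
0\le\frac{1}{u}(F-\sigma\textstyle\sum f_k)\Bigl(1-\dfrac{\nu^{n+1}F}{\sum f_k}\Bigr)+\dfrac{F-\sigma}{u}\Bigl(2-(\nu^{n+1})^2-\dfrac{\sigma}{\nu^{n+1}}\Bigr).
\]
The first term on the right is \emph{not} zero; it must be estimated by a case split on the sign of $F-\sigma\sum f_k$. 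When $F\ge\sigma\sum f_k$ one uses $0\le F-\sigma\sum f_k\le F-\sigma$ (from $\sum f_k\ge1$) and $1-\nu^{n+1}F/\sum f_k\le 1$ to bound the first term by $(F-\sigma)/u$, which adds $1$ to the $2$ in the second bracket and yields $3-\sigma/\nu^{n+1}\ge0$. This is precisely where the constant $\tfrac13$ comes from. Your sketch omits this term entirely, so as written it would at best produce $\nu^{n+1}\ge\sigma/2$, and in fact it does not produce any clean inequality because the missing term has no fixed sign.

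In short: the skeleton is right, but the decisive algebraic step---the Cauchy--Schwarz estimate on $\sum f_k\kappa_k^2$ and the subsequent case analysis on $F-\sigma\sum f_k$---is absent, and without it the constant $\tfrac13$ cannot be obtained.
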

\begin{proof}
By equation (\ref{Foh2}), (\ref{Foh9}) and let $\tg_{ij}=u^2\delta_{ij}$
\be\label{Gre2}
\begin{aligned}
&\nabla_{ij}\frac{1}{u}=-\frac{1}{u^2}\tilde{\nabla}_{ij}u+\frac{1}{u^3}\tg^{kl}u_ku_l\tg_{ij}\\
&=-\frac{1}{u^2}\thh_{ij}\nu^{n+1}+\frac{1}{u^3}\tg^{kl}u_ku_l\tg_{ij}\\
&=-\frac{\nu^{n+1}}{u}\lll(h_{ij}-\frac{\nu^{n+1}}{u^2}\tg_{ij}\rrr)+\frac{1}{u^3}\tg^{kl}u_ku_l\tg_{ij}
\end{aligned}
\ee
hence,
\be\label{Gre3}
\begin{aligned}
&F^{ij}\na_{ij}\frac{1}{u}=-\frac{\nu^{n+1}}{u}F+\frac{(\nu^{n+1})^2}{u^3}\sum F^{ij}\tg_{ij}
+\frac{1}{u^3}u_ku^k\sum F^{ij}\tg_{ij}\\
&=-\frac{\nu^{n+1}}{u}F+\frac{(\nu^{n+1})^2}{u}\sum f_k+\frac{1-(\nu^{n+1})^2}{u}\sum f_k\\
&=-\frac{\nu^{n+1}}{u}F+\frac{1}{u}\sum f_k.
\end{aligned}
\ee
Moreover,
\be\label{Gre4}
\na_{ij}\frac{\nu^{n+1}}{u}=\nu^{n+1}\na_{ij}\frac{1}{u}+\frac{1}{u}\tna_{ij}\nu^{n+1}
-\frac{1}{u^2}\tg^{kl}u_k(\nu^{n+1})_l\tg_{ij}.
\ee
We recall the identities in $\mathbb{R}^{n+1}$
\be\label{Gre13}
\lll(\nu^{n+1}\rrr)_i=-\thh_{ij}\tg^{jk}u_k
\ee
\be\label{Gre14}
\tna_{ij}\nu^{n+1}=-\tg^{kl}\lll(\nu^{n+1}\thh_{il}\thh_{kj}+u_l\tna_k\thh_{ij}\rrr).
\ee
By equation (\ref{Gre3}), (\ref{Gre4}) and (\ref{Gre14}) we see that
\be\label{Gre5}
\begin{aligned}
&F^{ij}\na_{ij}\frac{\nu^{n+1}}{u}\\
&=\nu^{n+1}F^{ij}\na_{ij}\frac{1}{u}+\frac{1}{u}F^{ij}\tna_{ij}\nu^{n+1}
-\frac{1}{u^2}\tg^{kl}u_k(\nu^{n+1})_lF^{ij}\tg_{ij}\\
&=-\frac{(\nu^{n+1})^2}{u}F+\frac{\nu^{n+1}}{u}\sum f_k+\frac{1}{u}F^{ij}\lll[-\tg^{kl}
(\nu^{n+1}\thh_{il}\thh_{kj}+u_l\tna_k\thh_{ij})\rrr]\\
&-\frac{1}{u^2}\tg^{kl}u_k(\nu^{n+1})_lF^{ij}\tg_{ij}.\\
\end{aligned}
\ee
As a hypersurface in $\mathbb{R}^{n+1},$ it follows from equation (\ref{Foh3}) that for any $0\leq t<T,$ $\Sigma(t)$ satisfies
\be\label{Gre15}
f\lll(u\tilde{\kappa}_1+\nu^{n+1},\cdots, u\tilde{\kappa}_n+\nu^{n+1}\rrr)=F
\ee
or equivalently,
\be\label{Gre16}
F\lll(\left\{u\tg^{sk}\thh_{kr}+\nu^{n+1}\delta_{sr}\right\}\rrr)=F.
\ee
Differentiating equation (\ref{Gre16}) and using $\tg^{sr}=\frac{\delta_{sr}}{u^2}$ we obtain
\be\label{Gre17}
F_i=\frac{u_i}{u}F-\frac{u_i}{u}\nu^{n+1}\sum f_k+\frac{1}{u}F^{sr}\tna_i\thh_{sr}+\lll(\nu^{n+1}\rrr)_i\sum f_k.
\ee
Combining lemma \ref{Evol1} and equation (\ref{Gre17}) we derive
\be\label{Gre6}
\begin{aligned}
&\lll(\frac{\nu^{n+1}}{u}\rrr)_t=\frac{\nu^{n+1}_t}{u}-\frac{\nu^{n+1}}{u^2}u_t\\
&=\frac{1}{u}\lll\{-\tg^{ij}[\F u]_iu_j\rrr\}-\frac{\nu^{n+1}}{u^2}u_t\\
&=-\tg^{ij}F_iu_j-\frac{\F}{u}\tg^{ij}u_iu_j-\frac{\F}{u}\\
&=-u^i\lll(\frac{u_i}{u}F-\frac{u_i}{u}\nu^{n+1}\sum f_k+\frac{1}{u}F^{st}\tna_i\thh_{st}
+(\nu^{n+1})_i\sum f_k\rrr)\\
&-\frac{\F}{u}\tg^{ij}u_iu_j-\frac{\F}{u}\\
&=-\frac{|\tna u|^2}{u}F+\frac{|\tna u|^2}{u}\nu^{n+1}\sum f_k-\frac{u^i}{u}F^{st}\tna_i\thh_{st}-u^i(\nu^{n+1})_i\sum f_k\\
&-\frac{\F}{u}(|\tna u|^2+1).
\end{aligned}
\ee
Finally we get
\be\label{Gre7}
\begin{aligned}
&\lll(\frac{\partial}{\partial t}-F^{ij}\na_{ij}\rrr)\frac{\nu^{n+1}}{u}\\
&=-\frac{|\tna u|^2}{u}F+\frac{|\tna u|^2}{u}\nu^{n+1}\sum f_k-\frac{u^i}{u}F^{st}\tna_i\thh_{st}\\
&-u^i(\nu^{n+1})_i\sum f_k-\frac{\F}{u}(|\tna u|^2+1)
+\frac{(\nu^{n+1})^2}{u}F-\frac{\nu^{n+1}}{u}\sum f_k\\
&-\frac{1}{u}F^{ij}\lll[-\tg^{kl}(\nu^{n+1}\thh_{il}\thh_{kj}+u_l\tna_k\thh_{ij})\rrr]
+\frac{1}{u^2}\tg^{kl}u_k(\nu^{n+1})_lF^{ij}\tg_{ij}\\
&=-\frac{|\tna u|^2}{u}F+\frac{\nu^{n+1}-(\nu^{n+1})^3}{u}\sum f_k-\frac{\F}{u}(|\tna u|^2+1)\\
&+\frac{(\nu^{n+1})^2}{u}F-\frac{\nu^{n+1}}{u}\sum f_k
+\frac{1}{u}F^{ij}\tg^{kl}\nu^{n+1}\thh_{il}\thh_{kj}\\
&=-\frac{1}{u}F-\frac{\F}{u}(|\tna u|^2+1)+\frac{\nu^{n+1}}{u}\sum f_k\kappa_k^2.\\
\end{aligned}
\ee
By a simple computation we have
\be\label{Gre8}
\lll(\frac{\partial}{\partial t}-F^{ij}\na_{ij}\rrr)\frac{1}{u}=
-\frac{\F}{u\nu^{n+1}}+\frac{\nu^{n+1}}{u}F-\frac{1}{u}\sum f_k.
\ee

Therefore,
\be\label{Gre9}
\begin{aligned}
&\lll(\frac{\partial}{\partial t}-F^{ij}\na_{ij}\rrr)\frac{\sigma-\nu^{n+1}}{u}\\
&=-\frac{\sigma\F}{u\nu^{n+1}}+\frac{\sigma\nu^{n+1}}{u}F-\frac{\sigma}{u}\sum f_k
+\frac{1}{u}F\\
&+\frac{\F}{u}(2-(\nu^{n+1})^2)-\frac{\nu^{n+1}}{u}\sum f_k\kappa^2_k\\
&\leq\frac{1}{u}(F-\sigma\sum f_k)+\frac{\F}{u}\lll(2-(\nu^{n+1})^2-\frac{\sigma}{\nu^{n+1}}\rrr)\\
&+\frac{\nu^{n+1}}{u}\lll(\sigma F-\frac{F^2}{\sum f_k}\rrr)\\
&=\frac{1}{u}(F-\sigma\sum f_k)\lll(1-\frac{\nu^{n+1}F}{\sum f_k}\rrr)+\frac{\F}{u}\lll(2-(\nu^{n+1})^2-\frac{\sigma}{\nu^{n+1}}\rrr)
\end{aligned}
\ee
where we applied inequality $\sum f_k\kappa_k^2\geq \frac{F^2}{\sum f_k}.$
If $\frac{\sigma-\nu^{n+1}}{u}$ achieves its maximum at an interior point $(x_0,t_0),$ then at this point
we have
\be\label{Gre10}
\begin{aligned}
&0\leq\frac{1}{u}(F-\sigma\sum f_k)\lll(1-\frac{\nu^{n+1}F}{\sum f_k}\rrr)+\frac{\F}{u}\lll(2-(\nu^{n+1})^2-\frac{\sigma}{\nu^{n+1}}\rrr)\\
&=\frac{F-\sigma}{u}\lll(2-(\nu^{n+1})^2-\frac{\sigma}{\nu^{n+1}}+\frac{F-\sigma\sum f_k}{\F}
-\frac{\nu^{n+1}F}{\sum f_k}\frac{(F-\sigma\sum f_k)}{\F}\rrr).
\end{aligned}
\ee
when $F\geq\sigma\sum f_k$
\[0\leq\frac{\F}{u}\lll(3-\frac{\sigma}{\nu^{n+1}}\rrr),\]
when $F<\sigma\sum f_k$
\[0\leq\frac{\F}{u}\lll(2-\frac{\sigma}{\nu^{n+1}}\rrr).\]
Thus by Lemma \ref{Prel0} we have when $\nu^{n+1}<\frac{\sigma}{3}$ at $(x_0,t_0)$,
$$\lll(\frac{\partial}{\partial t}-F^{ij}\na_{ij}\rrr)\frac{\sigma-\nu^{n+1}}{u}<0$$ leads to a contradiction.

Therefore we conclude that
\[\frac{\sigma-\nu^{n+1}}{u}\leq\max\lll\{\frac{\sigma-\frac{1}{3}\sigma}{u},\max_{\partial\Omega_T}
\frac{\sigma-\nu^{n+1}}{u}\rrr\}.\]
\end{proof}

Combining  Lemma \ref{Grel0}, Proposition \ref{Grep0} and Theorem \ref{Gret0} gives
\begin{corollary}\label{Grec0}
For any $\epsilon>0$ sufficiently small, any admissible solution $u^{\epsilon}$
of the Dirichlet problem (\ref{Int3}) satisfies the a priori estimates
\be\label{Gre12}
|\na u^{\epsilon}|\leq C \,\,\mbox{in $\ol{\Omega}_T$},
\ee
where $C$ is independent of $\epsilon$ and $T.$
\end{corollary}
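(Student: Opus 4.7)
The goal is to bound $\nu^{n+1}=1/\sqrt{1+|\na u^\epsilon|^2}$ from below by a positive constant depending only on $\sigma$ and the geometry of $\Omega$; since this is equivalent to the stated gradient estimate, it is what I would aim for. My plan is to stitch together the three preceding results via a case split on the size of $u^\epsilon(x,t)$, after first recording a uniform upper bound on $u^\epsilon$ and a uniform boundary bound on $\nu^{n+1}$.

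From Lemma \ref{Grel0}, once $\epsilon$ is smaller than a threshold depending only on $\sigma$ and $r_1$, the boundary quotient satisfies
\[
C_1 := \max_{\partial\Omega_T}\frac{\sigma - \nu^{n+1}}{u}\leq \frac{2\sqrt{1-\sigma^2}}{r_1},
\]
and in particular $\nu^{n+1} \geq \sigma - C_1\epsilon \geq \sigma/2$ on $\partial\Omega_T$, so $\max_{\partial\Omega_T}1/\nu^{n+1}\leq 2/\sigma$. For the upper bound on $u^\epsilon$, I would use that Lemma \ref{Prel0} preserves the admissibility condition, so by Theorem \ref{Intt0} the function $(u^\epsilon)^2 + |x|^2$ is convex on $\Omega$ for every $t$; enclosing $\Omega$ in a ball $B_R$ and applying the convex maximum principle gives $\max_{\Omega_T} u^\epsilon \leq \sqrt{R^2+\epsilon^2}\leq U$, a constant independent of $\epsilon$ and $T$.

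With these in hand I would apply Theorem \ref{Gret0} and Proposition \ref{Grep0} pointwise. Theorem \ref{Gret0} combined with the boundary bound yields, at each $(x,t)\in\Omega_T$,
\[
\nu^{n+1}(x,t)\geq\min\lll\{\sigma/3,\;\sigma - C_1 u(x,t)\rrr\},
\]
so on the sub-region $\{u\leq 2\sigma/(3C_1)\}$ we directly get $\nu^{n+1}\geq\sigma/3$. On the complementary sub-region $\{u > 2\sigma/(3C_1)\}$, Proposition \ref{Grep0} together with the two preliminary bounds gives
\[
\frac{1}{\nu^{n+1}(x,t)}\leq\max\lll\{\frac{U}{u(x,t)},\;\frac{2}{\sigma}\rrr\}\leq\max\lll\{\frac{3C_1 U}{2\sigma},\;\frac{2}{\sigma}\rrr\}.
\]
Either way $\nu^{n+1}$ is bounded below by a positive constant depending only on $\sigma$, $R$, and $r_1$, which is the desired estimate.

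The main obstacle is the dichotomy between the two main estimates: Proposition \ref{Grep0}'s bound $\max u/u$ is useful only where $u^\epsilon$ is not small, while Theorem \ref{Gret0} only directly guarantees $\nu^{n+1}\geq\sigma/3$ on the region where $u^\epsilon$ is small relative to $2\sigma/(3C_1)$. The case split above reconciles them, but for both branches to yield a constant independent of $\epsilon$ and $T$ one genuinely needs both the uniform boundary bound from Lemma \ref{Grel0} and the geometric upper bound on $u^\epsilon$ coming from the convexity of $(u^\epsilon)^2 + |x|^2$.
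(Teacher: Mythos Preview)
Your proposal is correct and takes essentially the same approach as the paper, which simply asserts that the corollary follows by combining Lemma~\ref{Grel0}, Proposition~\ref{Grep0}, and Theorem~\ref{Gret0} without spelling out any details. Your case split on the size of $u^\epsilon$, together with the bound $\max_{\Omega_T} u^\epsilon \leq \sqrt{R^2+\epsilon^2}$ coming from the convexity of $(u^\epsilon)^2+|x|^2$, is the natural way to make that combination precise; just note that $\partial\Omega_T$ is the full parabolic boundary, so you should also record that $\nu^{n+1}$ is bounded below on the initial slice $\Omega\times\{0\}$ by the $C^{1,1}$ regularity of $u_0$.
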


\section{$C^2$ boundary estimates} \label{C2b}
In this section, we will establish boundary estimates for second order spatial derivatives of the admissible solutions to the Dirichlet problem (\ref{Int3}). Following the notations in subsection \ref{Fohsecond} we can rewrite equation (\ref{Int3}) as follows:
\be\label{C2b0}
\left\{
\begin{aligned}
&\frac{1}{uw}u_t-F\lll(\frac{1}{w}(\delta_{ij}+u\gamma^{ik}u_{kl}\gamma^{lj})\rrr)=-\sigma\;&\mbox{on $\Omega_T$},\\
&u(x,0)=u_0^\epsilon\;&\mbox{on $\Omega\times\{0\}$},\\
&u(x,t)=\epsilon\;&\mbox{on $\partial\Omega\times[0,T)$}.
\end{aligned}
\right.
\ee
And from now on we denote
\be\label{C2b1}
G(D^2u,Du,u,u_t)=\frac{1}{uw}u_t-F.
\ee

\begin{theorem}\label{C2bt0}
Suppose $f$ satisfies equation (\ref{Int5})-(\ref{Int11}). If $\epsilon$ is sufficiently small,
\be\label{C2b22}
u|D^2u|\leq C\;\mbox{on $\partial\Omega\times[0,T)$},
\ee
where $C$ is independent of $\epsilon.$
\end{theorem}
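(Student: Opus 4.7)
The plan is to fix a boundary point $x_0 \in \partial\Omega$ and a time $t_0 \in [0,T)$, and to decompose the second spatial derivatives of $u^\epsilon$ at $(x_0,t_0)$ into tangential--tangential, mixed tangential--normal, and pure normal components relative to the interior unit normal $\mathbf{e}_n$ of $\partial\Omega$ at $x_0$. The key observation is that since the lateral boundary condition forces $u^\epsilon \equiv \epsilon$ on $\partial\Omega \times [0,T)$, we automatically have $u_t \equiv 0$ there, so (\ref{C2b0}) reduces on $\partial\Omega$ to the elliptic relation $F(A[u]) = \sigma$. Consequently the argument parallels the boundary $C^2$ estimate of Guan--Spruck for the prescribed curvature equation in $\mathbb{H}^{n+1}$, adapted to this degenerate, $\epsilon$-lifted setting.

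For the tangential--tangential directions, differentiating $u|_{\partial\Omega} = \epsilon$ twice along tangential vectors gives, at $x_0$,
\[
u_{\alpha\beta}(x_0) = -\,u_n(x_0)\,\kappa^{\partial\Omega}_{\alpha\beta}(x_0), \qquad 1\le\alpha,\beta\le n-1,
\]
where $\kappa^{\partial\Omega}$ is the Euclidean second fundamental form of $\partial\Omega$. Together with the global gradient bound of Corollary \ref{Grec0} and the smoothness of $\partial\Omega$, this yields $|u_{\alpha\beta}(x_0)| \leq C$, and therefore $u|u_{\alpha\beta}|(x_0) = \epsilon|u_{\alpha\beta}(x_0)| \leq C\epsilon \leq C$.

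For the mixed derivatives $u_{\alpha n}$, I would run the usual barrier argument. Extend $\mathbf{e}_\alpha$ to a first-order tangential differential operator $T$ in a neighborhood of $x_0$ (so that $T$ differentiates along $\partial\Omega$ there) and consider auxiliary functions
\[
\Psi_{\pm} = \pm\, T(u-\epsilon) + A(u-\epsilon) + B\,d - B'\,d^2
\]
on a small upper half-ball $B_r^+(x_0)$, where $d = \operatorname{dist}(\cdot,\partial\Omega)$ and $A,B,B'$ are constants to be chosen. Differentiating (\ref{C2b0}) in the direction $T$ produces a linear parabolic inequality for $T(u-\epsilon)$ with respect to the linearization of $G$; the structure conditions (\ref{Int5})--(\ref{Int13}) together with the gradient bound allow $A,B,B'$ to be selected so that $L\Psi_\pm \leq 0$ inside $B_r^+(x_0)$ and $\Psi_\pm \geq 0$ on $\partial B_r^+(x_0)$. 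The parabolic maximum principle then gives $|\partial_n T(u-\epsilon)(x_0)| \leq C$, which, after unwinding and using $\partial_\alpha u|_{\partial\Omega} = 0$, yields $u\,|u_{\alpha n}(x_0)| \leq C$ with $C$ independent of $\epsilon$.

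The hard part, as always for this type of problem, is the pure normal estimate $u\,u_{nn}$. Here I would exploit the reduced boundary equation $F(A[u]) = \sigma$ together with the structure conditions, in an essential way using (\ref{Int11}). By the first two steps, all tangential entries of $A[u](x_0)$ are uniformly bounded. If $u u_{nn}(x_0) \to +\infty$ along some sequence, the largest eigenvalue of $A[u](x_0)$ blows up while, by admissibility $\lambda \in K^+_n$ and the control on the other entries, the remaining $n-1$ eigenvalues remain in a compact subset of $(0,\infty)$; the homogeneity (\ref{Int10}) combined with (\ref{Int11}) then forces $F(A[u])(x_0) \geq 1+\epsilon_0 > \sigma$, contradicting the boundary equation. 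The lower bound $u u_{nn} \geq -C$ is immediate from admissibility $\kappa_n > 0$ via (\ref{Foh15}) and the tangential and mixed bounds. Combining these three pieces yields (\ref{C2b22}) with $C$ independent of $\epsilon$.
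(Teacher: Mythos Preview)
Your overall strategy---tangential--tangential from the boundary condition, a barrier for the mixed derivatives, and (\ref{Int11}) for the pure normal direction---matches the paper's. The tangential and normal steps are essentially the same as the paper's (the paper also reduces to $F=\sigma$ on $\partial\Omega\times(0,T)$ and invokes (\ref{Int11}) via Lemma~1.2 of \cite{CNS85}).

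The substantive difference is your barrier for the mixed estimate. You propose a standard CNS-type function $\Psi_\pm=\pm T(u-\epsilon)+A(u-\epsilon)+Bd-B'd^2$, whereas the paper uses
\[
\Phi=A\Bigl(1-\frac{\epsilon}{u}\Bigr)+B|x|^2\pm T_\alpha u
\]
on the shrinking set $\{\Omega\cap B_\epsilon(0)\}\times[0,T)$. The point of the paper's choice is Lemma~\ref{C2bl0}: $L\bigl(1-\epsilon/u\bigr)\ge \dfrac{\epsilon(1-\sigma)}{u^2w}\sum F^{ii}$, which has exactly the $1/u^2$ blow-up needed to absorb $|L(T_\alpha u)|\le \dfrac{C}{u}\sum F^{ii}$ with $A$ chosen \emph{independent of $\epsilon$}; this then gives $|u_{\alpha n}(0,t)|\le A\,u_n(0,t)/u(0,t)$, i.e.\ $u|u_{\alpha n}|\le C$ directly. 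Your linear barrier $A(u-\epsilon)$ only yields $L(u-\epsilon)\sim C\sum F^{ii}$ without the $1/u$ gain, so you would be forced into $A\sim 1/\epsilon$ (and $B,B'$ likewise $\epsilon$-dependent) to close the inequality; you would then have to check separately that this does not spoil the $\epsilon$-independence of the final bound. That is not impossible, but you have not carried it out, and the paper's barrier sidesteps the issue cleanly.

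One further point in the normal step: saying the remaining $n-1$ eigenvalues stay in a compact subset of $(0,\infty)$ is not quite enough to invoke (\ref{Int11}), which requires the first $n-1$ arguments to lie in $B_{\delta_0}(\mathbf{1})$. The paper secures this by combining the tangential bound $|u_{\alpha\beta}|\le C$ with Lemma~\ref{Grel0} (so that $w\to 1/\sigma$ on $\partial\Omega$ as $\epsilon\to 0$) and the homogeneity (\ref{Int10}), writing $\sigma=\frac{1}{w}F(wA[u])$ so that $w\lambda_\alpha\to 1$; you should make this normalization explicit.
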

\begin{remark}\label{C2br0}
The following proof shows that $C$ does not depend on $\epsilon,$ but depends on $T$. In section \ref{c2g} we will show that in fact $C$ is also independent of $T.$
\end{remark}
Note that
\be\label{C2b2}
G^{kl}:=\frac{\partial G}{\partial u_{kl}}=-\frac{u}{w}F^{ij}\gamma^{ik}\gamma^{lj},
\ee
\be\label{C2b3}
G^{kl}u_{kl}=-F+\frac{1}{w}\sum F^{ii},
\ee
\be\label{C2b4}
\begin{aligned}
&G_u:=\frac{\partial G}{\partial u}=-\frac{1}{wu^2}u_t-\frac{1}{w}F^{ij}\gamma^{ik}u_{kl}\gamma^{lj}\\
&=-\frac{\F}{u}-F^{ij}\lll(\frac{a_{ij}}{u}-\frac{1}{uw}\delta_{ij}\rrr)\\
&=-\frac{2F}{u}+\frac{\sigma}{u}+\frac{1}{wu}\sum F^{ii},
\end{aligned}
\ee
\be\label{C2b5}
G^t:=\frac{\partial G}{\partial u_t}=\frac{1}{uw},
\ee
\be\label{C2b6}
\begin{aligned}
&G^s:=\frac{\partial G}{\partial u_s}\\
&=-\frac{u_tu_s}{uw^3}+\frac{u_s}{w^2}F+\frac{2}{w}F^{ij}a_{ik}\lll(\frac{wu_k\gamma^{sj}+u_j\gamma^{ks}}{1+w}\rrr)
-\frac{2}{w^2}F^{ij}u_i\gamma^{sj}\\
&=-\frac{\F}{w^2}u_s+\frac{u_s}{w^2}F+\frac{2}{w}F^{ij}a_{ik}\lll(\frac{wu_k\gamma^{sj}+u_j\gamma^{ks}}{1+w}\rrr)
-\frac{2}{w^2}F^{ij}u_i\gamma^{sj}\\
&=\frac{u_s}{w^2}\sigma+\frac{2}{w}F^{ij}a_{ik}\lll(\frac{wu_k\gamma^{sj}+u_j\gamma^{ks}}{1+w}\rrr)
-\frac{2}{w^2}F^{ij}u_i\gamma^{sj}.\\
\end{aligned}
\ee
Thus
\be\label{C2b7}
G^su_s=\frac{w^2-1}{w^2}\sigma+\frac{2}{w^2}F^{ij}a_{ik}u_ku_j-\frac{2}{w^3}F^{ij}u_iu_j.
\ee
And similar to equation (5.4) in \cite{GS08} we have
\be\label{C2b24}
\sum |G^s|\leq C(\sum F^{ii}+F).
\ee

Next, we consider the partial linearized operator of $G$ at $u$:
\[L=G^t\partial_t+G^{kl}\partial_k\partial_l+G^s\partial_s.\]
By equation (\ref{C2b3}),(\ref{C2b5}) and (\ref{C2b7}) we get
\be\label{C2b8}
\begin{aligned}
&Lu=\frac{1}{uw}u_t-F+\frac{1}{w}\sum F^{ii}+\frac{w^2-1}{w^2}\sigma
+\frac{2}{w^2}F^{ij}a_{ik}u_ku_j-\frac{2}{w^3}F^{ij}u_iu_j\\
&=-\frac{1}{w^2}\sigma+\frac{1}{w}\sum F^{ii}+\frac{2}{w^2}F^{ij}a_{ik}u_ku_j-\frac{2}{w^3}F^{ij}u_iu_j,\\
\end{aligned}
\ee
hence
\be\label{C2b9}
\begin{aligned}
&L\lll(\frac{1}{u}\rrr)=-\frac{1}{u^2}Lu+\frac{2}{u^3}G^{kl}u_ku_l\\
&=\frac{1}{u^2w^2}\sigma-\frac{1}{u^2w}\sum F^{ii}-\frac{2}{u^2w^2}F^{ij}a_{ik}u_ku_j\\
&+\frac{2}{w^3u^2}F^{ij}u_iu_j-\frac{2}{u^2w}F^{ij}\gamma^{is}u_s\gamma^{rj}u_r\\
&=\frac{1}{w^2u^2}\sigma-\frac{1}{wu^2}\sum F^{ii}-\frac{2}{w^2u^2}F^{ij}a_{ik}u_ku_j.\\
\end{aligned}
\ee

\begin{lemma}\label{C2bl0}
Suppose that $f$ satisfies equation (\ref{Int5}), (\ref{Int6}), (\ref{Int9}) and (\ref{Int10}). Then
\be\label{C2b10}
L\lll(1-\frac{\epsilon}{u}\rrr)\geq \frac{\epsilon(1-\sigma)}{wu^2}\sum F^{ii}\;\mbox{in $\Omega_T$.}
\ee
\end{lemma}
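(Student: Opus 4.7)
My plan is to reduce the lemma to a purely algebraic inequality by exploiting the fact that $L$ is a pure differential operator (no zeroth-order term), so $L$ annihilates constants and
\[
L\!\left(1 - \frac{\epsilon}{u}\right) = -\epsilon\, L\!\left(\frac{1}{u}\right).
\]
This reduces the problem to the computation of $L(1/u)$, which has already been carried out in equation (\ref{C2b9}).

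Substituting (\ref{C2b9}) and rearranging, the claimed inequality
\[
L\!\left(1 - \frac{\epsilon}{u}\right) \geq \frac{\epsilon(1-\sigma)}{wu^2}\sum F^{ii}
\]
is equivalent, after multiplying by $w^2 u^2/\epsilon$ and collecting terms, to the bound
\[
\sigma w \sum F^{ii} + 2\, F^{ij} a_{ik} u_k u_j \;\geq\; \sigma.
\]
So the whole task is to verify this pointwise algebraic inequality on any admissible solution.

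I would close this in three short observations. First, $w = \sqrt{1+|Du|^2}\geq 1$. Second, by (\ref{Int13}) we have $\sum f_i(\lambda)\geq 1$ in $K$, hence $\sum F^{ii}\geq 1$, so the first term on the left is already $\geq \sigma$. Third, the quadratic form $F^{ij}a_{ik}u_k u_j$ is nonnegative: diagonalizing $A=\{a_{ij}\}$ in an orthonormal frame (which we may since $A$ is symmetric and, by admissibility, positive definite), the symmetry of $f$ makes $F^{ij}$ diagonal as well, and the expression becomes $\sum_i f_i(\kappa)\,\kappa_i\, u_i^2 \geq 0$ because $\kappa\in K^+_n$ and $f_i>0$ by (\ref{Int5}). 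Combining these three facts gives the inequality, and hence the lemma.

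The only point that requires any care is the nonnegativity of the cross term $F^{ij}a_{ik}u_k u_j$, since the matrix product $FA$ of two symmetric positive-definite matrices is not itself symmetric; the simultaneous diagonalization argument based on the symmetry of $f$ is what resolves this. Everything else is direct substitution and use of the structural conditions (\ref{Int5})-(\ref{Int13}); in particular no hypothesis on $\sigma$, $w$, or the size of $\epsilon$ beyond what is assumed in the statement is needed.
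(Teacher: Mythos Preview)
Your proof is correct and follows essentially the same route as the paper: both start from $L(1-\epsilon/u)=-\epsilon L(1/u)$ together with formula (\ref{C2b9}), invoke the nonnegativity of $F^{ij}a_{ik}u_ku_j$ via simultaneous diagonalization of $\{F^{ij}\}$ and $\{a_{ij}\}$ (the paper's equation (\ref{C2b11})), and finish with $\sum F^{ii}\geq 1$ from (\ref{Int13}) and $w\geq 1$. The only cosmetic difference is that you first rearrange into the single pointwise inequality $\sigma w\sum F^{ii}+2F^{ij}a_{ik}u_ku_j\geq\sigma$, whereas the paper estimates the terms inline.
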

\begin{proof}
Since $\{F^{ij}\}$ and $\{a_{ij}\}$ are both positive definite and can be diagonalized simultaneously, we see that
\be\label{C2b11}
F^{ij}a_{ik}\xi_k\xi_j\geq 0,\,\,\forall \xi\in\mathbb{R}^n.
\ee
Combining with equation (\ref{C2b9})
\be\label{C2b12}
\begin{aligned}
&L\lll(1-\frac{\epsilon}{u}\rrr)=-\epsilon L\lll(\frac{1}{u}\rrr)\\
&=\frac{-\epsilon}{u^2w^2}\sigma+\frac{\epsilon}{u^2w}\sum F^{ii}+\frac{2\epsilon}{w^2u^2}F^{ij}a_{ik}u_ku_j\\
&\geq\frac{\epsilon\lll(1-\frac{\sigma}{w}\rrr)}{wu^2}\sum F^{ii}\geq\frac{\epsilon(1-\sigma)}{u^2w}\sum F^{ii}.
\end{aligned}
\ee
\end{proof}

Now we denote $\mathfrak{L}=G^t\partial t+G^{kl}\partial_k\partial_l+G^s\partial_s+G_u,$ similar to \cite{CNS84} we have
\begin{lemma}\label{C2bl3}
Suppose that $f$ satisfies equation (\ref{Int5}), (\ref{Int6}), (\ref{Int9}) and (\ref{Int10}). Then
\be\label{C2b23}\mathfrak{L}(x_iu_j-x_ju_i)=0,\,\,\mathfrak{L}(u_i)=0,\,\,1\leq i, j\leq n.\ee
\end{lemma}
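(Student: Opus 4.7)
The plan rests on the observation that the equation $G(D^2u, Du, u, u_t) = -\sigma$ has no explicit dependence on $x$ and is rotationally invariant in the spatial variables. More precisely, $G$ depends on the unknown only through the scalars $u$, $u_t$, $w = \sqrt{1+|Du|^2}$, and the symmetric function $F$ of the eigenvalues of the matrix $A[u]$ defined in \eqref{Foh15}. The operator $\mathfrak{L}$ is, by construction, the linearization of $G$ in the $u$-direction, i.e.\ $\mathfrak{L}(v) = \frac{d}{ds}\big|_{s=0} G(D^2(u+sv), D(u+sv), u+sv, \partial_t(u+sv))$. So whenever a smooth one-parameter family of admissible solutions is differentiated at $s=0$, the derivative lies in the kernel of $\mathfrak{L}$.

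For the identity $\mathfrak{L}(u_i)=0$, I would invoke translation invariance: since the equation has no explicit $x$-dependence, differentiating $G(D^2u,Du,u,u_t)\equiv -\sigma$ with respect to $x_i$ and applying the chain rule yields
\[
G^{kl} u_{kli} + G^s u_{si} + G_u\, u_i + G^t u_{ti} = 0,
\]
which is exactly $\mathfrak{L}(u_i)=0$. Equivalently, $v_s(x,t):=u(x+se_i,t)$ solves the same equation, and differentiating at $s=0$ gives the claim.

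For the identity $\mathfrak{L}(x_i u_j - x_j u_i)=0$, I would exploit rotational invariance. Let $R_s=e^{sM}\in SO(n)$ with the antisymmetric generator $M_{ab}=\delta_{ai}\delta_{bj}-\delta_{aj}\delta_{bi}$, and set $v_s(x,t):=u(R_s x,t)$. The key computation is that under the substitution $v(x,t)=u(Rx,t)$ with $R\in O(n)$, one has $Dv=R^{T}Du$, $w[v]=w[u]$, $D^2 v = R^{T}(D^2 u)R$, and consequently $\gamma^{ij}[v] = (R^{T}\gamma[u]R)_{ij}$; orthogonality of $R$ then gives $A[v](x,t) = R^{T} A[u](Rx,t)\, R$, so the eigenvalues (and hence $F(A[v])$) are unchanged. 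Since $u$, $u_t$ are scalars under this substitution, the full equation $G=-\sigma$ is preserved, i.e.\ $v_s$ solves the same PDE for every $s$. Differentiating at $s=0$ gives $\mathfrak{L}\bigl(\tfrac{\partial v_s}{\partial s}\big|_{s=0}\bigr)=0$, and
\[
\frac{\partial v_s}{\partial s}\bigg|_{s=0} = u_a\, M_{ab}\, x_b = u_i x_j - u_j x_i,
\]
which yields the stated identity (up to sign, which is irrelevant).

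The main subtlety is the tensorial covariance of $A[u]$ under rotations; this is a short but careful index calculation using the explicit form $\gamma^{ij}=\delta_{ij}-u_i u_j/(w(1+w))$ and $R_{ki}R_{kj}=\delta_{ij}$. Beyond that, the lemma is a routine consequence of viewing $\mathfrak{L}$ as the linearized operator of the translation- and rotation-invariant fully nonlinear equation.
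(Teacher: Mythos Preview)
Your proposal is correct and is precisely the argument the paper has in mind: the paper gives no detailed proof of Lemma~\ref{C2bl3} but simply refers to \cite{CNS84}, where the identities are obtained exactly as you describe, by linearizing the translation- and rotation-invariant equation $G=-\sigma$ and recognizing $u_i$ and $x_iu_j-x_ju_i$ as the infinitesimal generators of these symmetries. Your verification that $A[u]$ transforms by conjugation under $R\in O(n)$ (so that $F$, $w$, $u$, $u_t$ are all invariant) is the only nontrivial point, and you have handled it correctly.
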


\begin{proof}[Proof of Theorem~\ref{C2bt0}] Consider an arbitrary point on $\partial\Omega,$ which we may assume to be the origin of $\mathbb{R}^n$ and choose the coordinates so that the positive $x_n$ axis is the interior normal to $\partial\Omega$ at the origin. There exists a uniform constant $r>0$ such that $\partial\Omega\cap B_r(0)$ can be represented as a graph
\[x_n=\rho(x')=\frac{1}{2}\sum_{\alpha,\beta <n}B_{\alpha\beta}x_{\alpha}x_{\beta}+O(|x'|^3),\;x'=(x_1,\cdots,x_{n-1}).\]
Since $u\equiv\epsilon,\;\mbox{on $\partial\Omega\times[0,T),$}$ i.e., $u(x',\rho(x'))\equiv\epsilon$ for $\forall t\in[0,T),$
at the origin we have
\[u_{\alpha}+u_nB_{\alpha\beta}x_{\beta}=0,\,\,u_{\alpha\beta}+u_n\rho_{\alpha\beta}=0,\,\,\forall t\in[0,T)\;\mbox{and $\alpha, \beta<n$}.\]
Consequently,
\be\label{C2b25}
|u_{\alpha\beta}(0,t)|\leq C|Du(0,t)|,\;\;\forall t\in[0,T)\;\mbox{and $\alpha,\beta<n,$}
\ee
where $C$ depends only on the maximal (Euclidean principal) curvature of $\partial\Omega.$
Following \cite{CNS84} let $T_{\alpha}=\partial_{\alpha}+\sum_{\beta<n}B_{\alpha\beta}(x_{\beta}\partial_{n}-x_n\partial_{\beta}),$
then for fixed $\alpha<n,$ we have
\be\label{C2b13}
|T_\alpha u|\leq C_1|x|^2,\;\mbox{on $\{\partial\Omega\cap B_{\epsilon}(0)\}\times[0, T),$}
\ee
\be\label{C2b14}
|T_{\alpha}u|\leq C_1,\;\mbox{in $\{\Omega\cap B_{\epsilon}(0)\}\times[0,T),$}
\ee
where $C_1$ is independent of $\epsilon$ and $T.$
Moreover by Lemma \ref{C2bl3}
\be\label{C2b15}
\mathfrak{L}T_{\alpha}u=0.
\ee
Therefore
\be\label{C2b16}
\begin{aligned}
\lll|L(T_{\alpha}u)\rrr|&=\lll|\mathfrak{L}(T_{\alpha}u)-G_uT_\alpha u\rrr|\\
&=|G_uT_\alpha u|\leq C_1|G_u|\\&\leq\frac{C_2}{u}(\sum F^{ii}+F)\\&\leq \frac{C_2}{u}\sum F^{ii}\;\;\mbox{in $\{\Omega\cap B_{\epsilon}(0)\}\times[0, T).$}
\end{aligned}
\ee
Note that the last inequality comes from equation (\ref{Int13}), Corollary \ref{Prec0} and Remark \ref{Prer}. Hence $C_2$ is some constant only depending on $T.$
By equation (\ref{C2b2}), (\ref{C2b24}) and Lemma 2.1 in \cite{GS08}
\be\label{C2b17}
\begin{aligned}
&|L(|x|^2)|=\lll|G^{kl}\partial_k\partial_l(|x|^2)+G^s\partial_s(|x|^2)\rrr|\\
&=|2\sum G^{kk}+2\sum x_s G^s|\\
&\leq C_3(u\sum F^{ii}+\epsilon|G_s|)\leq C_3u\sum F^{ii}\;\;\mbox{in $\{\Omega\cap B_{\epsilon}(0)\}\times[0,T),$}\\
\end{aligned}
\ee
for the same reason as before we know that $C_3$ only depends on $T$ as well.

Now consider function $$\Phi=A\lll(1-\frac{\epsilon}{u}\rrr)+B|x|^2\pm T_\alpha u.$$
First choose $B\geq\frac{C_1}{\epsilon^2},$
then we have $\Phi\geq 0$ on $\{\partial(\Omega\cap B_\epsilon)\}\times[0,T).$

Next consider $\Phi$ on $(\Omega\cap B_{\delta}(0))\times\{0\},$ where $\delta>\epsilon>0$ is small enough. By using Taylor's theorem we have
\[\begin{aligned}
&\Phi=A\lll(1-\frac{\epsilon}{u_0}\rrr)+B|x|^2\pm T_{\alpha}u_0\\
&\geq A\lll(1-\frac{\epsilon}{\epsilon+a_1x_n}\rrr)+B|x|^2-b_1x_n-b_2|x|^2\\
&\geq\lll(\frac{Aa_1}{1+a_1}-b_1\rrr)x_n+\lll(B-b_2\rrr)|x|^2,
\end{aligned}\]
where $u_0\geq\epsilon+ a_1x_n,$ $|T_{\alpha}u_0|\leq b_1x_n+b_2|x|^2$ in $\Omega\cap B_{\delta}(0)$ and $a_1, b_1, b_2>0.$
(The reason of the existence of $a_1$ can be found in section 3 of \cite{LX10} while the existence of $b_i$, $i=1, 2$ is trivial. )
Hence we conclude that when $A\geq\frac{b_1(1+a_1)}{a_1}$ and $B\geq\max\{\frac{C_1}{\epsilon^2}, b_2\},$ $\Phi\geq 0$ on
$\left\{\partial(\Omega\cap B_{\epsilon}(0))\times[0,T)\right\}\cap\left\{(\Omega\cap B_{\epsilon}(0))\times\{0\}\right\}.$

Moreover, by (\ref{Int13}),(\ref{C2b16}),(\ref{C2b16}) and Lemma \ref{C2bl0}
\be\label{C2b18}
\begin{aligned}
&L(\Phi)=AL\lll(1-\frac{\epsilon}{u}\rrr)+BL(|x|^2)\pm L(T_\alpha u)\\
&\geq\frac{A\epsilon(1-\sigma)}{u^2w}-C_3Bu-\frac{C_2}{u}.
\end{aligned}
\ee
Choosing $A\gg\frac{C_1C_3+C_2}{1-\sigma}$ such that $L\Phi\geq 0$ in $\{\Omega\cap B_{\epsilon}\}\times[0, T),$
which implies that $\Phi\geq 0$ in $\{\Omega\cap B_\epsilon\}\times[0,T).$ Since $\Phi(0,t)=0,$ we have $\Phi_n(0,t)\geq 0,$ for any fixed $t\in[0,T).$
Thus
\be\label{C2b19}
A\lll(\frac{\epsilon}{u^2}u_n\rrr)\pm (T_{\alpha}u)_n\geq 0
\ee
which implies, for any fixed $t\in[0,T),$
\be\label{C2b20}
|u_{\alpha n}(0,t)|\leq\frac{Au_n(0,t)}{u(0,t)}
\ee

Since when $t=0,$ $u_{nn}(0,0)$ is given we only care about the case when $t>0.$ By Theorem \ref{Shol0}, we know that $F\equiv\sigma,\;\mbox{on $\partial\Omega\times(0,T)$}.$ Therefore we can establish $|u_{nn}(0,t)|$, $\forall t\in(0,T)$ in the same way as \cite{GSZ09}. For completeness we include the argument here.

For a fixed $t\in(0,T),$ we may assume $\lll(u_{\alpha\beta}(0,t)\rrr)_{1\leq\alpha,\beta<n}$ to be diagonal. Then at the point $(0,t)$
$$A[u]=\frac{1}{w}
\lll[\begin{array}{cccc}
1+uu_{11}&0&\cdots&\frac{uu_{1n}}{w}\\
0&1+uu_{22}&\cdots&\frac{uu_{2n}}{w}\\
\vdots&\vdots&\ddots&\vdots\\
\frac{uu_{n1}}{w}&\frac{uu_{n2}}{w}&\cdots&1+\frac{uu_{nn}}{w^2}\\
\end{array}\rrr] $$

By lemma 1.2 in \cite{CNS85}, if $\epsilon u_{nn}(0,t)$ is very large, the eigenvalues $\lambda_1,\cdots,\lambda_n$
of $A[u]$ are given by
\be\label{C2b21}
\begin{aligned}
&\lambda_\alpha=\frac{1}{w}(1+\epsilon u_{\alpha\alpha}(0,t))+o(1),\,\alpha<n\\
&\lambda_n=\frac{\epsilon u_{nn}(0,t)}{w^3}\lll(1+O\lll(\frac{1}{\epsilon u_{nn}(0,t)}\rrr)\rrr).
\end{aligned}
\ee
If $\epsilon u_{nn}\geq R$ where $R$ is a uniform constant, then by (\ref{Int10}), (\ref{Int11}) and Lemma \ref{Grel0} we have
\[\sigma=\frac{1}{w}F(wA[u])(0,t)\geq(\sigma-C\epsilon)\lll(1+\frac{\epsilon_0}{2}\rrr)>\sigma\]
which is a contradiction. Therefore
\[|u_{nn}(0,t)|\leq\frac{R}{\epsilon}\]
and the proof is completed.
\end{proof}

\section{$C^2$ global estimates}\label{c2g}
Let $\Sigma(t)=\{(x,u(x,t))\mid x\in\Omega, t\in[0,T)\}$ be the flow surfaces in $\mathbb{H}^{n+1}$ where $u(x,t)$ satisfies $u_t=uw\F.$
For a fixed point $\mathbf{x}_0\in\Sigma(t_0),\;0<t_0<T$ we choose a local orthonormal frame $\tau_1,\cdots,\tau_n$ around $\mathbf{x}_0$ such that
$h_{ij}(\mathbf{x}_0)=\kappa_i\delta_{ij},$ where $\kappa_1,\cdots,\kappa_n$ are the hyperbolic principal curvature of $\Sigma(t_0)$ at $\mathbf{x}_0.$ The calculations below are done at $\mathbf{x}_0.$ In this section, for convenience we shall write $v_{ij}=\na_{ij}v,\;h_{ijk}=\na_{k}h_{ij},\;h_{ijkl}=\na_{lk}h_{ij},$ etc.

\begin{theorem}\label{c2gt0}
Let $\Sigma(t)=\{(x,u(x,t))\mid x\in\Omega, t\in[0,T)\}$ be the flow surfaces in $\mathbb{H}^{n+1}$ where $u(x,t)$ satisfies AMGCF equation (\ref{Int3}) and
\[\nu^{n+1}\geq2a>0\;\;\mbox{on $\Sigma(t),\;\forall t\in[0,T)$}.\]
For $\mathbf{x}\in\Sigma(t),$ let $\ka_{\max}(\mathbf{x})$ be the largest principal curvature of $\Sigma(t)$ at $\mathbf{x}.$ Then
\be\label{c2g13}
\max_{\ol{\Omega}_T}\frac{\ka_{\max}}{\nu^{n+1}-a}
\leq\max\left\{\frac{4}{a^3},\max_{\partial\Omega_T}\frac{\ka_{\max}}{\nu^{n+1}-a}\right\},
\ee
where $\Omega_T=\Omega\times[0,T).$
\end{theorem}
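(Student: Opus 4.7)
\emph{Proof proposal.}
The plan is to apply the parabolic maximum principle to the auxiliary function
$$W \;=\; \log \kappa_{\max} \;-\; \log(\nu^{n+1}-a).$$
The pinching hypothesis $\nu^{n+1}\ge 2a$ keeps $\nu^{n+1}-a\ge a>0$ on $\Sigma(t)$, so $W$ is well defined. Suppose $W$ attains its maximum at an interior point $(\mathbf{x}_0,t_0)\in\Omega\times(0,T)$; otherwise the estimate follows from the boundary term directly. After permuting the principal curvatures I may assume $\kappa_{\max}=\kappa_1$ and choose a hyperbolic orthonormal local frame $\tau_1,\ldots,\tau_n$ diagonalizing $h_{ij}$ at $\mathbf{x}_0$, so that $h_{11}$ is a smooth function agreeing with $\kappa_{\max}$ there; I would then work with $W=\log h_{11}-\log(\nu^{n+1}-a)$.

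Writing $\mathcal{P}\phi:=\phi_t-F^{ij}\nabla_{ij}\phi$, the critical-point conditions $\nabla_i W(\mathbf{x}_0,t_0)=0$ give
$$\frac{h_{11,i}}{h_{11}} \;=\; \frac{(\nu^{n+1})_i}{\nu^{n+1}-a},$$
so the quadratic gradient terms produced by the two logarithms cancel identically inside $F^{ij}\nabla_{ij}W$. The parabolic maximum inequality $\mathcal{P}W\ge 0$ then reduces to
\begin{equation}\label{ProposalKey}
0 \;\le\; \frac{\mathcal{P}h_{11}}{h_{11}} \;-\; \frac{\mathcal{P}\nu^{n+1}}{\nu^{n+1}-a}.
\end{equation}

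Next I would compute each operator. Lemma \ref{Evol1} supplies $\dot{\nu}^{n+1}$, and (\ref{Gre13})-(\ref{Gre14}) combined with the conversion (\ref{Foh10}) from tilde to hyperbolic derivatives produces $F^{ij}\nabla_{ij}\nu^{n+1}$; using $\tilde{\kappa}_i=(\kappa_i-\nu^{n+1})/u$ and $\sum f_i\kappa_i=F$, these combine into
$$\mathcal{P}\nu^{n+1} \;=\; \nu^{n+1}\sum f_i\kappa_i^2 \;+\; R_1,$$
where $R_1$ is controlled by $C(\sum f_i+F)$ thanks to the gradient bound from Corollary \ref{Grec0} and the fact that $0<\nu^{n+1}\le 1$. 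For $h_{11}$, Lemma \ref{Evol2} gives $\dot h_{11}$, and the Codazzi identity on a hypersurface in an ambient space of constant sectional curvature $-1$ yields the commutator
$$\nabla_{ii}h_{11} \;=\; \nabla_{11}h_{ii} \;+\; (\kappa_1-\kappa_i)(\kappa_1\kappa_i-1).$$
Expanding $\sum_i f_i(\kappa_1-\kappa_i)(\kappa_1\kappa_i-1)=\kappa_1^2F-\kappa_1\sum f_i-\kappa_1\sum f_i\kappa_i^2+F$ via (\ref{Foh6}), differentiating the flow equation twice in the direction $\tau_1$, and invoking the concavity inequality $F^{ij,kl}h_{ij,1}h_{kl,1}\le 0$, I obtain
$$\mathcal{P}h_{11} \;\le\; h_{11}\sum f_i\kappa_i^2 \;-\; h_{11}^2 F \;+\; R_2,$$
with $R_2$ controlled as $R_1$.

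Substituting into \eqref{ProposalKey}, the coefficient of $\sum f_i\kappa_i^2$ becomes
$$1 \;-\; \frac{\nu^{n+1}}{\nu^{n+1}-a} \;=\; -\frac{a}{\nu^{n+1}-a},$$
strictly negative. After dividing the first term of \eqref{ProposalKey} by $h_{11}$, the inequality takes the schematic form
$$0 \;\le\; -\frac{a}{\nu^{n+1}-a}\sum f_i\kappa_i^2 \;-\; h_{11}F \;+\; \frac{R_2}{h_{11}} \;-\; \frac{R_1}{\nu^{n+1}-a}.$$
Since $F\ge \sigma$ on $\Sigma(t)$ by Lemma \ref{Prel0} and $\sum f_i\ge 1$ by (\ref{Int13}), the two leading negative terms dominate all remainders once $h_{11}/(\nu^{n+1}-a)$ exceeds $4/a^3$, contradicting the assumption of an interior maximum. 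Combined with the trivial boundary case this yields \eqref{c2g13}.

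The main obstacle will be the meticulous bookkeeping behind the stated bounds for $\mathcal{P}\nu^{n+1}$ and $\mathcal{P}h_{11}$: each involves converting between $\tilde\nabla$ and $\nabla$, applying Simons' identity in an ambient space of constant sectional curvature $-1$, and certifying that the error terms $R_1, R_2$ really are absorbed by the negative quadratic in $\kappa_i$ without any hidden $\kappa_{\max}$-growth. A secondary subtlety, handled by the standard perturbation trick, is that $h_{11}$ may fail to be smooth if $\kappa_{\max}$ has multiplicity greater than one, in which case one introduces a small symmetric perturbation of $h$ before differentiating.
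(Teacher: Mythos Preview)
Your overall architecture matches the paper's: study $h_{11}/(\nu^{n+1}-a)$ at an interior maximum, use the commutator formula, and play the $\sum f_i\kappa_i^2$ term coming from $\mathcal{P}\nu^{n+1}$ against the one coming from $\mathcal{P}h_{11}$ to produce the good coefficient $-a/(\nu^{n+1}-a)$. The gap is in the last two displayed lines, where you discard $F^{ij,kl}h_{ij,1}h_{kl,1}$ using only concavity and then assert that the leftovers $R_1,R_2$ are absorbed by $-\tfrac{a}{\nu^{n+1}-a}\sum f_i\kappa_i^2-h_{11}F$.

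Concretely, $\mathcal{P}\nu^{n+1}$ contains the term $\dfrac{2}{u^2}\sum_i f_i\,u_i^2(\kappa_i-\nu^{n+1})$, which after division by $\nu^{n+1}-a$ contributes (with the wrong sign for indices with $\kappa_i<\nu^{n+1}$) a quantity of size $\dfrac{C}{a}\sum f_i$. This is \emph{not} dominated by $\tfrac{a}{\nu^{n+1}-a}\sum f_i\kappa_i^2+h_{11}F$: for curvature functions such as $f=(\prod\kappa_i)^{1/n}$ one has $f_i=f/(n\kappa_i)$, and configurations with one large $\kappa_1$ and several very small $\kappa_i$ (still satisfying $f>\sigma$) make $\sum f_i$ blow up while $\sum f_i\kappa_i^2$ and $\kappa_1F$ stay comparatively small. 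So the remainder bound ``$R_1\le C(\sum f_i+F)$'' is correct but useless here, and the claimed absorption fails. (A similar issue arises with the term $\kappa_1\sum f_i$ hidden in your $R_2$; after dividing by $h_{11}$ it again leaves a bare $\sum f_i$.)

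The paper fixes exactly this point by \emph{not} throwing away the second-order concavity term. It invokes the Andrews--Gerhardt inequality
\[
-F^{ij,kl}h_{ij1}h_{kl1}\;\ge\;2\sum_{i\ge 2}\frac{f_i-f_1}{\kappa_1-\kappa_i}\,h_{i11}^2,
\]
and then uses the critical-point relation $h_{11i}=\dfrac{\kappa_1}{\nu^{n+1}-a}\,\dfrac{u_i}{u}(\nu^{n+1}-\kappa_i)$ (your displayed identity, combined with $(\nu^{n+1})_i=-\tfrac{u_i}{u}(\kappa_i-\nu^{n+1})$ and Codazzi) to rewrite $h_{i11}^2$ in terms of $\dfrac{u_i^2}{u^2}(\kappa_i-\nu^{n+1})^2$. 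This produces an additional \emph{negative} contribution that matches, index by index, the dangerous positive term above. The paper then splits the indices into three sets $I,J,L$ according to the size of $\kappa_i-\nu^{n+1}$ and the ratio $f_i/f_1$, and shows that on each set the bad term is absorbed either by $-\tfrac{a\kappa_1}{\nu^{n+1}-a}\sum f_i(\kappa_i-\nu^{n+1})^2$ (set $I$), by $-\kappa_1^2F$ (set $J$), or by the Andrews--Gerhardt term (set $L$). The specific constant $4/a^3$ comes out of this balancing after choosing the threshold parameter $\theta$ so that $8\theta/(1+\theta)=a^2$.

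In short: keep your setup, but replace ``$F^{ij,kl}h_{ij,1}h_{kl,1}\le 0$'' with the Andrews--Gerhardt inequality and feed the critical-point identity for $h_{11i}$ back into it; without that refinement the remainder terms are genuinely not controlled.
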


Since the proof of this Theorem is very complicated, we shall divide it into several parts.

To begin with, we denote
\be\label{c2g14}
M_0=\max_{\ol{\Omega}_T}\frac{\ka_{\max}(x)}{\nu^{n+1}-a}.
\ee
Without loss of generality we may assume $M_0>0$ is attained at an interior point $\mathbf{x}_0\in\Sigma(t_0),\;t_0\in(0,T).$ We may also assume $\kappa_1=\kappa_{\max}(\mathbf{x}_0).$
Thus we say at $\mathbf{x}_0,$ $\frac{h_{11}}{\nu^{n+1}-a}$ achieves its local maximum. Hence,
\be\label{c2g15}
\frac{h_{11i}}{h_{11}}-\frac{\na_i\nu^{n+1}}{\nu^{n+1}-a}=0,
\ee
\be\label{c2g16}
\frac{h_{11ii}}{h_{11}}-\frac{\na_{ii}\nu^{n+1}}{\nu^{n+1}-a}\leq 0.
\ee

\begin{lemma}\label{c2gl0}
At $\mathbf{x}_0\in\Sigma(t_0),$ $t_0\in(0,T),$
\be\label{c2g17}
\begin{aligned}
\frac{\partial}{\partial t} h_{11}&=\na_{11}F-\F\kappa_1^2+\kappa_1\nu^{n+1}\F\\&-\frac{\kappa_1}{\nu^{n+1}}\F+\F(\nu^{n+1})^2-2\F.
\end{aligned}
\ee
\end{lemma}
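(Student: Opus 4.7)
The plan is to derive (\ref{c2g17}) directly from the general evolution identity (\ref{Evo13}) of Lemma~\ref{Evol2} by specializing it to $\mathbf{x}_0$ with the chosen orthonormal frame, and then converting the Euclidean second derivatives of $\F u$ into the hyperbolic Hessian $\na_{11}F$ plus purely local terms in $\ka_1$ and $\nu^{n+1}$. The key ingredients are the pointwise identities
\[
g_{ij}=\delta_{ij},\quad \tgij=u^{2}\delta_{ij},\quad w=\tfrac{1}{\nu^{n+1}},\quad \thij=u(\ka_i-\nu^{n+1})\delta_{ij},\quad \thh^{k}_{i}=\tilde{\ka}_{i}\delta_{ki},
\]
together with (\ref{Foh10})--(\ref{Foh13}).

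First I would treat the last four terms on the right-hand side of (\ref{Evo13}). Substituting the identities above, using (\ref{Foh12}) to rewrite $\tg^{kl}u_k u_l=1-(\nu^{n+1})^{2}$, and splitting $[u\F]_k u_l=u_k u_l\F+u u_l F_k$, these four terms collapse to a sum of explicit $\ka_1$/$\nu^{n+1}$ expressions plus a single ``bad'' gradient contribution $-\tfrac{1}{u}\sum_k u_k F_k$ produced by the $\tg^{kl}[u\F]_k u_l$ piece.

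Next I would handle the remaining term $\tfrac{1}{u}[\F u]_{11}$. Expanding the Euclidean Hessian as $\tna_{11}(\F u)=u\tna_{11}F+2F_1u_1+\F\,\tna_{11}u$, using (\ref{Foh13}) to replace $\tna_{11}u=\thh_{11}\nu^{n+1}=u(\ka_1-\nu^{n+1})\nu^{n+1}$, and invoking (\ref{Foh10}) with $v=F$ to rewrite
\[
\tna_{11}F=\na_{11}F-\tfrac{2}{u}u_1F_1+\tfrac{1}{u}\sum_k u_kF_k,
\]
I obtain $\tfrac{1}{u}[\F u]_{11}=\na_{11}F+\tfrac{1}{u}\sum_k u_kF_k+\F(\ka_1-\nu^{n+1})\nu^{n+1}$. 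The $\tfrac{1}{u}\sum_k u_kF_k$ term here cancels exactly with the bad gradient term from the previous step.

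Finally, collecting all remaining contributions, expanding $(\ka_1-\nu^{n+1})^{2}$, $(\ka_1-\nu^{n+1})\nu^{n+1}$ and $(\ka_1-\nu^{n+1})/\nu^{n+1}$, and regrouping by powers of $\ka_1$ and $\nu^{n+1}$, one checks that the coefficient of $\ka_1^{2}\F$ is $-1$, of $\ka_1\nu^{n+1}\F$ is $+1$, of $(\ka_1/\nu^{n+1})\F$ is $-1$, of $(\nu^{n+1})^{2}\F$ is $+1$, and the constant multiple of $\F$ is $-2$, which is precisely (\ref{c2g17}). The only delicate part of the argument is tracking the two distinct occurrences of $\tfrac{1}{u}\sum_k u_kF_k$ so that their cancellation is manifest; everything else is routine bookkeeping from Section~\ref{Foh} and Lemma~\ref{Evol2}, and no use of the maximum-point conditions (\ref{c2g15})--(\ref{c2g16}) is needed here.
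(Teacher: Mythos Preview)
Your proposal is correct and follows essentially the same route as the paper: both start from (\ref{Evo13}), specialize to the orthonormal frame at $\mathbf{x}_0$, convert the Euclidean second derivatives to the hyperbolic Hessian $\na_{11}F$ via (\ref{Foh10}), and observe the cancellation of the gradient term $\tfrac{1}{u}\sum_k u_kF_k$. The only cosmetic difference is that the paper applies (\ref{Foh10}) to the product $(F-\sigma)u$ and then expands $\na_{11}[(F-\sigma)u]$ (passing through $\na_{11}u$), whereas you first expand $\tna_{11}[(F-\sigma)u]$ by the product rule and apply (\ref{Foh10}) to $F$ alone; the bookkeeping and final collection of terms are identical.
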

\begin{proof}
By Lemma \ref{Evol2} equation (\ref{Evo13}) and $\tg_{ij}=u^2\delta_{ij}$ we have,
\be\label{c2g0}
\begin{aligned}
&\frac{\partial}{\partial t}h_{11}=\frac{1}{u}\{\tna_{11}[\F u]-u\F\thh^k_1\thh_{k1}\}-\frac{\thh_{11}}{u}w\F\\
&-[u \F]_ku^k-\frac{2 \F \nu^{n+1}}{u}\thh_{11}-2\F.
\end{aligned}
\ee
Recall equation (\ref{Foh10}) we get
\begin{align*}
&\tna_{11}[\F u]=\na_{11}[\F u]-\frac{1}{u}\lll\{2u_1[\F u]_1-u^k[\F u]_k u^2\rrr\}\\
&=u\na_{11}F+\F\na_{11}u+2F_1u_1-\frac{2}{u}\lll\{uu_1F_1+u_1^2\F\rrr\}+uu^k[\F u]_k\\
&=u\na_{11}F+\F\na_{11}u-\frac{2u_1^2\F}{u}+u^k[\F u]_k u,
\end{align*}
inserting this into (\ref{c2g0})
\be\label{c2g1}
\begin{aligned}
&\frac{\partial}{\partial t}h_{11}=\frac{1}{u}\lll\{u\na_{11}F+\F\na_{11}u-\frac{2u_1^2\F}{u}+u^k[\F u]_ku\rrr\}\\
&-\F\thh^k_1\thh_{k1}-\frac{\thh_{11}}{u}w\F-[u\F]_ku^k\\&-\frac{2\F\nu^{n+1}}{u}\thh_{11}-2\F\\
&=\na_{11}F+\frac{\F}{u}\na_{11}u-\frac{2u_1^2\F}{u^2}-\F\thh^k_1\thh_{k1}\\
&-\frac{\thh_{11}}{u}w\F-\frac{2\F\nu^{n+1}}{u}\thh_{11}-2\F.
\end{aligned}
\ee
Note that,
\[\na_{11}u=\tna_{11}u+\frac{2u_1^2}{u}-u|\tna u|^2,\]
\[\frac{\thh_{11}}{u}=h_{11}-\nu^{n+1},\]
\[\thh^k_1\thh_{k1}=\frac{1}{u^2}\thh_{1k}^2=\frac{1}{u^2}(uh_{1k}-u\nu^{n+1}\delta_{1k})^2=(h_{11}-\nu^{n+1})^2.\]
So we have,
\be\label{c2g2}
\begin{aligned}
&\frac{\partial}{\partial t} h_{11}=\na_{11}F+\frac{\F}{u}\lll(\thh_{11}\nu^{n+1}+\frac{2u_1^2}{u}-u|\tna u|^2\rrr)\\
&-\frac{2u_1^2}{u^2}\F-\F(h_{11}-\nu^{n+1})^2-(h_{11}-\nu^{n+1})w\F\\
&-2\F\nu^{n+1}(h_{11}-\nu^{n+1})-2\F\\
&=\na_{11}F+\F\nu^{n+1}(h_{11}-\nu^{n+1})-\F(1-(\nu^{n+1})^2)\\
&-\F(h_{11}^2-2h_{11}\nu^{n+1}+(\nu^{n+1})^2)-(h_{11}-\nu^{n+1})w\F \\
&-2\F\nu^{n+1}(h_{11}-\nu^{n+1})-2\F\\
&=\na_{11}F-\F-\F\kappa_1^2+2\kappa_1\nu^{n+1}\F\\
&-\frac{\kappa_1}{\nu^{n+1}}\F+\F-\F\nu^{n+1}(\kappa_1-\nu^{n+1})-2\F\\
&=\na_{11}F-\F\kappa_1^2+\kappa_1\nu^{n+1}\F-\frac{\kappa_1}{\nu^{n+1}}\F\\
&+\F(\nu^{n+1})^2-2\F.
\end{aligned}
\ee
\end{proof}

\begin{proof}[proof of Theorem \ref{c2gt0}] Now we denote $\varphi=\frac{h_{11}}{\nu^{n+1}-a},$ where $\nu^{n+1}\geq 2a>0$ on $\ol{\Omega}_T$ Then
at $\mathbf{x}_0\in\Sigma(t_0)$, we have
\be\label{c2g3}
\na_i\varphi=\frac{h_{11i}}{\nu^{n+1}-a}-\frac{h_{11}\nu^{n+1}_i}{(\nu^{n+1}-a)^2}=0
\ee
\be\label{c2g4}
\na_{ii}\varphi=\frac{h_{11ii}}{\nu^{n+1}-a}-\frac{h_{11}\na_{ii}\nu^{n+1}}{(\nu^{n+1}-a)^2}\leq 0.
\ee
Using Lemma \ref{c2gl0} and equation (\ref{Evo10}) in Lemma \ref{Evol1} we get
\be\label{c2g5}
\begin{aligned}
&\frac{\partial}{\partial t}\varphi=\frac{\dot{h}_{11}}{\nu^{n+1}-a}-\frac{h_{11}\dot{\nu}^{n+1}}{(\nu^{n+1}-a)^2}\\
&=\frac{1}{\nu^{n+1}-a}\left\{F^{ii}h_{ii11}+F^{ij,rs}h_{ij1}h_{rs1}-\F\kappa_1^2\right.\\
&\left.+\kappa_1\nu^{n+1}\F-\frac{\kappa_1}{\nu^{n+1}}\F+\F(\nu^{n+1})^2-2\F\right\}\\
&+\frac{h_{11}}{(\nu^{n+1}-a)^2}u^k[\F u]_k.
\end{aligned}
\ee
By equation (\ref{Foh10}) and (\ref{Gre14})
\begin{align*}
&\na_{ii}\nu^{n+1}=\tna_{ii}\nu^{n+1}+\frac{1}{u}\lll(2u_i\nu^{n+1}_i-u^k\nu^{n+1}_k\tg_{ii}\rrr)\\
&=-\tg^{kl}\lll(\nu^{n+1}\thh_{il}\thh_{ki}+u_l\tna_k\thh_{ii}\rrr)+\frac{2}{u}u_i\nu^{n+1}_i-uu^k\nu^{n+1}_k\delta_{ii},
\end{align*}
we obtain
\begin{align*}
&F^{ii}\na_{ii}\nu^{n+1}=-\nu^{n+1}F^{ii}\tg^{kl}\thh_{il}\thh_{ki}-F^{ii}u^k\tna_k\thh_{ii}\\
&+\frac{2}{u}F^{ii}u_i\nu^{n+1}_i-uu^k\nu^{n+1}_k\sum f_i\\
&=-\nu^{n+1}\lll(\sum f_i\kappa^2_i-2\nu^{n+1}F+(\nu^{n+1})^2\sum f_i\rrr)-F^{ii}u^k\tna_k\thh_{ii}\\
&+\frac{2}{u}F^{ii}u_i\nu^{n+1}_i-uu^k\nu^{n+1}_k\sum f_i.
\end{align*}
What's more, by the Codazzi and Gauss equations we have
\[h_{ii11}-h_{11ii}=(\kappa_i\kappa_1-1)(\kappa_i-\kappa_1)=\ka^2_i\ka_1-\ka_i\ka_1^2-\ka_i+\ka_1,\]
multiplying by $F^{ii}$ and sum over $i,$
\be\label{c2g22}
\sum F^{ii}(h_{ii11}-h_{11ii})=\ka_1\sum f_i\ka_i^2-\ka_1^2F-F+\ka_1\sum f_i.
\ee
Finally we get
\be\label{c2g6}
\begin{aligned}
&\frac{\partial}{\partial t}\varphi-F^{ii}\na_{ii}\varphi=\frac{1}{\nu^{n+1}-a}\left\{\ka_1\sum f_i\ka^2_i-\ka_1^2F-F\right.\\
&+\ka_1\sum f_i+F^{ij,rs}h_{ij1}h_{rs1}-\F\ka_1^2+\ka_1\nu^{n+1}\F\\
&\left.-\frac{\ka_1}{\nu^{n+1}}\F+\F(\nu^{n+1})^2-2\F\right\}\\
&+\frac{\ka_1}{(\nu^{n+1}-a)^2}\lll\{uF_ku^k+|\tna u|^2\F+F^{ii}\na_{ii}\nu^{n+1}\rrr\}\\
&=\frac{1}{\nu^{n+1}-a}\left\{\ka_1\sum f_i\ka^2_i-\ka_1^2F-F+\ka_1\sum f_i\right.\\
&+F^{ij,rs}h_{ij1}h_{rs1}-\F\ka_1^2+\ka_1\nu^{n+1}\F\\
&\left.-\frac{\ka_1}{\nu^{n+1}}\F+\F(\nu^{n+1})^2-2\F\right\}\\
&+\frac{\ka_1}{(\nu^{n+1}-a)^2}\left\{u\lll(\frac{|\tna u|^2}{u}F-\frac{|\tna u|^2}{u}\nu^{n+1}\sum f_i\rrr)\right.\\
&+|\tna u|^2\F-\nu^{n+1}\lll[\sum f_i\ka_i^2-2\nu^{n+1}F+(\nu^{n+1})^2\sum f_i\rrr]\\
&\left.+\frac{2}{u}\sum F^{ii}u_i\nu_{i}^{n+1}\right\},
\end{aligned}
\ee
where we have used equation (\ref{Gre17}).
Hence at $\mathbf{x}_0\in\Sigma(t_0)$ we have
\be\label{c2g7}
\begin{aligned}
&0\leq\ka_1f_i\ka_i^2-\ka_1^2F-F+\ka_1\sum f_i+F^{ij,rs}h_{ij1}h_{rs1}\\
&-\F\ka_1^2+\ka_1\nu^{n+1}\F-\frac{\ka_1}{\nu^{n+1}}\F\\
&+\F(\nu^{n+1})^2-2\F+\frac{\ka_1}{\nu^{n+1}-a}\left\{|\tna u|^2F\right.\\
&-|\tna u|^2\nu^{n+1}\sum f_i+|\tna u|^2\F-\nu^{n+1}\left[\sum f_i\ka_i^2-2\nu^{n+1}F\right.\\
&\left.\left.+(\nu^{n+1})^2\sum f_i\right]+\frac{2}{u}\sum F^{ii}u_i\nu_i^{n+1}\right\},
\end{aligned}
\ee
which implies
\be\label{c2g8}
\begin{aligned}
&0\leq\lll(-1-\ka_1^2+\ka_1\frac{1+(\nu^{n+1})^2}{\nu^{n+1}-a}\rrr)F+F^{ij,rs}h_{ij1}h_{rs1}\\
&+\lll(\ka_1-\frac{\ka_1\nu^{n+1}}{\nu^{n+1}-a}\rrr)(\sum f_i+\sum f_i\ka_i^2)
+\frac{2\ka_1}{\nu^{n+1}-a}\sum f_i\frac{u_i^2}{u^2}(\nu^{n+1}-\ka_i)\\
&+\F\ka_1\lll(-\ka_1+\nu^{n+1}-\frac{1}{\nu^{n+1}}+\frac{1-(\nu^{n+1})^2}{\nu^{n+1}-a}\rrr)-\F.\\
\end{aligned}
\ee
Next we use an inequality due to Andrews \cite{A94} and Gerhardt \cite{G96} which states
\be\label{c2g18}
-F^{ij,kl}h_{ij1}h_{kl1}\geq 2\sum_{i\geq2}\frac{f_i-f_1}{\kappa_1-\kappa_i}h^2_{i11}.
\ee
Meanwhile at $\mathbf{x}_0\in\Sigma(t_0),$ we obtain from equation (\ref{Gre13}) and (\ref{c2g3})
\be\label{c2g19}
h_{11i}=\frac{\kappa_1}{\nu^{n+1}-a}\frac{u_i}{u}(\nu^{n+1}-\kappa_i).
\ee
Inserting into (\ref{c2g18}) we derive
\be\label{c2g20}
F^{ij,rs}h_{ij1}h_{rs1}\leq 2\left(\frac{\ka_1}{\nu^{n+1}-a}\right)^2
\sum_{i\geq 2}\frac{f_1-f_i}{\ka_1-\ka_i}\frac{u_i^2}{u^2}(k_i-\nu^{n+1})^2.
\ee
Moreover we may write
\be\label{c2g21}
\sum f_i+\sum f_i\ka_i^2=(1-(\nu^{n+1})^2)\sum f_i+\sum(\ka_i-\nu^{n+1})^2f_i+2F\nu^{n+1}.
\ee
Combining equation (\ref{c2g8}), (\ref{c2g20}) and (\ref{c2g21}) gives
\be\label{c2g9}
\begin{aligned}
&0\leq\lll(-1-\ka_1^2+\frac{1+(\nu^{n+1})^2}{\nu^{n+1}-a}\ka_1\rrr)F+\frac{2\ka_1}{\nu^{n+1}-a}\sum f_i\frac{u_i^2}{u^2}(\nu^{n+1}-\ka_i)\\
&-\frac{a\ka_1}{\nu^{n+1}-a}\lll((1-(\nu^{n+1})^2)\sum f_i+\sum(\ka_i-\nu^{n+1})^2f_i+2F\nu^{n+1}\rrr)\\
&-2\lll(\frac{\ka_1}{\nu^{n+1}-a}\rrr)^2\sum_{i\geq 2}\frac{f_i-f_1}{\ka_1-\ka_i}\frac{u_i^2}{u^2}(\ka_i-\nu^{n+1})^2\\
&+\F\ka_1\lll(-\ka_1+\nu^{n+1}-\frac{1}{\nu^{n+1}}+\frac{1-(\nu^{n+1})^2}{\nu^{n+1}-a}\rrr)-\F.
\end{aligned}
\ee
Note that (assuming $\ka_1\geq\frac{2}{a}$) all terms on the right hand side are negative except possibly the ones in the sum involving $(\nu^{n+1}-\ka_i)$ and only if $\ka_i<\nu^{n+1}.$

Therefore define
\[I=\left\{i:\ka_i-\nu^{n+1}\leq-\theta\ka_1\right\},\]
\[J=\left\{i:-\theta\ka_1<\ka_i-\nu^{n+1}<0,\,f_i<\theta^{-1}f_1\right\},\]
\[L=\left\{i:-\theta\ka_1<\ka_i-\nu^{n+1}<0,\,f_i\geq\theta^{-1}f_1\right\},\]
where $\theta\in(0,1)$ is to be chosen later. We get
\be\label{c2g10}
\begin{aligned}
&\frac{-1}{\nu^{n+1}-a}\sum_{i\in I}(\ka_i-\nu^{n+1})^2f_i\\
&\leq\frac{\theta\ka_1}{\nu^{n+1}-a}\sum_{i\in I}(\ka_i-\nu^{n+1})f_i\\
&\leq\frac{\theta\ka_1}{\nu^{n+1}-a}\sum_{i\in I}(\ka_i-\nu^{n+1})f_i\frac{u_i^2}{u^2},\\
\end{aligned}
\ee
\be\label{c2g11}
\sum_{i\in J}(\nu^{n+1}-\ka_i)f_i\frac{u_i^2}{u^2}\leq \ka_1f_1.
\ee
Finally
\be\label{c2g12}
\begin{aligned}
&\frac{-2\ka_1^2}{(\nu^{n+1}-a)^2}\sum_{i\in L}\frac{f_i-f_1}{\ka_1-\ka_i}\frac{u_i^2}{u^2}(\ka_i-\nu^{n+1})^2\\
&\leq\frac{-2\ka_1^2}{(\nu^{n+1}-a)^2}\sum_{i\in L}\frac{(1-\theta)f_i}{(1+\theta)\ka_1}(\ka_i-\nu^{n+1})^2\frac{u_i^2}{u^2}\\
&=\frac{2\ka_1}{\nu^{n+1}-a}\sum_{i\in L}f_i\frac{u_i^2}{u^2}(\ka_i-\nu^{n+1})\\
&+\frac{4\theta}{1+\theta}\frac{\ka_1}{(\nu^{n+1}-a)^2}\sum_{i\in L}(\ka_i-\nu^{n+1})^2f_i\frac{u_i^2}{u^2}\\
&-\frac{2\ka_1}{(\nu^{n+1}-a)^2}\sum_{i\in L}f_i\frac{u_i^2}{u^2}(\ka_i^2-(\nu^{n+1}+a)\ka_i+a\nu^{n+1})\\
&\leq\frac{2\ka_1}{\nu^{n+1}-a}\sum_{i\in L}f_i\frac{u_i^2}{u^2}(\ka_i-\nu^{n+1})\\
&+\frac{4\theta}{1+\theta}\frac{\ka_1}{(\nu^{n+1}-a)^2}\sum_{i\in L}(\ka_i-\nu^{n+1})^2f_i\frac{u_i^2}{u^2}+\frac{6\ka_1}{a}F.
\end{aligned}
\ee
In deriving the last inequality in (\ref{c2g12}) we have used that $\kappa_i>0$ for each $i.$
Now fix $\theta$ so that $\frac{8\theta}{1+\theta}=a^2,$
so we get the right hand side of (\ref{c2g9}) is strictly negative when provided $\ka_1>\frac{4}{a^2}$ which complete the proof.
\end{proof}

Let us assume that the flow exists in $[0,T)$ with $0<T<\infty$ such that the norm of $u^2(t), \forall t\in[0,T)$ is uniformly bounded in
$C^{2}(\Omega).$ Due to the concavity of $F,$ we can apply  the Evans-Krylov theorem \cite{CC95} to get uniform
$C^{2+\alpha}(\Omega)$ estimates which in turn will lead to $C^{2+\alpha,\frac{2+\alpha}{2}}(\Omega\times(0,T))$ estimates. And the long time existence follows by proving a priori estimates in any compact time interval for the corresponding norms.

In order to prove equation (\ref{Int16}) in Theorem \ref{Intt1}, according to Theorem \ref{c2gt0} , we only need to find a uniform bound $C$ which is independent of $T$ for $u|D^2u|$ on the boundary $\partial\Omega\times[0,\infty)$.

Following Lemma 3.4 in [LX10], we obtain that, for any fixed $x\in\ol\Omega_\epsilon:=\{x\in\ol\Omega, d(x,\partial\Omega)\leq\epsilon\},$
\[u(x,t)-u(x,0)\leq\int_0^\infty uw\F dt=u(x,t^*)\int_0^\infty w\F dt\leq C\epsilon,\]
which implies that,
\[\int_0^\infty w\F dt\leq C\;\mbox{in $\ol\Omega_{\epsilon}$}.\]
Therefore, by Lemma \ref{Prel0} and Corollary \ref{Grec0} we conclude that when $0<\epsilon\leq\epsilon_0,$ there exists a $\tilde{t}$ such that for any $t>\tilde{t},$ we have $0\leq F-\sigma<\delta\;\;\mbox{in $\ol\Omega_\epsilon$},$ where $\tilde{t}$ only depends on $\delta.$ Combining with Theorem \ref{C2bt0} and Theorem \ref{c2gt0} gives a uniform bound for $u|D^2u|.$

\section{Convergence to a stationary solution} \label{Con}
Let us go back to our original problem (\ref{Int2}), which is a scalar parabolic differential equation defined on the cylinder $\Omega_T=\Omega\times[0,T)$ with initial value $u(0)=u_0\in C^{\infty}(\Omega)\cap C^2(\ol\Omega)$ and $u_0|_{\partial\Omega}=0.$ In view of the a priori estimates, which we have estimated in the preceding sections, we know that
\be\label{Con0}
u|D^2u|\leq C,
\ee
\be\label{Con5}
\sqrt{1+|Du|^2}\leq C,
\ee
and hence
\be\label{Con1}
\mbox{$F$ is uniformly elliptic in $u$}.
\ee
Moreover, since $F$ is concave, we have uniform $C^{2+\alpha}(\Omega)$ estimates for $u^2(t),\;\forall t\geq 0.$ Thus the flow exists for all $t\in[0,\infty).$

By integrating equation (\ref{Int1}) with respect to $t$, we get
\be\label{Con2}
u(x,t^*)-u(x,0)=\int_0^{t^*} \F uw dt.
\ee
In particular,
\be\label{Con3}
\int_0^{\infty}\F uw dt<\infty\;\;\mbox{$\forall x\in\Omega.$}
\ee
Hence for any $x\in\Omega$ there exists a sequence $t_k\rightarrow\infty$ such that $\F u(x,t_k)\rightarrow 0.$

On the other hand, $u(x,\cdot)$ is monotone increasing and bounded (see Lemma 3.3 of \cite{LX10}). Therefore
\be\label{Con4}
\lim_{t\rightarrow\infty}u(x,t)=\tilde{u}(x)
\ee
exists, and is of class $C^{\infty}(\Omega)\cap C^1(\ol\Omega).$ Moreover, $\tilde{u}(x)$ is a stationary solution of our problem, i.e.,
$F\lll(\tilde{\Sigma}\rrr)=\sigma,$ where $\tilde{\Sigma}=\left\{(x,\tilde{u}(x))\mid x\in\Omega\right\}.$

\section{Uniqueness and foliation} \label{Unf}
\begin{theorem}\label{Unft0}
Suppose f satisfies (\ref{Int5})-(\ref{Int11}), in addition,
\be\label{Unf1}
\sum_if_i>\sum_i\lambda_i^2f_i\;\;\mbox{in $K\cap\{0<f<1\}$}.
\ee
Let $\Sigma_i=\{(x,u_i(x)\mid x\in\Omega\},\;i=1,2,$ be two graphs such that
\be\label{Unf0}
\sup_{x\in\Omega}f(\kappa[\Sigma_1])<f(\kappa[\Sigma_2]),
\ee
where $\Sigma_i\; i=1,2$ are strictly locally convex graphs (oriented up) in $\mathbb{H}^{n+1}$ over $\Omega\subset\mathbb{R}^n$ with the same boundary $\Gamma_\epsilon$ in the horosphere $P_{\epsilon}=\{x_{n+1}=\epsilon\}$ or with the same asymptotic boundary $\Gamma=\partial\Omega.$
Then there holds
\be\label{Unf2}
u_1>u_2, \mbox{in $\Omega$}.
\ee
\end{theorem}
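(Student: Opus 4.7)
The plan is to proceed by contradiction using a comparison principle adapted to the concavity of $F$. Put $\varphi = u_2 - u_1$; the conclusion $u_1 > u_2$ fails iff $\varphi$ is positive somewhere in $\Omega$. In the horospherical case $\varphi \equiv 0$ on $\partial\Omega$, while in the asymptotic case one exploits Lemma \ref{Grel0} (the gradient limit $\nu^{n+1} \to \sigma$ at $\partial\Omega$ for \emph{both} surfaces) together with a barrier argument of the form $1 - \epsilon/u$ (cf.\ Lemma \ref{C2bl0}) to reduce to the situation in which $\varphi$ attains a positive interior maximum at some $x_0 \in \Omega$.

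At such an $x_0$ one has $Du_1(x_0) = Du_2(x_0)$ (so $w$ and $\gamma^{ij}$ agree for the two surfaces) and $D^2 u_1(x_0) \geq D^2 u_2(x_0)$. Starting from $A_{ij}[u] = w^{-1}(\delta_{ij} + u\gamma^{ik} u_{kl}\gamma^{lj})$ and the identity $u\gamma D^2 u\,\gamma = wA[u] - I$, a straightforward computation splits
\[
A[u_2] - A[u_1] \;=\; \frac{u_2}{w}\,\gamma(D^2 u_2 - D^2 u_1)\gamma \;+\; \frac{\varphi(x_0)}{u_1(x_0)}\bigl(A[u_1] - \nu^{n+1} I\bigr).
\]
Applying the concavity inequality $F_2 - F_1 \leq F^{ij}(A[u_1])(A[u_2] - A[u_1])_{ij}$, using positivity of $F^{ij}$ together with $D^2(u_2 - u_1)(x_0) \leq 0$, and invoking the degree-one homogeneity identity $F^{ij}(A)A_{ij} = F$, yields
\[
0 < F_2 - F_1 \;\leq\; \frac{\varphi(x_0)}{u_1(x_0)}\Bigl(F_1 - \nu^{n+1}\sum_i f_i(\kappa[u_1])\Bigr).
\]
A contradiction then follows provided one establishes the pointwise estimate $wF \leq \sum_i f_i$ on each admissible surface, since the right-hand side would be non-positive while the left-hand side is strictly positive.

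This last estimate is the main technical obstacle, and it is exactly where hypothesis (\ref{Unf1}) is used. Cauchy--Schwarz applied to $F = \sum f_i \kappa_i$ together with (\ref{Unf1}) gives only $F < \sum_i f_i$, and the extra factor $w = 1/\nu^{n+1} \geq 1$ must be squeezed out from hyperbolic geometry: one exploits the relation $\kappa_i = u\tilde\kappa_i + \nu^{n+1}$ between the hyperbolic and Euclidean principal curvatures together with the concavity and degree-one homogeneity of $f$ on the cone $K \cap \{0 < f < 1\}$ in which the admissible surfaces lie (the range $f < 1$ being inherited from comparison with the horosphere, which has $f \equiv 1$). Once the pointwise inequality is in hand, the strict contradiction above shows $\varphi \leq 0$ in $\Omega$; the strong maximum principle then rules out $\varphi(x_0) = 0$ at an interior point (since $L\varphi = F_2 - F_1 > 0 \not\equiv 0$), upgrading this to the strict inequality $u_1 > u_2$ asserted in the theorem.
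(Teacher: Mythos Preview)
Your computation up to the inequality
\[
0 < F_2 - F_1 \;\leq\; \frac{\varphi(x_0)}{u_1(x_0)}\Bigl(F_1 - \nu^{n+1}\sum_i f_i(\kappa[u_1])\Bigr)
\]
is clean and correct. The gap is precisely the step you flag as the ``main technical obstacle'': the pointwise inequality $wF\le\sum_i f_i$, equivalently $F-\nu^{n+1}\sum_i f_i\le 0$, does \emph{not} follow from the hypotheses. Using $\kappa_i=u\tilde\kappa_i+\nu^{n+1}$ and homogeneity one has the exact identity
\[
F-\nu^{n+1}\sum_i f_i \;=\; u\sum_i f_i\,\tilde\kappa_i,
\]
so the sign you need is $\sum_i f_i\tilde\kappa_i\le 0$, i.e.\ a sign condition on the \emph{Euclidean} principal curvatures. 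Nothing in the assumptions forces this: a small perturbation of a tilted Euclidean plane (which has $\tilde\kappa_i\equiv 0$, $F=\nu^{n+1}<1$) with positive Hessian at a point produces a hyperbolically convex graph with $f<1$ and $\sum_i f_i\tilde\kappa_i>0$ there. Cauchy--Schwarz with (\ref{Unf1}) gives only $F<\sum_i f_i$, and the extra factor $w$ genuinely cannot be recovered. Equivalently, the elliptic operator $u\mapsto F(A[u])$ is \emph{not} proper in $u$ (its $u$-derivative, cf.\ (\ref{C2b4}), has no sign), so a maximum principle for the Euclidean height difference $u_2-u_1$ cannot work directly.

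The paper's argument avoids this by replacing the Euclidean vertical comparison with a \emph{hyperbolic} one: it looks at equidistant hypersurfaces of $\Sigma_1$ in $\mathbb{H}^{n+1}$ and uses the Riccati equation $\kappa_i'(s)=\kappa_i^2-1$ along normal geodesics. Hypothesis (\ref{Unf1}) is tailored to this equation, since it says exactly that
\[
\frac{d}{ds}f(\kappa)=\sum_i f_i\kappa_i^2-\sum_i f_i<0 \quad\text{in }K\cap\{0<f<1\},
\]
so $f$ strictly decreases as one pushes $\Sigma_1$ outward. At the point where an equidistant hypersurface of $\Sigma_1$ touches $\Sigma_2$ from below, the usual touching comparison gives $f(\kappa[\Sigma_2])\le f(\kappa[\Sigma_1])$ at the corresponding points, contradicting (\ref{Unf0}). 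In short, (\ref{Unf1}) encodes monotonicity of $f$ under the hyperbolic normal flow, not an algebraic bound of the form $wF\le\sum f_i$; your Euclidean maximum-of-$\varphi$ approach does not see this structure.
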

\begin{proof}
We first observe that the weaker conclusion
\be\label{Unf3}
u_1\geq u_2
\ee
is as good as the strict inequality (\ref{Unf2}), in view of the maximum principle.

Hence prove by contradiction, assume (\ref{Unf3}) is not valid, in another word,
\be\label{Unf4}
E(u_2)=\{x\in\Omega:u_2(x)>u_1(x)\}\neq\emptyset.
\ee
Then there exists point $p_i\in\Sigma_i$ such that
\[0<d_0=d\lll(\Sigma_1,\Sigma_2\rrr)=d(p_1,p_2)=\sup_{p\in\Sigma_1}\{\inf_{q\in\Sigma_2\cap I^+(\Sigma_1)} d(p,q):(p,q)\in\Sigma_1\times\Sigma_2\},\]
where d is the distance function in $\mathbb{R}^{n+1}$, and $I^+(\Sigma_1)=\{(x,x_{n+1}):x^{n+1}\geq u_1(x)\}.$

Let $\chi$ be the maximal geodesic from $\Sigma_1$ to $\Sigma_2$ realizing this distance with end point $p_1$ and $p_2,$ and parametrized by arc length. Denote by $\bar{d}$ the distance function to $\Sigma_1,$
\[\bar{d}(q)=\inf_{p\in\Sigma_1}d(p,q).\]

Since $\chi$ is maximal, $\Upsilon=\{\chi(t):0\leq t<d_0\}$ contains no focal points of $\Sigma_1,$ hence there exists an open neighborhood $\mathfrak{U}=\mathfrak{U}(\Upsilon)$ such that $\bar{d}$ is smooth in $\mathfrak{U},$ and $\mathfrak{U}$ is a tubular neighborhood of $\Sigma_1,$ and hence covered by an associated normal Gaussian coordinates system $(x^{\alpha})$ satisfying $x^{n+1}=\bar{d}$ in $\{x^{n+1}>0\}.$

Now $\Sigma_1$ is the level set $\{\bar{d}=0,\}$ and the level set
\[\Sigma(s)=\{x\in\mathfrak{U}:\bar{d}=s\}\]
are smooth hypersurfaces.
Since the principle curvatures of $\Sigma(t)$ at points along the normal geodesic emanating from any point of $\Sigma_2$ (say near $p_2$) are given by ode
\[\ka'_i(s)=\ka_i^2-1.\]
hence by (\ref{Unf1}) we have
\be\label{Unf5}
\frac{d}{ds}f(\ka)(s)=\sum\ka^2_if_i-\sum f_i<0\;\;\mbox{in $K\cap\{0<f<1\}.$}
\ee

Next, in the same way, we consider a tubular neighborhood $\mathfrak{N}$ of $\Sigma_2$ with corresponding normal Gaussian coordinates $(x^\alpha).$ The lever sets
\[\tilde{\Sigma}(r)=\{x^{n+1}=r\},\;\;\mbox{$-\epsilon<r<0$,}\]
lies below $\Sigma_2=\tilde{\Sigma}(0)$ and are smooth for small $\epsilon.$

Since the geodesic $\chi$ is perpendicular to $\Sigma_2,$ it's also perpendicular to $\tilde{\Sigma}(r)$ and the length of the geodesic segment of $\chi$ is $-r.$ Hence we deduce
\[d\lll(\Sigma_1,\tilde{\Sigma}(r)\rrr)=d_0+r.\]
Further more, for fixed r, the hypersurface $\tilde{\Sigma}(r)$ touches $\Sigma(d_0+r)$ at $p_r=\chi(d_0+r)$ from below. The maximum principle then implies
\[f|_{\tilde{\Sigma}(r)}(p_r)\leq f|_{\Sigma(d_0+r)}(p_r)\]

On the other hand, $\tilde{\Sigma}_2(r)$ converges to $\Sigma_2$
It follows from (\ref{Unf5}) that
\[f(\kappa[\Sigma_2])(\chi(d_0))\leq f(\kappa[\Sigma_1])(\chi(0)).\]
It's a contradiction to (\ref{Unf0}).
\end{proof}

\section*{Acknowledgement}
The author would like to thank Professor Joel Spruck for his guidance and support.

\bigskip

\end{document}